\begin{document}
\theoremstyle{plain}
\newtheorem{thm}{Theorem}[section]
\newtheorem{theorem}[thm]{Theorem}
\newtheorem*{theorem2}{Theorem}
\newtheorem{lemma}[thm]{Lemma}
\newtheorem{corollary}[thm]{Corollary}
\newtheorem{corollary*}[thm]{Corollary*}
\newtheorem{proposition}[thm]{Proposition}
\newtheorem{proposition*}[thm]{Proposition*}
\newtheorem{conjecture}[thm]{Conjecture}
%%%%%%%%%%%%%%%%%%%% Text roman %%%%%%%%%%%%%%%%%%%%%%%%%%%%%
\theoremstyle{definition}
\newtheorem{construction}[thm]{Construction}
\newtheorem{notations}[thm]{Notations}
\newtheorem{question}[thm]{Question}
\newtheorem{problem}[thm]{Problem}
\newtheorem{remark}[thm]{Remark}
\newtheorem{remarks}[thm]{Remarks}
\newtheorem{definition}[thm]{Definition}
\newtheorem{claim}[thm]{Claim}
\newtheorem{assumption}[thm]{Assumption}
\newtheorem{assumptions}[thm]{Assumptions}
\newtheorem{properties}[thm]{Properties}
\newtheorem{example}[thm]{Example}
\newtheorem{comments}[thm]{Comments}
\newtheorem{blank}[thm]{}
\newtheorem{observation}[thm]{Observation}
\newtheorem{defn-thm}[thm]{Definition-Theorem}

\newcommand{\sM}{{\mathcal M}}

%%%%%%%%%%%%%%%%%%%%%%%%%%%%%%%%%%%%%%%%%%%%%%%%%%%%%%%%%%%%%%

\title[A closed formula for the asymptotic expansion of the Bergman kernel]{A closed formula for the asymptotic expansion\\ of the Bergman
kernel}

\author{Hao Xu}
        \address{Department of Mathematics, Harvard University, Cambridge, MA 02138, USA}
        \email{haoxu@math.harvard.edu}

        \begin{abstract}
        We prove a graph theoretic closed formula for coefficients in the Tian-Yau-Zelditch asymptotic expansion of the Bergman
        kernel. The formula is expressed in terms of the characteristic polynomial of the directed graphs representing Weyl
        invariants. The proof relies on a combinatorial interpretation of
        a recursive formula due to M. Engli\v s and A. Loi.
        \end{abstract}
    \maketitle

\section{Introduction}

Let $M$ be a projective algebraic manifold in $\mathbb CP^N$,
$N\geq\dim_{\mathbb C}M=n$. The hyperplane line bundle of $\mathbb
CP^N$ restricts to an ample line bundle $L$ on $M$, which is called
a polarization on $M$. A K\"ahler metric $g$ is called a polarized
metric,  if the corresponding K\"ahler form represents the first
Chern class $c_1(L)$ of $L$ in $H^2(M,\mathbb Z)$. Given any
polarized K\"ahler metric $g$, there is a Hermitian metric $h_L$ on
$L$ whose Ricci form is equal to
$$\omega_g=\frac{\sqrt{-1}}{2\pi}\sum_{i,j=1}^n g_{i\overline{j}}dz_i\wedge dz_{\overline{j}}.$$

Let $\mathcal E$ be a holomorphic vector bundle on $M$ of rank $r$
with a Hermitian metric $h_{\mathcal E}$. Then for any holomorphic
sections $U_1, U_2\in H^0(M,\mathcal E(m))$ ($\mathcal
E(m):=\mathcal E \otimes L^m$), we have the pointwise metric
$\langle U_1(x), U_2(x)\rangle_{h_{\mathcal E}\otimes h_L^m}$ and
the $L^2$-metric
\begin{equation}
(U_1,U_2)=\int_M\langle U_1(x),U_2(x)\rangle_{h_{\mathcal E}\otimes
h_L^m}\frac{\omega_g^n}{n!}.
\end{equation}

Let $S_1,\dots,S_d$ be an orthonormal basis of $H^0(M,\mathcal
E\otimes L^m)$ in the $L^2$-metric. The {\it Bergman kernel} is
defined to be the following:
\begin{equation}
B_m(x):=\sum_{j=1}^d \langle\cdot, S_j(x)\rangle S_j(x) \in {\rm
End}(\mathcal E\otimes L^m).
\end{equation}
Note that $B_m(x)$ is independent of the choice of orthonormal
basis. In particular, when $\mathcal E=\mathbb C$,
\begin{equation}
B_m(x)=\sum_{j=1}^{d}\|S_{j}(z)\|^{2}_{h_{L}^m}.
\end{equation}

The following asymptotic expansion was first proved by Zelditch
\cite{Zel} and Catlin \cite{Cat}, motivated by the convergence of
Bergman metrics started in the paper of Tian \cite{Tia} (cf. also
\cite {Bou, Rua}) following a suggestion of Yau \cite{Yau}.
\begin{theorem}{\rm\bf (Zelditch, Catlin)} \label{tyz} With the notation above, there is an asymptotic
expansion when $m\rightarrow\infty$:
\begin{equation} \label{eqb10}
B_m(x)\sim a_{0}(x)m^{n}+a_{1}(x)m^{n-1}+a_{2}(x)m^{n-2}+\cdots
\end{equation} where $a_{k}(x)\in {\rm End}(\mathcal
E\otimes L^m)$. More precisely,
$$||B_m(x)-\sum_{j=0}^k a_j(x)m^{n-j}||_{C^\mu}\leq C_{k,\mu}m^{n-k-1},$$
where $C_{k,\mu}$ depends on $k,\mu$ and the manifold $M$.
\end{theorem}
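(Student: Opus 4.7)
The plan is to reduce the Bergman kernel asymptotics on $M$ to the analysis of the Szegő kernel on the unit circle bundle of $L^*$, and then to extract the expansion via the Boutet de Monvel--Sjöstrand microlocal parametrix combined with a complex stationary phase argument. First I would set $X = \{v \in L^* : \|v\|_{h_L^*} = 1\}$, which is a smooth strictly pseudoconvex $S^1$-bundle over $M$ because $(L, h_L)$ has positive curvature $\omega_g$. Pulling $\mathcal{E}$ back to $X$, the Hardy space $H^2(X,\mathcal{E})$ of CR-holomorphic sections decomposes under the circle action as $\bigoplus_{m\geq 0} H^0(M, \mathcal{E}\otimes L^m)$. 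Under this identification, the individual Bergman kernels $B_m$ assemble into the single Szegő projector $\Pi : L^2(X,\mathcal{E}) \to H^2(X,\mathcal{E})$, and $B_m(x)$ is recovered by taking the $m$-th Fourier coefficient of the restriction of $\Pi$ to the fibre above $x$.

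Next I would invoke the Boutet de Monvel--Sjöstrand theorem, which represents $\Pi$ as a Fourier integral operator with complex phase:
\begin{equation*}
\Pi(u, v) \;=\; \int_0^\infty e^{it\psi(u,v)} s(u,v,t)\, dt \;+\; R(u,v),
\end{equation*}
where $\psi$ is an almost-analytic extension off the diagonal of the defining function of $X$, with $\operatorname{Im}\psi \geq 0$ and $\psi$ vanishing on the diagonal; $R$ is smooth; and the amplitude admits a classical expansion $s(u,v,t) \sim \sum_{k \geq 0} t^{n-k} s_k(u,v)$ with $s_0$ nowhere zero on the diagonal. Extracting the $m$-th Fourier mode and restricting to the diagonal converts $B_m(x)$ into an oscillatory integral in $t$ together with the fibre angular variable, with $m$ playing the role of a large parameter. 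Applying the complex stationary phase lemma of Melin--Sjöstrand at the unique nondegenerate critical point in the fibre then yields the formal expansion $B_m(x) \sim \sum_{j\geq 0} a_j(x) m^{n-j}$, with the leading term computed explicitly from $s_0$.

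The main obstacle is the careful justification of the complex stationary phase step: one must identify the critical manifold of $\psi$ after complexification, verify transversal nondegeneracy of the Hessian, and show that successive Taylor terms of the amplitude produce smooth endomorphism-valued coefficients $a_j(x)$ while controlling the $C^\mu$-norms of the remainders uniformly in $m$. The contribution of the smooth kernel $R$ is $O(m^{-\infty})$ by repeated integration by parts in the fibre variable, so it is absorbed into the error. The auxiliary bundle $\mathcal{E}$ enters only as a tensor factor in the amplitude, valued in $\operatorname{End}(\mathcal{E})$, and does not alter the microlocal structure of the argument; Catlin's independent approach via peak-section constructions can be carried out in parallel to cross-check the coefficients.
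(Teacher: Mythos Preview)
Your sketch is essentially Zelditch's original argument: lift to the unit circle bundle $X\subset L^*$, identify $H^0(M,\mathcal E\otimes L^m)$ with the $m$-th Fourier mode of the Hardy space, invoke the Boutet de Monvel--Sj\"ostrand parametrix for the Szeg\H{o} projector, and read off the expansion of $B_m(x)$ by complex stationary phase in the fibre variable. The outline is correct, and the points you flag as delicate (nondegeneracy of the complex Hessian, uniform $C^\mu$ control of remainders, the $O(m^{-\infty})$ contribution of the smooth remainder $R$) are exactly the places where the real work lies. The presence of the auxiliary bundle $\mathcal E$ indeed only decorates the amplitude and causes no new difficulty.

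However, there is nothing to compare against here: the paper does \emph{not} prove Theorem~\ref{tyz}. It is quoted as a known result due to Zelditch \cite{Zel} and Catlin \cite{Cat}, and the paper's own contribution begins afterwards, with the analysis of the coefficients $a_k$ via the Engli\v{s}--Loi recursion and the graph-theoretic closed formula. So your proposal is a reasonable summary of the Zelditch proof in the literature, but it is not an alternative to anything in the present paper. If you want to engage with the paper's actual arguments, the relevant statements are Theorem~\ref{main} and the supporting Propositions~\ref{alltyz}, \ref{sink}, and \ref{strong}.
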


In the case of $\mathcal E=\mathbb C$, Lu \cite{Lu} computed
$a_{k}(x)$ for $k\leq3$ by using the peak section method \cite{Tia}
and proved that each $a_{k}(x)$ is a polynomial of the curvature and
its covariant derivatives. The above theorem and Lu's computation of
$a_1(x)$ have played a crucial role in Donaldson's breakthrough work
\cite{Don}. The theorem was generalized to symplectic manifolds for
Boutet de Monvel-Guillemin's almost holomorphic sections by Shiffman
and Zelditch \cite{SZ}, and Borthwick and Uribe \cite{BU}. Dai, Liu
and Ma \cite{DLM} established the full off-diagonal expansion of the
Bergman kernel on orbifolds and symplectic manifolds for the
Spin$^c$ Dirac operator by using the heat kernel method. See also
\cite{Son, RT}.

There are alternative derivations of these $a_k$ by Engli\v{s}
\cite{Eng} and Loi \cite{Loi} using the Laplace integral, by Douglas
and Klevtsov using path integral \cite{DK} when $\mathcal E=\mathbb
C$. For general $\mathcal E$, X. Wang \cite{Wan} computed $a_1$ for
the first time; L. Wang \cite{WanL} computed $a_1,a_2$; Ma and
Marinescu \cite{MM3, MM, MM4} computed $a_1,a_2$ in the symplectic
case and presented a recursive method to compute coefficients of the
expansion of more general Bergman kernels as well as the kernel of
Toeplitz operators; Berman, Berndtsson and Sj\"ostrand \cite{BBS}
gave a proof of Theorem \ref{tyz} by microlocal analysis and showed
a recursive algorithm for computing higher order terms; Liu and Lu
\cite{LL} gave a proof of Theorem \ref{tyz} using the complex
geometric method and uncovered new geometric information about the
expansion. The excellent monograph by Ma and Marinescu \cite{MM2}
contains a comprehensive introduction to the asymptotic expansion of
the Bergman kernel and its applications.

In other contexts, the asymptotic expansion of the Bergman kernel
also plays an important role in the Berezin quantization on K\"ahler
manifolds \cite{Ber}. We will discuss the Berezin transform briefly
in Section \ref{berezin}. It was applied to define the Berezin
$\star$-product on K\"ahler manifolds. Reshetikhin and Takhtajan
\cite{RTa} obtained a formula of the Berezin $\star$-product in
terms of partition functions of Feynman graphs. More explicit
graph-theoretic formulae of the Berezin and Berezin-Toeplitz $\star$-products were obtained
in \cite{Xu, Xu2}. We remark that Kontsevich's celebrated formula
\cite{Kon} for a $\star$-product on Poisson manifolds was also
written as a summation over graphs.

In order to state the main results in this paper, we assume
$\mathcal E=\mathbb C$ and introduce some terminologies in graph
theory.

A {\it digraph} (directed graph) $G=(V,E)$ is defined to be a finite
set $V$ (whose elements are called vertices) and a multiset $E$ of
ordered pairs of vertices, called directed edges. Throughout the
paper, we allow a digraph to have loops and multi-edges. The
adjacency matrix $A=A(G)$ of a digraph $G$ with $n$ vertices is a
square matrix of order $n$ whose entry $A_{ij}$ is the number of
directed edges from vertex $i$ to vertex $j$. The {\it outdegree}
$\deg^+(v)$ and {\it indgree} $\deg^-(v)$ of a vertex $v$ are
defined to be the number of outward and inward edges at $v$
respectively.

A digraph $G$ is called {\it connected} if the underlying undirected
graph is connected, and {\it strongly connected} if there is a
directed path from each vertex in $G$ to every other vertex.

We call a directed graph $G=(V,E)$ {\it stable} if at each vertex
$v$ both the outdegree  and indegree  are no less than $2$. The set
of stable graphs will be denoted by $\mathcal G$. The {\it weight}
of $G$ is defined to be $|E|-|V|$. We will define a natural function
$z(G)$ on $\mathcal G$ such that the coefficients $a_k$ can be
written as a sum over $\mathcal G(k)$, the set of stable graphs of
weight $k$ (see Section \ref{secgraph} for details)
\begin{equation} \label{eqc}
a_k(x)=\sum_{G\in\mathcal G(k)} z(G)\cdot G, \quad z(G)\in\mathbb Q.
\end{equation}
So we may regard $z$ as a map from the set $\mathcal G$ of all
stable graphs to $\mathbb Q$, from which we can easily recover the
curvature-tensor expression of these $a_k$'s (see Example
\ref{tyza2} where $a_1$ and $a_2$ are computed, and Appendix
\ref{apthree} where $a_3$ is computed).

The main result of this paper is the following theorem.
\begin{theorem} \label{main} Let $G=(V,E)\in\mathcal G$ be a stable graph with the adjacency matrix $A$.
\begin{enumerate}
\item[i)] If $G$ is  a disjoint
union of connected subgraphs $G=G_1\cup\dots\cup G_n$, then we have
\begin{equation}
z(G)=\prod_{j=1}^n z(G_j)/|Sym(G_1,\dots,G_n)|,
\end{equation}
where $Sym(G_1,\dots,G_n)$ denote the permutation group of these $n$
connected subgraphs.

\item[ii)] If $G$ is connected but not strongly connected, then
\begin{equation}
 z(G) = 0.
\end{equation}

\item[iii)] If $G$ is strongly
connected, then
\begin{equation}
 z(G) = -\frac{\det(A-I)}{|{\rm Aut}(G)|},
\end{equation}
where $I$ is the identity matrix.
\end{enumerate}

\end{theorem}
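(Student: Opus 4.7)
My plan is to analyze the Engli\v{s}--Loi recursion for the coefficients $a_k$ and translate it into a graph-theoretic recursion on the weight function $z\colon \mathcal G\to\mathbb Q$. In K\"ahler normal coordinates at a point, the Bergman kernel admits a Laplace integral representation whose asymptotic expansion produces a sum over Feynman-type graphs: each vertex records a Taylor coefficient of the K\"ahler potential, and each directed edge records a Wick contraction of a holomorphic index with an antiholomorphic one, so that the outdegree and indegree of a vertex count the number of $\partial$- and $\bar\partial$-indices respectively. The stability condition $\deg^\pm(v)\geq 2$ reflects the vanishing of the low-order derivatives of the potential at the base point. Once this combinatorial dictionary is in place, each of the three assertions of the theorem becomes a statement about the graphical recursion.

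Part (i), the factorization over disjoint unions, is a standard feature of Feynman expansions: the Laplace integral, and therefore its asymptotic coefficients, decomposes multiplicatively over the connected components of the underlying graph because the Wick contractions respect the partition of vertices. The only subtlety is that summing over labelled graphs overcounts isomorphic labellings by exactly $|Sym(G_1,\dots,G_n)|$, producing the stated quotient.

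Part (ii), the vanishing $z(G)=0$ when $G$ is connected but not strongly connected, I expect to rest on a cancellation inside the recursion. Since the condensation of $G$ into strongly connected components is nontrivially acyclic, one can select a proper, downward-closed subset $S\subset V$; every edge of $G$ between $S$ and $V\setminus S$ is then oriented into $S$. I would construct a sign-reversing involution on the terms of the Engli\v{s}--Loi expansion that exchanges configurations differing only in the treatment of such a cut-edge, pairing them with opposite signs so that their contributions cancel. An alternative route, if the recursion permits, is to show that the associated Weyl invariant reduces to a total derivative in normal coordinates and hence vanishes after integration against the Gaussian.

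Part (iii) is the principal technical obstacle. The target identity
\begin{equation*}
z(G) = -\frac{\det(A-I)}{|{\rm Aut}(G)|}
\end{equation*}
is approached by expanding the determinant as a signed sum over permutations of $V$; each such term corresponds to a spanning linear subdigraph of $G$, i.e.\ a vertex-disjoint union of directed cycles covering a subset of vertices with the remaining vertices left as fixed points, the sign recording cycle parities. I would proceed by induction on the weight $|E|-|V|$: parts (i) and (ii) eliminate all disconnected and all not-strongly-connected intermediates produced by the recursion, so the recursive expansion for strongly connected $G$ reduces to a finite combinatorial identity. The hardest step, which I expect to occupy the bulk of the proof, is aligning the alternating signs, the $|{\rm Aut}(G)|$ normalization, and the exact coefficients of the recursion so that after induction they assemble into $-\det(A-I)$. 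A clean organization is likely to first prove an intermediate formula expressing $z(G)$ as a signed sum over spanning linear subdigraphs, and then to recognize that sum as the permutation expansion of the desired determinant.
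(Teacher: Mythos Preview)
Your outline captures the right general shape---Loi's recursion, graph interpretation, linear subgraphs, determinant---but the logical dependencies and the concrete cancellation mechanism differ from the paper in ways that matter.

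For part (iii) you propose induction on the weight, invoking (i) and (ii) to kill intermediates. The paper's argument is simpler and does \emph{not} use (i) or (ii) at all. The point is structural: in a $G$-configuration $\mathcal Z=(L,G_1,G_2)$, the subgraph $G_1$ must be a source and $G_2$ a sink inside $G$. A strongly connected graph has no proper nonempty source or sink subgraph, so every $G$-configuration is forced to have $G_1=G_2=\emptyset$. Thus only the single term $-R_k(1)$ in Loi's formula contributes, and one obtains directly
\[
z(G)=\sum_{L\in\mathcal L/\sim}\frac{(-1)^{|V|+1+p(L)}}{|\mathrm{Aut}_G(L)|}.
\]
An orbit--stabilizer count over $\mathrm{Aut}(G)$ converts this into a sum over \emph{all} linear subgraphs with denominator $|\mathrm{Aut}(G)|$, and then the classical Coefficient Theorem from spectral graph theory (the permutation expansion of $\det(\lambda I-A)$) identifies the alternating sum with $\det(I-A)$. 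No induction is needed, and the hard work you anticipate aligning signs and coefficients is absorbed by citing this known theorem.

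For part (ii) your proposed sign-reversing involution on the treatment of a single cut-edge is too coarse, and the alternative ``total derivative'' route is off target: $z(G)$ is a rational number, not an integrand. The paper's mechanism is this: pick a sink component $C\subset G$. Every $G$-configuration splits as a $G'$-configuration $\mathcal Z'=(L',G_1',G_2')$ on the complement together with a $C$-configuration $\mathcal Z''=(L'',\emptyset,\emptyset)$. The cancellation is between the single configuration $(L',G_1',C)$ (where the whole sink is placed as $G_2$) and the family of configurations where $C$'s vertices are instead distributed into $L''$; the weighting by $z(C)$ in the former exactly matches the alternating sum over $L''$ in the latter because $C$ is strongly connected and (iii) has already computed $z(C)$. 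The equality of the two sides is an orbit--stabilizer identity for the action of $\mathrm{Aut}(\mathcal Z'')$ on the placements of the edge-heads landing in $C$. Induction on the weight is used only to dispose of the case $G_2'\neq\emptyset$. So the actual dependency is (iii)$\Rightarrow$(ii), the reverse of what your plan assumes.
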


We computed $z(G)$ for $G\in\mathcal G(k),\, k\leq4$ using Loi's
recursive formula \eqref{eqloi}. They match with values computed by
the above theorem (see the appendix).

As pointed out by the referee, it would be interesting to see the
implication of the above theorem on the relation between the Bergman
kernel on K\"ahler manifolds and the path integral (cf. \cite{DK}).
We hope our work will find application in Fefferman's program of
studying the Bergman kernel of a strong pseudoconvex domain as an
analogy of the heat kernel of Riemannian manifolds (cf. \cite{Ale,
Eng, Fef, Hir, Xu}).

We also obtained some interesting combinatorial identities, for
example we proved the following formula for Bernoulli numbers $B_k$
in corollary \ref{bern}.
\begin{equation} \label{eqbern}
B_k=(-1)^{k} k\sum_{G\in\mathcal G(k)}\frac{\det(A-I)}{|{\rm
Aut}(G)|}\cdot\epsilon(G)\prod_{v\in V}(\deg^+(v)-1)! ,\quad k\geq1,
\end{equation}
where $\epsilon(G)$ is the number of Euler tours in $G$. Note that
in the right-hand side, $\epsilon(G)=0$ unless $G$ is connected and
balanced, and hence strongly connected. A digraph is called {\it balanced} if
$\deg^+(v)=\deg^-(v)$ for each vertex $v$.

\

\noindent{\bf Acknowledgements} The author is grateful to Professor
Kefeng Liu for his kind help over many years. The author benefited a
lot from Professor Shing-Tung Yau's seminars. The author thanks
Professor Xiaonan Ma for helpful comments. The author thanks the
referee for very helpful suggestions which greatly improved the
presentation of the paper.

 \vskip 30pt

 \section{Tensor calculus on K\"ahler manifolds} \label{sectensor}
 Let $(M,g)$ be a K\"ahler manifold of dimension
$n$. Locally the K\"ahler form is given by
$$\omega_g=\frac{\sqrt{-1}}{2\pi}\sum_{i,j=1}^n
g_{i\overline{j}}dz_i\wedge dz_{\overline{j}}.$$

We will use the Einstein summation convention. The indices
$i,j,k,\dots$ run from $1$ to $n$, while Greek indices
$\alpha,\beta,\gamma$ may represent either $i$ or $\bar i$. Let
$\det g$ be the determinant of the Hermitian matrix $(g_{i\bar j})$
and $(g^{i\bar j})$ be the inverse of the matrix $(g_{i\bar j})$. We
also use the notation
\begin{equation}
g_{j\bar
k\alpha_1\alpha_2\dots\alpha_m}:=\partial_{\alpha_1\alpha_2\dots\alpha_m}g_{j\bar
k}.
\end{equation}

The curvature tensor is defined as
\begin{equation}\label{eqcur1}
R_{i\bar jk\bar l} =-g_{i\bar j k\bar l}+g^{m\bar p}g_{m\bar j\bar
l}g_{i\bar p k}.
\end{equation}
The Ricci tensor is
\begin{equation}\label{eqcur2}
R_{i\bar j}=g^{k\bar l}R_{i\bar jk\bar l}=-\partial_i\partial_{\bar
j}\log(\det g)
\end{equation}
and the scalar curvature is the trace of the Ricci curvature
\begin{equation}\label{eqcur3}
\rho=g^{i\bar j}R_{i\bar j}.
\end{equation}

The covariant derivative of a tensor field
$T_{\beta_1\dots\beta_q}^{\alpha_1\dots\alpha_p}$ is defined by
\begin{equation}\label{eqcur4}
T_{\beta_1\dots\beta_q;\gamma}^{\alpha_1\dots\alpha_p}=\partial_{\gamma}T_{\beta_1\dots\beta_q}^{\alpha_1\dots\alpha_p}-\sum_{i=1}^q
\Gamma_{\gamma\beta_i}^{\delta}T_{\beta_1\dots\beta_{i-1}\delta\beta_{i+1}\dots\beta_q}^{\alpha_1\dots\alpha_p}
+\sum_{j=1}^p\Gamma_{\delta\gamma}^{\alpha_j}T_{\beta_1\dots\beta_q}^{\alpha_1\dots\alpha_{j-1}\delta\alpha_{j+1}\dots\alpha_p},
\end{equation}
where the Christoffel symbols $\Gamma_{\beta\gamma}^\alpha=0$ except
for
\begin{equation}
\Gamma_{jk}^i=g^{i\bar l}g_{j\bar l k},\quad \Gamma_{\bar j\bar
k}^{\bar i}=g^{l\bar i}g_{l\bar j\bar k}.
\end{equation}

\begin{lemma} For tensors in K\"ahler geometry, we have the following identities:
\begin{enumerate}
\item[i)] The K\"ahler metric $g$ satisify
\begin{equation} \label{eqcur6}
\partial_i g_{j\bar k}=\partial_j g_{i\bar k},\quad \partial_{\bar
l}g_{j\bar k}=\partial_{\bar k} g_{j\bar l}.
\end{equation}
\item[ii)] The derivative of $g^{m\bar l}$ satisfy
\begin{equation}\label{eqcur7}
\partial_{\alpha}g^{m\bar l}=-g^{p\bar l}g^{m\bar q}g_{p\bar
q\alpha}.
\end{equation}
\item[iii)] The derivative of $\det g$ satisfy
\begin{equation}\label{eqdet}
\partial_{\alpha}\det g=\det g\cdot g^{m\bar l}g_{m\bar
l\alpha}.
\end{equation}
\item[iv)] The curvature tensor satisfy the first Bianchi identity
\begin{equation}
R_{i\bar jk\bar l}=R_{i\bar l k\bar j}=R_{k\bar j i\bar l}.
\end{equation}
\item[v)] The covariant derivatives of the curvature tensor
satisfy the second Bianchi identity
\begin{equation}
R_{i\bar j k\bar l;m}=R_{m\bar j k\bar l;i}=R_{i\bar j m\bar
l;k},\quad R_{i\bar j k\bar l;\bar p}=R_{i\bar p k\bar l;\bar
j}=R_{i\bar j k\bar p;\bar l}.
\end{equation}
\item[vi)] The Ricci formula gives the difference when we
interchange two covariant derivative indices
\begin{equation}\label{eqcur5}
T_{\beta_1\dots\beta_q;i\bar
j}^{\alpha_1\dots\alpha_p}-T_{\beta_1\dots\beta_q;\bar j
i}^{\alpha_1\dots\alpha_p}=\sum_{k=1}^q R_{\beta_k i\bar
j}^{\gamma}T_{\beta_1\dots\beta_{k-1}\gamma\beta_{k+1}\dots\beta_q}^{\alpha_1\dots\alpha_p}
-\sum_{l=1}^p R_{\gamma i\bar
j}^{\alpha_l}T_{\beta_1\dots\beta_q}^{\alpha_1\dots\alpha_{l-1}\gamma\alpha_{l+1}\dots\alpha_p},
\end{equation}
where $R_{\bar l i\bar j}^{\bar k}= -g^{m\bar k}R_{m\bar l i\bar j}$,
$R_{l i\bar j}^{k}= g^{k\bar m}R_{l\bar m i\bar j}$ and $R_{\bar l
i\bar j}^{k}= R_{l i\bar j}^{\bar k}=0$.
\end{enumerate}
\end{lemma}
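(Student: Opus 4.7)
The plan is to verify the six identities one at a time; each is a direct consequence of the K\"ahler structure together with the definitions \eqref{eqcur1}--\eqref{eqcur4} already written down, so no new machinery is needed. Part (i) is essentially a repackaging of $d\omega_g=0$: extracting the $(2,1)$-component of $d\omega_g$ gives $\partial_i g_{j\bar k}=\partial_j g_{i\bar k}$, and the $(1,2)$-component gives the conjugate identity. Part (ii) follows by differentiating $g^{m\bar l}g_{p\bar l}=\delta^m_p$ with respect to $z_\alpha$ or $\bar z_\alpha$ and then contracting once more against $g^{\cdot\bar\cdot}$ to isolate $\partial_\alpha g^{m\bar l}$. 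Part (iii) is Jacobi's formula $\partial_\alpha\log\det g=\mathrm{tr}(g^{-1}\partial_\alpha g)$ written out in the current indices.

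For part (iv), I would substitute $R_{i\bar j k\bar l}=-g_{i\bar j k\bar l}+g^{m\bar p}g_{m\bar j\bar l}g_{i\bar p k}$ and check the two symmetries directly. The first term is symmetric under $\bar j\leftrightarrow\bar l$ and $i\leftrightarrow k$ because partial derivatives commute and one may use (i) to exchange indices of matching type. For the second term, the factor $g_{m\bar j\bar l}$ is symmetric in $\bar j,\bar l$ after writing it as $\partial_{\bar l}g_{m\bar j}=\partial_{\bar j}g_{m\bar l}$ via (i), and similarly for $g_{i\bar p k}$; the two Bianchi symmetries then drop out. The second Bianchi identity (v) comes from computing $R_{i\bar j k\bar l;m}$ via \eqref{eqcur4}, expanding the Christoffel corrections using (ii), and commuting holomorphic partial derivatives in the leading $-g_{i\bar j k\bar l m}$ term using (i); the antiholomorphic statement is the complex conjugate.

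Part (vi) is the most index-heavy step. Writing
\[
T_{\cdots;i\bar j}=\partial_{\bar j}(T_{\cdots;i})-\sum_k\Gamma^{\bar\delta}_{\bar j\beta_k}T_{\cdots\bar\delta\cdots;i}+\sum_j\Gamma^{\alpha_j}_{\bar\delta\bar j}T\cdots
\]
and symmetrically for $T_{\cdots;\bar j i}$, and then subtracting, all pieces not involving $\partial_{\bar j}\Gamma^\cdot_{i\cdot}$ or $\partial_i\Gamma^{\bar\cdot}_{\bar j\bar\cdot}$ cancel because the mixed Christoffel symbols vanish. By (ii) these surviving derivatives of $\Gamma$ are precisely the curvature components $R^{\gamma}_{\ \beta_k i\bar j}$ and $R^{\alpha_l}_{\ \gamma i\bar j}$ contracted into the remaining tensor slots, giving exactly \eqref{eqcur5}.

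The main obstacle is not any individual calculation but the bookkeeping of indices in (v) and (vi): with three or four index pairs of mixed holomorphic/antiholomorphic type floating around, it is very easy to commute the wrong partials or drop a sign. My order of attack is therefore to nail down (i)--(iii) first and then use them freely as rewriting rules in (iv)--(vi), which localizes every manipulation to a short, verifiable identity.
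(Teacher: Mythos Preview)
Your outline is correct and follows the standard textbook verification of these identities. The paper, however, does not prove this lemma at all: it is stated as a list of well-known facts in K\"ahler geometry and left without proof, so there is no ``paper's own proof'' to compare against. Your plan is exactly how one would fill in the details if asked, and the only minor slip is in the displayed line for (vi), where the Christoffel symbol $\Gamma^{\alpha_j}_{\bar\delta\bar j}$ is ill-typed (for a holomorphic upper index the two lower indices must both be holomorphic); but your surrounding prose makes clear you understand that only $\partial_{\bar j}\Gamma^{\delta}_{i\beta_k}$ and $\partial_i\Gamma^{\bar\delta}_{\bar j\bar\beta_k}$ survive, which is the right mechanism.
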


Recall that around each point $x$ on a K\"ahler manifold, there
exists a normal coordinate such that
\begin{equation} \label{eqnormal}
g_{i\bar j}(x)=\delta_{ij}, \qquad g_{i\bar j k_1\dots
k_r}(x)=g_{i\bar j \bar l_1\dots\bar l_r}(x)=0
\end{equation}
for all $r\leq N\in \mathbb N$, where $N$ can be chosen arbitrary
large.

\begin{remark} \label{rm1}
Fix a normal coordinate around $x$ on a K\"ahler manifold. By
\eqref{eqcur1} and \eqref{eqcur4}, it is not difficult to see
inductively that any covariant derivative $R_{i\bar jk \bar l;
\alpha_1\dots \alpha_r}$ is canonically equal to a polynomial of
$g_{a\bar b \alpha}$ and their partial derivatives. By
\eqref{eqnormal}, any monomial containing $g_{i\bar j k_1\dots k_r}$
or $g_{i\bar j \bar l_1\dots\bar l_r}$ vanishes at $x$. For example
$$R_{i\bar jk \bar l}(x)=-g_{i\bar jk \bar l}(x),$$
$$R_{i\bar jk \bar l; p\bar q}(x)=-g_{i\bar jk \bar l p\bar q}(x)
+g^{s\bar t}(g_{p\bar j s\bar l}g_{i\bar q k\bar t}+g_{k\bar j s\bar
l}g_{p\bar q i\bar t}+g_{i\bar j s\bar l}g_{k\bar q p\bar t})(x),$$
$$R_{i\bar jk \bar l; p_1\dots p_r}(x)=-g_{i\bar jk \bar l
p_1\dots p_r}(x),\quad \forall\, r\geq 1,$$
$$R_{i\bar jk \bar l; \bar p_1\dots \bar
p_r}(x)=-g_{i\bar jk \bar l \bar p_1\dots \bar p_r}(x),\quad
\forall\, r\geq 1.$$

Note that these identities hold only at the point $x$. However they
can easily recover the original identities (hold in a neighborhood
of $x$), since from \eqref{eqcur1} and \eqref{eqcur4}, we see that
$g_{i\bar j k\bar l \beta_1\dots\beta_{r}}$ can be inductively
expressed as a canonical polynomial of covariant derivatives of
curvature tensors, denoted by $D(g_{i\bar j k\bar l
\beta_1\dots\beta_{r}})$. For example
$$D(g_{i\bar j k\bar l})=-R_{i\bar j k\bar l},$$
$$D(g_{i\bar j k\bar l p\bar q})=-R_{i\bar jk \bar l; p\bar q}+g^{s\bar t}(R_{p\bar j s\bar l}R_{i\bar q k\bar t}+R_{k\bar j s\bar
l}R_{p\bar q i\bar t}+R_{i\bar j s\bar l}R_{k\bar q p\bar t}).$$ In
general, we can inductively get
\begin{equation}
D(g_{i\bar j k\bar l \beta_1\dots\beta_{r}})=-R_{i\bar j k\bar l;
\beta_1\dots\beta_{r}}+\text{ covariant derivatives of lower order}.
\end{equation}

In Section \ref{secgraph}, this observation will allow us to store
Weyl invariants as polynomials of $g_{a\bar b c \bar d}$ and their
derivatives. The advantage is that we do not need to deal with the
problem of exchanging indices as in the Ricci formula
\eqref{eqcur5}.
\end{remark}

We will now define the concept of an admissible tree, which is a
directed tree with half-edges. We call the two trees in Figure
\ref{fig1} {\it primitive admissible trees}.
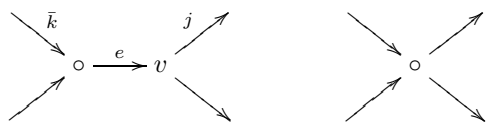
\begin{figure}[h]
$\xymatrix@C=7mm@R=5mm{\ar[dr]^{\bar k} && &\\ &\circ \ar[r]^{e} & v
\ar[ur]^j \ar[dr] &
\\\ar[ur] &&&}$
\qquad\quad $\xymatrix@C=7mm@R=5mm{ \ar[dr] & &\\  &\circ \ar[ur]
\ar[dr]&
\\  \ar[ur] & &}$
\caption{The atomic admissible tree and a trivial admissible tree}
\label{fig1}
\end{figure}

We can define three types of actions by an outward half-edge $i$ or
an inward half-edge $\bar i$ on a directed tree with half-edges. Let
us take the left-hand primitive admissible tree as an example:
\begin{enumerate}
\item The action on a vertex $v$ is to add the half edge to the
vertex (see Figure \ref{fig5}).
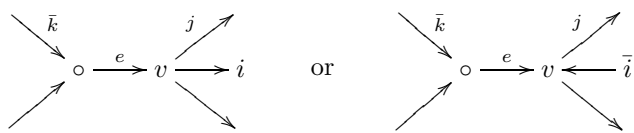
\begin{figure}[h]
\begin{tabular}{c}$\xymatrix@C=7mm@R=5mm{\ar[dr]^{\bar k} && &\\ &\circ
\ar[r]^{e} & v \ar[r]  \ar[ur]^j \ar[dr] & i
\\\ar[ur] &&&}$ \end{tabular} \quad\text{ or
}\quad \begin{tabular}{c}$\xymatrix@C=7mm@R=5mm{\ar[dr]^{\bar k} && &\\
&\circ \ar[r]^{e} & v \ar[ur]^j \ar[dr] & {\bar i} \ar[l]
\\\ar[ur] &&&}$ \end{tabular}
\caption{Action on a vertex $v$} \label{fig5}
\end{figure}
\item The action on a half-edge with the same direction (i.e. $i$ can only act on outward half-edges and $\bar i$ only act on inward half-edges) is to
put them together on a new vertex (see Figure \ref{fig4}).
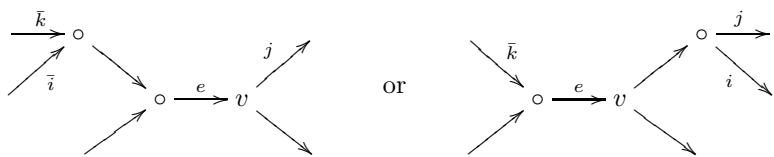
\begin{figure}[h]
\begin{tabular}{c}$\xymatrix@C=7mm@R=5mm{  \ar[r]^{\bar k} &\circ \ar[dr] & & &\\
\ar[ur]_{\bar i} & &\circ \ar[r]^e & v \ar[ur]^j \ar[dr]&
\\  & \ar[ur] & & &}$ \end{tabular}
\quad\text{ or }\quad
\begin{tabular}{c}$\xymatrix@C=7mm@R=5mm{ \ar[dr]^{\bar k} & & &\circ \ar[r]^j \ar[dr]_i &\\  &\circ \ar[r]^e & v \ar[ur] \ar[dr] & &
\\ \ar[ur] & & & &}$ \end{tabular}
\caption{Action on a half-edge} \label{fig4}
\end{figure}
\item The action on an edge $e$ is to generate a new vertex
to split $e$ and add the half-edge to the new vertex (see Figure
\ref{fig3}).
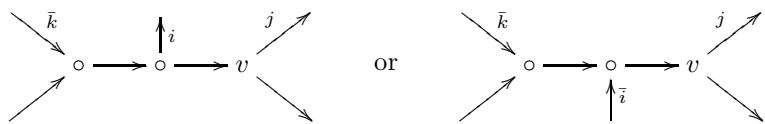
\begin{figure}[h]
\begin{tabular}{c}$\xymatrix@C=7mm@R=5mm{ \ar[dr]^{\bar k} & & & &\\  & \circ \ar[r] &\circ \ar[r] \ar[u]_i& v \ar[ur]^j \ar[dr]&
\\  \ar[ur]& & & &}$ \end{tabular}\quad\text{ or
}\quad
\begin{tabular}{c}$\xymatrix@C=7mm@R=5mm{ \ar[dr]^{\bar k} & & & &\\  & \circ \ar[r] &\circ \ar[r]& v \ar[ur]^j \ar[dr]&
\\ \ar[ur]& &\ar[u]_{\bar i} & &}$ \end{tabular}
\caption{Action on an edge $e$} \label{fig3}
\end{figure}
\end{enumerate}

For any two trees $T_1$ and $T_2$, we can join them by gluing an
outward half-edge of $T_1$ to an inward half-edge of $T_2$, or vice
versa.
\begin{definition}
An {\it admissible tree} (with half-edges) is defined to be the join
of finite number of trees which can be obtained by a finite number
of the above three actions on the primitive admissible trees in
Figure \ref{fig1}. For an admissible tree, we usually label its
outward half-edges and inward half-edges with distinct indices
$\{i,j,k,\dots\}$ and $\{\bar l,\bar m,\bar p,\dots\}$ respectively.
\end{definition}

An admissible tree is called {\it decomposable} if we can cut an
edge to get two admissible trees (see Figure \ref{fig2}). Otherwise,
it is called {\it indecomposable.}

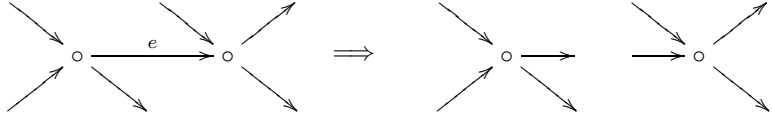
\begin{figure}[h]
\begin{tabular}{c}$\xymatrix@C=7mm@R=5mm{& \ar[dr] & & \ar[dr] & &\\ & & \circ \ar[rr]^e \ar[dr]& & \circ \ar[ur] \ar[dr]&
\\ & \ar[ur]& & & &}$ \end{tabular} $\Longrightarrow$ \quad
\begin{tabular}{c}$\xymatrix@C=7mm@R=5mm{ \ar[dr] & &\\  &\circ \ar[r] \ar[dr]&
\\  \ar[ur] & &} \quad \xymatrix@C=7mm@R=5mm{ \ar[dr] & &\\  \ar[r] &\circ \ar[ur] \ar[dr]&
\\  & &}$ \end{tabular}
\caption{Decomposition by cutting an edge $e$} \label{fig2}
\end{figure}

\begin{definition} \label{defD}
For any $g_{i\bar j k\bar l \beta_1\dots\beta_{r}}$, as in remark
\ref{rm1}, there exists a canonical polynomial of covariant
derivatives of curvature tensor, denoted by $D(g_{i\bar j k\bar l
\beta_1\dots\beta_{r}})$, that coincides with $g_{i\bar j k\bar l
\beta_1\dots\beta_{r}}$ at the center of the normal coordinate. By
\eqref{eqcur1} and \eqref{eqcur4}, $D(g_{i\bar j k\bar l
\beta_1\dots\beta_{r}})$ may be written as a unique universal
polynomial of $g_{a\bar b\gamma_{1}\dots\gamma_t}$ with $t\geq 1$.
\end{definition}

For example,
$$D(g_{i\bar j k\bar l})=g_{i\bar j k\bar l}-g^{m\bar p}g_{m\bar j\bar l}g_{ik\bar p}.$$
Obviously, the above identity can be expressed in terms of
admissible trees
\begin{equation}
D\left(\begin{tabular}{c} \xymatrix@C=5mm@R=5mm{\ar[dr]^{\bar l} &&
\\ &\circ \ar[ur]^k \ar[dr]^i &
\\\ar[ur]^{\bar j} &&}
\end{tabular}\right) = \begin{tabular}{c}\xymatrix@C=5mm@R=5mm{\ar[dr]^{\bar l} &&
\\ &\circ \ar[ur]^k \ar[dr]^i &
\\\ar[ur]^{\bar j} &&}\end{tabular} - \begin{tabular}{c}\xymatrix@C=7mm@R=5mm{\ar[dr]^{\bar l} && &\\ &\circ \ar[r] & \circ \ar[ur]^k \ar[dr]^i &
\\\ar[ur]^{\bar j} &&&}\end{tabular}
\end{equation}

The map $D$ can be naturally extended to be defined on any
admissible tree with both indegree and outdegree $\geq2$ at each
vertex. We simply apply $D$ to each vertex while keeping index
pairings and expanding linearly.

Let $\mathcal T_m(\alpha_1,\dots,\alpha_r)$ denote the set of all
indecomposable admissible trees with $m$ vertices and $r$ half edges
labeled by the set of indices $\{\alpha_1,\dots,\alpha_r\}$.

\begin{lemma} \label{covar} We have
\begin{equation} \label{eqcur9}
D(g_{i\bar j k\bar l \beta_1\dots\beta_{r}})=\sum_{m=1}^{r+2}\
\sum_{T\in \mathcal T_m(i,\bar j, k,\bar l, \beta_1,\dots,\beta_r)}
(-1)^{m+1} T.
\end{equation}
\end{lemma}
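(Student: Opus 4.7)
The plan is to prove the formula by induction on $r$, the number of derivative indices beyond $i,\bar j,k,\bar l$.

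For the base case $r=0$, the definition of the Riemann curvature tensor \eqref{eqcur1} gives directly
\[
D(g_{i\bar j k\bar l}) \;=\; -R_{i\bar j k\bar l} \;=\; g_{i\bar j k\bar l} \;-\; g^{m\bar p}g_{m\bar j\bar l}g_{i\bar p k},
\]
and these two monomials correspond exactly to the two indecomposable admissible trees on four half-edges labeled $\{i,\bar j,k,\bar l\}$: the trivial tree (one vertex, $m=1$, sign $(-1)^{2}=+1$) and the atomic admissible tree (two vertices joined by a single edge, $m=2$, sign $(-1)^{3}=-1$). Since any tree with more vertices would require more half-edges, these are the only contributors, and the claim holds for $r=0$.

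For the inductive step I would differentiate the identity for $D(g_{i\bar j k\bar l\beta_1\dots\beta_r})$ by $\partial_{\beta_{r+1}}$, viewed as an identity of polynomials in the $g$-entries, and then compare with the tree sum for $r+1$. The product rule together with \eqref{eqcur7} shows that $\partial_{\beta_{r+1}}$ acts on each tree monomial $T$ in two ways: (a) differentiating a vertex factor $g_{a\bar b\gamma_1\dots\gamma_t}$ appends a half-edge $\beta_{r+1}$ at that vertex (Action 1 of Figure \ref{fig5}, keeping the sign $(-1)^{m+1}$); (b) differentiating an edge factor $g^{m\bar p}$ produces $-g^{s\bar p}g^{m\bar t}g_{s\bar t\beta_{r+1}}$, which inserts a new vertex on the edge (Action 3 of Figure \ref{fig3}, shifting $m\mapsto m+1$ and picking up sign $(-1)^{(m+1)+1}$ thanks to the explicit minus in \eqref{eqcur7}). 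These two cases enumerate precisely the Action 1 and Action 3 descendants of every indecomposable admissible tree on $r+4$ half-edges.

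The main obstacle is then to account for the Action 2 descendants (Figure \ref{fig4}), which combine two half-edges of the same direction at a new vertex and are not produced by naive partial differentiation. These missing trees are supplied by the Christoffel contribution in \eqref{eqcur4}, because $D$ is defined through covariant rather than partial derivatives of $R$: substituting $\Gamma^\delta_{\beta\gamma}=g^{\delta\bar\epsilon}g_{\beta\bar\epsilon\gamma}$ and its conjugate in the difference between $\partial_{\beta_{r+1}}D(g_{i\bar j k\bar l\beta_1\dots\beta_r})$ and $D(g_{i\bar j k\bar l\beta_1\dots\beta_{r+1}})$ produces exactly a new vertex (with three indices, inward-degree $1$ and outward-degree $2$, or their conjugate) connected by an edge $g^{\delta\bar\epsilon}$ to the existing structure in place of a former half-edge — this is precisely the combinatorics of Action 2, again with sign $(-1)^{(m+1)+1}$. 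Verifying this identification, and simultaneously checking that trees reachable from several parents via different actions are counted with the correct total multiplicity (an inclusion–exclusion compatible with the indecomposability condition), is the combinatorial heart of the argument.
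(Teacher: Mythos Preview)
Your proposal is essentially correct and rests on the same three identifications the paper uses: Action~1 is the product-rule term on a vertex factor, Action~3 comes from \eqref{eqcur7} when a $g^{m\bar p}$ is differentiated, and Action~2 is supplied by the Christoffel terms in \eqref{eqcur4}. The paper's sketch proof makes exactly these observations.

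The organizational difference is that the paper does not induct on the tree sum directly. Instead it differentiates the curvature identity \eqref{eqcur1} itself, obtaining
\[
R_{i\bar jk\bar l\,\beta_1\dots\beta_r}=-g_{i\bar jk\bar l\,\beta_1\dots\beta_r}+(g^{m\bar p}g_{m\bar j\bar l}g_{ik\bar p})_{\beta_1\dots\beta_r},
\]
and then observes that the second term on the right, after applying $D$, produces precisely the \emph{decomposable} admissible trees (those whose irreducible components separate $\{i,k\}$ from $\{\bar j,\bar l\}$), while repeatedly expanding the left side through \eqref{eqcur4} produces precisely the admissible trees that are \emph{indecomposable}. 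This split is what buys the paper its cleanliness: indecomposability is automatic, and the multiplicity bookkeeping you flag as ``the combinatorial heart'' is absorbed into the statement that the covariant-derivative expansion of $R_{i\bar jk\bar l}$ generates each indecomposable tree exactly once. Your route is equivalent but leaves that verification as an explicit inclusion--exclusion, whereas the paper's use of \eqref{eqcur1} as a separator sidesteps it.
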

\begin{proof} (Sketch) We first note that equation \eqref{eqcur7}
corresponds to the action on edges in Figure \ref{fig3} and the
multiplication by $\Gamma$ in equation \eqref{eqcur4} corresponds to
the action on half-edges in Figure \ref{fig4}.

By \eqref{eqcur1}, we have
\begin{equation}
R_{i\bar jk\bar l \beta_1\dots\beta_{r}} =-g_{i\bar j k\bar l
\beta_1\dots\beta_{r}}+(g^{m\bar p}g_{m\bar j\bar l}g_{ik\bar
p})_{\beta_1\dots\beta_{r}}
\end{equation}
The left-hand side is a derivative of $R_{i\bar jk\bar l}$. By
induction, we can see that $D((g^{m\bar p}g_{m\bar j\bar l}g_{ik\bar
p})_{\beta_1\dots\beta_{r}})$ is a sum over all decomposable
admissible trees with the set of indices $\{i,\bar j, k,\bar l,
\beta_1,\dots,\beta_r\}$ such that no irreducible component of the
tree can contain indices from both of the two sets $\{i,k\}$ and
$\{\bar j,\bar l\}$. We can apply \eqref{eqcur4} inductively to see
that all trees from $R_{i\bar jk\bar l \beta_1\dots\beta_{r}}$ are
admissible and get the desired equation \eqref{eqcur9}.
\end{proof}

\vskip 30pt
\section{The local Bergman kernel and the Berezin transform}
\label{berezin}

In this section, we consider only the case that $\mathcal E=\mathbb
C$. So the Bergman kernel $B_m(x)$ is a scalar function on $M$.

Now we introduce the local Bergman kernel following \cite{Eng}. Let
$\Omega$ be a bounded domain in $\mathbb C^n$ and $\Phi$ be a
K\"{a}hler potential for a K\"{a}hler metric $g$ on $\Omega$
satisfying
$$\omega_g=\frac{\sqrt{-1}}{2\pi}\partial\overline\partial\Phi.$$
To simplify, we may assume $\Omega$ is equipped with a normal
coordinate.

Let $\Phi(x,y)$ be an almost analytic extension to a neighborhood of
the diagonal, i.e. $\bar\partial_x\Phi$ and $\partial_y\Phi$ vanish
to infinite order for $x=y$ (cf. \cite{BS}). We can assume
$\overline{\Phi(x,y)}=\Phi(y,x)$. Consider the real valued function
$$D(x, y) = \Phi(x, x) + \Phi(y, y)-\Phi(x, y)-\Phi(y, x),$$
which is called the Calabi diastatic function \cite{Cal}. It is
easily seen that the function $D(x, y)\geq0$ and $D(x, y)=0$ if and
only $x=y$.

We need the following important result of Engli\v{s} \cite{Eng}.
\begin{theorem}{\rm(Engli\v{s})} \label{eng}
There is an asymptotic expansion for the Laplace integral
$$\int_{\Omega} f(y)e^{-mD(x,y)}\frac{\omega_g^n(y)}{n!} \sim \frac{1}{m^n}\sum_{j\geq0}m^{-j}R_j(f)(x),$$
where $R_j: C^\infty(\Omega)\rightarrow C^\infty(\Omega)$ are
explicit differential operators defined by
\begin{equation} \label{eqeng}
R_j f(x)=\frac{1}{\det g}\sum_{k=j}^{3j}\frac{1}{k!(k-j)!}L^k(f\det
g S^{k-j})|_{y=x},
\end{equation}
where $L$ is the (constant-coefficient) differential operator
$$L f(y)=g^{i\bar j}(x)\partial_i\partial_{\bar j} f(y)$$
and the function $S(x,y)$ satisfies $$S=\partial_\alpha
S=\partial_{\alpha\beta}S=\partial_{i_1 i_2\dots
i_m}S=\partial_{\bar i_1\bar i_2\dots \bar i_m}S=0 \quad \text{ at }
y=x,$$
$$\partial_{i\bar j\alpha_1\alpha_2\dots
\alpha_m}S|_{y=x}=-\partial_{\alpha_1\alpha_2\dots \alpha_m}g_{i\bar
j}(x).$$ In particular, we have
$$\begin{cases}
R_0=id
\\ R_1(f)=\Delta f-\frac12 f\rho.
\\ R_2(f)=\frac12\Delta^2f-\frac12 L_{Ric}f-\frac\rho
2\Delta f-\frac12(\rho_i f_{;i}+\rho_{\bar i}f_{;\bar i})\\\qquad
\qquad -(\frac13\Delta\rho-\frac18\rho^2-\frac16
|Ric|^2+\frac1{24}|R|^2)f.
\end{cases}$$
\end{theorem}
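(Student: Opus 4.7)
The plan is to reduce the claim to a standard Gaussian/Laplace asymptotic by splitting the phase $D(x,y)$ into its quadratic Hessian part and a higher-order remainder, which will be exactly the function $S$. Fix $x\in\Omega$ and set $u=y-x$. By the almost analyticity of $\Phi(x,y)$ in $y$ (and in $\bar x$), the Taylor expansion of the diastasis $D(x,y)$ at $y=x$ contains no pure holomorphic or pure antiholomorphic monomials in $u$, and its Hessian coincides with $g_{i\bar j}(x)$. Setting $\Phi_0(x,u):=g_{i\bar j}(x)u_i\bar u_j$, direct inspection shows that the function $S(x,y):=\Phi_0(x,y-x)-D(x,y)$ satisfies precisely the vanishing conditions and mixed-derivative identities listed in the theorem (modulo a flat error at $y=x$ that gets absorbed into the asymptotic remainder).

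With this decomposition, write
$$e^{-mD(x,y)}=e^{-m\Phi_0(x,u)}\sum_{p\geq 0}\frac{m^p}{p!}S(x,y)^p.$$
Using $\omega_g^n(y)/n!=\det g(y)\,d\lambda(u)$ for the appropriately normalized Lebesgue measure, and noting that the contribution from outside a fixed neighborhood of $x$ is $O(e^{-cm})$ (since $D>0$ off the diagonal), the integral reduces to a sum of Gaussian integrals indexed by $p$. Applying the standard Gaussian asymptotic
$$\int e^{-m\Phi_0(x,u)}h(u)\,d\lambda(u)\sim\frac{1}{m^n\det g(x)}\sum_{\ell\geq 0}\frac{1}{\ell!\,m^\ell}L^\ell h(0),\qquad L=g^{i\bar j}(x)\partial_{y_i}\partial_{\bar y_j},$$
to $h=f\det g\cdot S^p$ and collecting the total power of $m$ (which is $m^{p-\ell-n}$), I group terms by $j:=\ell-p\geq 0$. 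Renaming $k:=\ell$ (so $p=k-j$) yields
$$R_j(f)(x)=\frac{1}{\det g(x)}\sum_{k\geq j}\frac{1}{k!(k-j)!}L^k\bigl(f\det g\cdot S^{k-j}\bigr)\big|_{y=x}.$$

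It remains to check the upper bound $k\leq 3j$. By the vanishing conditions $S|_{y=x}=\partial_\alpha S|_{y=x}=\partial_{\alpha\beta}S|_{y=x}=0$, the function $S$ vanishes to order at least three at $y=x$, so $S^{k-j}$ vanishes to order at least $3(k-j)$. Since $L^k$ is a differential operator of order exactly $2k$, the evaluation $L^k(f\det g\cdot S^{k-j})|_{y=x}$ vanishes unless $2k\geq 3(k-j)$, i.e.\ $k\leq 3j$. The explicit forms of $R_0,R_1,R_2$ then follow by a finite computation: $R_0=\mathrm{id}$ is the unique $k=j=0$ term, and for $j=1,2$ one expands $f\det g\cdot S^{k-j}$ in Taylor series around $y=x$, substitutes the given identities $\partial_{i\bar j\alpha_1\dots\alpha_m}S|_{y=x}=-\partial_{\alpha_1\dots\alpha_m}g_{i\bar j}(x)$ and $\partial_\alpha\det g=\det g\cdot g^{m\bar l}g_{m\bar l\alpha}$, and converts the resulting polynomial in $\partial^r g_{i\bar j}$ to curvature expressions using Remark \ref{rm1}.

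The main obstacle is the rigorous justification of the formal manipulation, namely the uniform Taylor-remainder control needed to exchange the sums over $p$ and $\ell$ with integration. This is a classical Laplace/stationary-phase argument that rests on (i) strict positivity and non-degeneracy of the Hessian of $D$ at $y=x$, which is $g_{i\bar j}(x)>0$, (ii) the exponential smallness of $e^{-mD}$ outside a neighborhood of the diagonal, and (iii) uniform bounds on the $L^k$-derivatives of $S^{k-j}$ needed to truncate after finitely many terms. Granted these analytic inputs, the content of the theorem is the closed combinatorial form of $R_j$, which is exactly what the derivation above produces.
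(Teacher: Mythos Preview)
The paper does not prove this theorem; it is stated as a cited result of Engli\v{s} \cite{Eng} and used as input for the rest of the paper. So there is no ``paper's own proof'' to compare against.

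That said, your sketch is essentially the argument Engli\v{s} gives in the cited reference: split the phase into its Hessian quadratic form plus a third-order remainder $S$, expand $e^{mS}$, apply the Gaussian moment formula term by term, and read off $R_j$ by matching powers of $m$. Your derivation of the range $j\le k\le 3j$ from the vanishing order of $S$ is the right mechanism, and your identification of the properties of $S$ via the almost-analytic extension of $\Phi$ is correct. The analytic caveats you flag (non-degeneracy of the Hessian, exponential decay off the diagonal, uniform remainder estimates to justify interchanging the sum and integral) are exactly the technical points Engli\v{s} handles; you are right that they are standard Laplace/stationary-phase ingredients, but a complete proof needs them written out. One small point: the ``flat error'' you mention when defining $S$ from the almost-analytic extension should be tracked explicitly to confirm it contributes only $O(m^{-\infty})$ to the expansion.
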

Here $L_{Ric}f=R_{j\bar i}f_{;i\bar j},\, |Ric|^2=R_{i\bar
j}R_{j\bar i},\, |R|^2=R_{i\bar j k\bar l}R_{j\bar i l\bar k}$.
Note that our convention of curvatures $R_{i\bar j k\bar l},
R_{i\bar j}, \rho$ all differ by a minus sign with that of
\cite{Eng}.

We need the following extension of Tian-Yau-Zelditch asymptotic
expansion \cite{Cat, Eng2, KS}.
\begin{theorem} \label{tyz2}
Let $M$ be a compact complex manifold endowed with a polarized
K\"{a}hler metric $g$. Let $B_m(x, y)$ denote an almost analytic
extension of $B_m(x)$ to an open neighborhood, say $U \times U$ of
the diagonal. Then, for $U$ sufficiently small, $B_m(x, y)$ admits
an asymptotic expansion (as $m\rightarrow +\infty$) of the form:
$$B_m(x, y)\sim\sum_{j\geq0} a_j(x,y)m^{n-j}.$$
\end{theorem}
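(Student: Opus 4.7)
The plan is to combine the on-diagonal expansion (Theorem \ref{tyz}) with the intrinsic holomorphic/antiholomorphic structure of the Bergman kernel and the Laplace integral technique of Engli\v{s} (Theorem \ref{eng}), following the strategy of \cite{Cat, Eng2, KS}. The key observation is that off the diagonal, the Bergman kernel already has a natural ``almost analytic'' candidate built into its definition, so the task reduces to identifying its asymptotic coefficients with almost analytic extensions of the on-diagonal ones.

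First I would pass to a local picture. Fix $x_0\in M$ and a small neighborhood $U\ni x_0$ with a holomorphic frame $e_L$ of $L|_U$ satisfying $\|e_L\|^2_{h_L}=e^{-\Phi}$ for a K\"ahler potential $\Phi$. An $L^2$-orthonormal basis $\{S_j\}$ of $H^0(M,L^m)$ is locally represented by holomorphic functions $\{s_j\}$, so that
$$B_m(x)=e^{-m\Phi(x)}\sum_{j=1}^d |s_j(x)|^2.$$
Pick an almost analytic extension $\Phi(x,y)$ of $\Phi$ that is holomorphic in $x$, antiholomorphic in $y$, and satisfies $\Phi(x,x)=\Phi(x)$, and define
$$B_m(x,y):=e^{-m\Phi(x,y)}\sum_{j=1}^d s_j(x)\overline{s_j(y)}.$$
Because $\sum_j s_j(x)\overline{s_j(y)}$ is honestly holomorphic in $x$ and antiholomorphic in $y$, the only obstruction to full analyticity lies in $e^{-m\Phi(x,y)}$, and $B_m(x,y)$ is therefore almost analytic to infinite order on the diagonal. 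This is the extension whose expansion we seek.

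Next I would invoke the reproducing property of $B_m$ to rewrite $B_m(x)$ as a Laplace integral against the Calabi diastasis $D(x,y)=\Phi(x,x)+\Phi(y,y)-\Phi(x,y)-\Phi(y,x)$. After localizing to $U$ (the off-diagonal contribution is $O(m^{-\infty})$ by standard Agmon-type decay estimates for the Bergman kernel), one obtains an integral of the form $\int_U F_m(x,y)\,e^{-mD(x,y)}\frac{\omega_g^n(y)}{n!}$ with $F_m$ smooth and polynomially bounded in $m$. Applying Theorem \ref{eng} and tracking how the operators $R_j$ act on the extended argument $y\mapsto B_m(x,y)$, one extracts an asymptotic expansion
$$B_m(x,y)\sim\sum_{j\geq0} a_j(x,y)\,m^{n-j}$$
valid in the $C^\mu$-topology on a small neighborhood of the diagonal. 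The coefficients $a_j(x,y)$ automatically inherit the almost analytic property from $B_m(x,y)$.

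Finally, uniqueness pins down the coefficients. Any two almost analytic functions on $U\times U$ agreeing on the diagonal must agree to infinite order along it, so $a_j(x,y)$ is the (unique modulo $O(|x-y|^\infty)$) almost analytic extension of the on-diagonal coefficient $a_j(x)$ from Theorem \ref{tyz}. The main obstacle in this plan is Step 2: making the Laplace expansion genuinely uniform in the parameter $x$ on a neighborhood of the diagonal, and controlling remainders in the required smooth topology simultaneously in $x$ and $y$. This is the technical heart of \cite{Cat, Eng2, KS}, and is ultimately guaranteed by the Boutet de Monvel--Sj\"ostrand parametrix for the Szeg\H{o} kernel on the unit circle bundle of $L^*$, which implies both the off-diagonal exponential decay and the smoothness of the singularity structure used above.
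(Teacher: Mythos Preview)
The paper does not actually prove Theorem \ref{tyz2}. It is stated as a known result, with the attribution ``We need the following extension of Tian-Yau-Zelditch asymptotic expansion \cite{Cat, Eng2, KS},'' and then used as a black box in the subsequent derivation of Loi's recursion. So there is no ``paper's own proof'' to compare against.

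Your sketch is a reasonable outline of how the cited references (Catlin, Engli\v{s}, Karabegov--Schlichenmaier) obtain the off-diagonal expansion, and your identification of the technical crux---uniform control of the Laplace expansion in the off-diagonal parameter, ultimately resting on the Boutet de Monvel--Sj\"ostrand parametrix---is accurate. One small point: in Step 2 you write that one applies Theorem \ref{eng} to extract the expansion of $B_m(x,y)$, but Theorem \ref{eng} as stated gives an on-diagonal expansion (evaluation at $y=x$); to get the genuinely off-diagonal statement you need the parametrized stationary-phase/Laplace expansion with $x$ as a parameter, which is closer to what \cite{BS, Cat, KS} actually do than to Engli\v{s}' Theorem \ref{eng}. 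This is consistent with your own caveat at the end, but the phrasing in Step 2 slightly overstates what Theorem \ref{eng} delivers.
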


For $\alpha > 0$, consider the weighted Bergman space of all
holomorphic function on $\Omega$ square-integrable with respect to
the measure $e^{-\alpha \Phi}\frac{w_g^n}{n!}$.

We denote by $K_\alpha(x,y)$ the reproducing kernel. As pointed out
by Engli\v s \cite{Eng}, it is often the case  that the following
holds. (e.g. if $(\Omega, g_{i\bar j})$ is a bounded symmetric with
the invariant metric or whenever $\Omega$ is strongly pseudoconvex
with real analytic boundary.)
\begin{itemize}
\item[(1)] $K_\alpha(x,y)$ has an asymptotic expansion in a small
neighborhood of the diagonal
\begin{equation}\label{eqb1}
K_\alpha (x,y)\sim e^{\alpha \Phi (x,y)} \sum^\infty_{k=0} b_k (x,y)
\alpha^{n-k}.
\end{equation}
\item[(2)] For any neighborhood $U$ of a point $x \in \Omega$ and a
bounded measurable function $f$,
\begin{equation}\label{eqb2}
\int_{\Omega\backslash U} f(y)\frac{|K_\alpha
(x,y)|^2}{K_\alpha(x,x)} e^{-\alpha \Phi (y)}
\frac{w_g^n(y)}{n!}=o(\alpha^{-k}), \quad \forall\,
k\geq1.\end{equation}
\end{itemize}

Denote $b_k(x,x)$ by $b_k(x)$.
\begin{theorem} \label{berg}
For any $k\geq0$, we have $a_k(x)=b_k(x)$.
\end{theorem}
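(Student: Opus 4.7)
The plan is to exploit the fact that both sequences $\{a_k(x)\}$ and $\{b_k(x)\}$ are forced to satisfy \emph{the same} local recursion, obtained by feeding an off-diagonal asymptotic expansion of the reproducing kernel back into its own reproducing integral and then invoking Engli\v{s}'s Laplace-integral expansion, Theorem \ref{eng}. Since the two sequences share the leading term $a_0(x)=b_0(x)=1$, this common recursion will force $a_k(x)=b_k(x)$ for all $k\geq 0$.

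First I would write down the two reproducing identities in parallel. In a local holomorphic frame around $x$ in which $h_L^m$ has weight $e^{-m\Phi}$, idempotence of the Bergman projector gives, for the holomorphic kernel $K_m(x,y)=\sum_j S_j(x)\overline{S_j(y)}$,
\begin{equation*}
B_m(x)=e^{-m\Phi(x)}K_m(x,x)=e^{-m\Phi(x)}\int_M K_m(x,y)\,\overline{K_m(x,y)}\,e^{-m\Phi(y)}\,\frac{\omega_g^n(y)}{n!},
\end{equation*}
and a strictly analogous identity holds for $K_\alpha(x,x)$ on $\Omega$. By Theorem \ref{tyz2} in the compact case and assumption (1) in the weighted case, each kernel admits an almost analytic extension $K_m(x,y)\sim e^{m\Phi(x,y)}\sum_j a_j(x,y)\,m^{n-j}$, and the off-diagonal decay of the Bergman kernel (respectively assumption (2)) lets me restrict each integral to a fixed small neighborhood $U$ of $x$ modulo an $O(m^{-\infty})$ error.

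Using the Calabi identity
\begin{equation*}
\Phi(x,y)+\Phi(y,x)-\Phi(x,x)-\Phi(y,y)=-D(x,y),
\end{equation*}
the integrand in each case becomes $e^{-mD(x,y)}\bigl(\sum_j a_j(x,y)\,m^{n-j}\bigr)\bigl(\sum_l \overline{a_l(x,y)}\,m^{n-l}\bigr)$, with $b$'s replacing $a$'s in the weighted version. Applying Theorem \ref{eng} termwise and matching powers of $m^{-1}$ (respectively $\alpha^{-1}$) then yields the recursion
\begin{equation*}
a_k(x)=\sum_{p+j+l=k}R_p\bigl(a_j(x,\cdot)\,\overline{a_l(x,\cdot)}\bigr)(x),\qquad k\geq 0,
\end{equation*}
and the identical relation with $b$'s in place of $a$'s. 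Solving for $a_k(x)$ by moving the two $R_0$-contributions from $(j,l)=(0,k)$ and $(k,0)$ to the left-hand side expresses $a_k(x)$ purely in terms of $a_j,\overline{a_l}$ with $j,l<k$ together with the operators $R_p$, which depend only on the germ of $\Phi$ at $x$. Since $\{b_k\}$ satisfies the very same system, induction on $k$ delivers $a_k(x)=b_k(x)$.

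The main obstacle is justifying the passage between the almost-analytic, off-diagonal coefficients $a_j(x,y)$, which literally enter the recursion through the operators $R_p$, and their diagonal values $a_j(x)$. I have to verify that the almost-analytic extension of a smooth function on the diagonal is unique modulo terms vanishing to infinite order on the diagonal, and that the operators $R_p$ are insensitive to such infinite-order ambiguities; this is precisely what makes the right-hand side of the recursion a function of the diagonal data $a_0(x),\dots,a_{k-1}(x)$ and of $\Phi$ alone. Everything else, namely the localization to $U$, the termwise application of Theorem \ref{eng}, and the bookkeeping of $m$-powers, is routine once this rigidity is in place.
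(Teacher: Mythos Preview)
Your approach is essentially the same as the paper's: both derive a common recursion for $\{a_k\}$ and $\{b_k\}$ by feeding the off-diagonal expansion of the kernel into the reproducing identity, localizing, applying Engli\v{s}'s Laplace expansion (Theorem~\ref{eng}), and then inducting from $a_0=b_0=1$. The paper packages the reproducing identity as the Berezin transform of the constant function $1$ (invoking Theorem~\ref{qua} in the compact case) and normalizes by $K_\alpha(x,x)$, which leads to the recursion written in terms of the combinations $\tilde a_k,\tilde b_k$; you skip the normalization and obtain the equivalent recursion $a_k=\sum_{p+j+l=k}R_p(a_j(x,\cdot)a_l(\cdot,x))$, which is precisely Loi's formula \eqref{eqloi} after isolating the $p=0$ terms. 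These are two bookkeeping variants of the same argument, and your identification of the one genuine subtlety (that the $R_p$ depend only on finite jets at the diagonal, hence are insensitive to the infinite-order ambiguity in almost-analytic extensions) is exactly what is needed to close the induction.
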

This theorem means that the global Bergman kernel can be
approximated by the local Bergman kernel. An analytic proof of this
theorem can be found in \cite{BBS}. Here we present a simple proof
which is implicit in \cite{Loi}.

The {\it Berezin transform} is the integral operator
\begin{equation}
I_\alpha f(x)=\int_\Omega
f(y)\frac{|K_\alpha(x,y)|^2}{K_\alpha(x,x)}e^{-\alpha
\Phi(y)}\frac{w_g^n(y)}{n!}.
\end{equation}
At any point for which $K_\alpha (x,x)$ invertible, the integral
converges for each bounded measurable function $f$ on $\Omega$.

Assume that \eqref{eqb1} and \eqref{eqb2} hold, then the Berezin
transform has an asymptotic expansion (see \cite{Eng})
\begin{equation}
I_\alpha f(x)=\sum^\infty_{k=0} Q_k f(x)\alpha^{-k},
\end{equation}
where $Q_k$ are differential operators.

We have that for $(x,y)$ near the diagonal,
\begin{equation} \label{eqb12}
\frac{|K_\alpha (x,y)|^2}{K_\alpha(x,x)} e^{-\alpha \Phi
(y)}=e^{-\alpha D(x,y)} \sum^\infty_{k=0} \tilde b_{k} (x,y)
\alpha^{n-k},
\end{equation}
where $\tilde b_0=1, \tilde b_1(x,y)=b_1(x,y)+b_1(y,x)-b_1(x,x),$
etc.

Fix a bounded neighborhood $U$ of $x$ such that \eqref{eqb12} holds.
Applying Theorem \ref{eng}, we get
\begin{equation} \label{eqb3}
Q_k f(x)=\sum^k_{j=0} R_j (\tilde b_{k-j}(x,y)f(y))|_{y=x},
\end{equation}
where the operators $R_j$ apply to the $y$-variable.

Since $Q_k f=0$ when $k>0$ and $f$ is analytic or anti-analytic,
setting $f=1$ in \eqref{eqb3}, we get
\begin{equation}\label{eqb4}
b_k(x)=-\sum^k_{j=1} R_j(\tilde b_{k-j}(x,y))|_{y=x}, \quad k\geq 1.
\end{equation}

The following result from quantization of K\"ahler manifolds can be
found in \cite{CGR, Loi}.

\begin{theorem} \label{qua}
In the same hypothesis of Theorem \ref{tyz2}, we have a globally
defined function on $M$. $$\psi_m
(x,y)=\frac{e^{-mD(x,y)}|B_m(x,y)|^2}{B_m (x) B_m(y)}.$$ Then
\begin{equation}\label{eqb5}
\int_M \psi_m(x,y) B_m(y)\frac{w_g^n(y)}{n!}=1
\end{equation}
Moreover, for any neighborhood $U$ of $x\in M$ and every smooth
function $f$ on $M$.
$$\int_{M\backslash U}\psi_m(x,y) B_m(y)\frac{w_g^n
(y)}{n!}=o(m^{-k}), \quad \forall\, k\geq1.$$
\end{theorem}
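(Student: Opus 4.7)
The plan is to derive both assertions from the reproducing property of the Bergman kernel and the classical rapid off-diagonal decay of $B_m(x,y)$ on a compact polarized K\"ahler manifold.

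The first step is a bookkeeping check showing $\psi_m$ is globally defined. I would work in a local holomorphic frame $e_L$ of $L$ with $|e_L|_h^2 = e^{-\Phi}$, write an $L^2$-orthonormal basis as $S_j = f_j e_L^m$, and set $K_m(x,y) := \sum_j f_j(x)\overline{f_j(y)}$. The almost analytic extension can be taken to be $B_m(x,y) = K_m(x,y) e^{-m\Phi(x,y)}$, which is invariant under a change of frame $e_L \to \lambda e_L$ because the transformation of $K_m$ exactly cancels that of $\Phi(x,y)$. Using $D(x,y) = \Phi(x,x)+\Phi(y,y)-\Phi(x,y)-\Phi(y,x)$, a direct calculation collapses all the extension and potential factors into the equivalent expressions
$$\psi_m(x,y) = \frac{|K_m(x,y)|^2}{K_m(x,x)K_m(y,y)} = \frac{|B_m(x,y)|_h^2}{B_m(x)B_m(y)},$$
where $|\cdot|_h$ denotes the pointwise Hermitian norm of the section $B_m(x,y) = \sum_j S_j(x) \otimes \overline{S_j(y)} \in L^m \boxtimes \overline{L^m}$. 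This presentation is manifestly globally defined on $M \times M$.

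The identity \eqref{eqb5} now follows from a one-line reproducing-kernel computation:
$$\int_M \psi_m(x,y)B_m(y)\,\frac{\omega_g^n(y)}{n!} = \frac{1}{B_m(x)}\int_M |B_m(x,y)|_h^2\,\frac{\omega_g^n(y)}{n!} = \frac{B_m(x)}{B_m(x)} = 1,$$
where the middle equality expands $B_m$ in the orthonormal basis $\{S_j\}$ and uses $(S_j,S_k) = \delta_{jk}$ in the global $L^2$-metric. For the off-diagonal estimate I would invoke the classical rapid decay of the Bergman kernel: for every $\epsilon>0$ and every $k\geq 0$, $|B_m(x,y)|_h = O(m^{-k})$ uniformly on $\{(x,y): d(x,y)\geq\epsilon\}$, see e.g.\ \cite{DLM, MM2}. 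Since $U$ is a neighborhood of the fixed point $x$, the set $M\setminus U$ is bounded away from $x$, so $|B_m(x,y)|_h^2 = O(m^{-\infty})$ uniformly for $y\in M\setminus U$. Combined with the uniform lower bound $B_m\geq c\,m^n$ from Theorem \ref{tyz}, this gives $\psi_m(x,y)B_m(y) = O(m^{-\infty})$ uniformly on $\{x\}\times(M\setminus U)$, and integration over the compact complement yields the desired $o(m^{-k})$ bound for every $k\geq 1$.

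The only subtle step is the invariance bookkeeping at the beginning: reconciling the different possible meanings of $|B_m(x,y)|^2$ (the pointwise section norm, the modulus squared of the almost analytic extension viewed as a scalar in a local trivialization, or $|K_m(x,y)|^2$) and verifying that the explicit $e^{-mD(x,y)}$ prefactor in the definition converts between them correctly. Once this is done, both halves of the theorem are immediate consequences of standard facts, and the statement serves as a clean packaging of the reproducing property and off-diagonal decay for use in the Berezin transform analysis of Section \ref{berezin}.
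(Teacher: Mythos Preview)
The paper does not actually supply a proof of this theorem: it is stated with the attribution ``The following result from quantization of K\"ahler manifolds can be found in \cite{CGR, Loi}'' and then used as a black box to derive the recursion for the $a_k$. So there is no in-paper argument to compare against.

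Your proof sketch is correct and is exactly the standard argument one finds in the cited references. The identification $\psi_m(x,y)=|K_m(x,y)|^2/(K_m(x,x)K_m(y,y))=|B_m(x,y)|_h^2/(B_m(x)B_m(y))$ is the right bookkeeping, and once it is in place the integral identity \eqref{eqb5} is the reproducing property $\int_M |B_m(x,y)|_h^2\,\omega_g^n(y)/n!=B_m(x)$ divided by $B_m(x)$. The off-diagonal bound is likewise the standard rapid decay of $|B_m(x,y)|_h$ away from the diagonal (as in \cite{DLM, MM2}) combined with the uniform lower bound $B_m\geq c\,m^n$ from Theorem~\ref{tyz}. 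One minor remark: the ``Moreover'' clause in the theorem mentions a smooth function $f$ which does not appear in the displayed estimate; presumably the intended statement is $\int_{M\setminus U} f(y)\psi_m(x,y)B_m(y)\,\omega_g^n(y)/n!=o(m^{-k})$, and your argument covers this as well since multiplying by a bounded $f$ does not affect the $O(m^{-\infty})$ conclusion.
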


From \eqref{eqb5} and Theorem \ref{eng}, we get
$$\sum_{j=0}^k R_j(\tilde a_{k-j}(x,y))|_{y=x}=0,$$ where $\tilde
a_0=1, \tilde a_1(x,y)=a_1(x,y)+a_1(y,x)-a_1(x)$, etc. Note that
this is exactly the same recursive equation as \eqref{eqb4}. Since
$a_0(x)=b_0(x)=1$, we must have $a_k(x)=b_k(x)$ for all $k\geq0$. So
we conclude the proof of Theorem \ref{berg}.

\vskip 30pt
\section{The Bergman kernel of vector bundles}

Let $h(x)$ be a Hermitian metric on $\Omega\times\mathbb C^r$. For
$\alpha
> 0$, consider the weighted Bergman space of all holomorphic
function $f: \Omega\rightarrow \mathbb C^r$ square-integrable with
respect to the norm
\begin{equation}
||f||^2=\int_{\Omega}\langle f,f\rangle_{h} e^{-\alpha
\Phi}\frac{w_g^n}{n!}.
\end{equation}

Now the Bergman kernel $K_\alpha(x,y)$ takes value in matrices. The
results in the last section could be extended without difficulty to
the matrix case.

By the property of reproducing kernels, we have for $v\in \mathbb
C^r$,

\begin{equation} \label{eqb6}
\langle K_\alpha(x,x)f(x), v\rangle=\int_\Omega\langle
K_\alpha(x,y)f(y), K_\alpha(x,y)v\rangle e^{-\alpha\phi(y)}g(y)dy.
\end{equation}

We take $f(x)$ to be a constant function and take the inner product
to be the Hermitian metric $h_{\alpha\bar \beta}$. Let
$H=h_\alpha^\beta \in \Gamma(End(\mathcal {E}))$ and apply Theorem
\ref{eng} to \eqref{eqb6}, we get
\begin{equation} \label{eqb7}
a_k(x)=\sum_{\ell+i+j=k}R_{\ell}(a_i(x,y)a_j(y,x)H)|_{y=x}.
\end{equation}

\begin{remark}
The connection of Engli\v{s}' work and the asymptotic expansion of
Tian-Yau-Zelditch was studied by Loi \cite{Loi}, who obtained the
formula \eqref{eqb7} when $\mathcal E=\mathbb C$ using Theorem
\ref{qua}.
\end{remark}

From the curvature equation $$\Theta_{i\bar j}H=-H_{i\bar
j}+\partial_i H\partial_{\bar j}H,$$ we have
\begin{align}
(H_{i\bar i})_{/j\bar
j}&=-\Theta_{i\bar i / j\bar j}H-\Theta_{i\bar i}H_{j\bar j}+H_{i\bar j}H_{j\bar i} \nonumber\\
\label{eqcur} &=-\wedge \partial \bar \partial \wedge \Theta+\wedge
\Theta \wedge\Theta+\Theta_{i\bar j}\Theta_{j\bar i}.
\end{align}

Since $R_0$ is the identity operator, we have
\begin{equation} \label{eqb8}
a_k(x)=-R_k(H)-\sum_{\substack{i+j=k
\\i,j\geq1}}a_i(x)a_j(x)H-\sum_{\substack{\ell+i+j=k\\1\leq\ell\leq
k-1}}R_\ell(a_i(x,y)a_j(y,x)H)|_{y=x}.
\end{equation}
Then it is not difficult to compute the first few terms
\begin{equation}\label{eqtyz1} a_1(x)=-\Delta H+\frac{\rho}{2}H=\frac{\rho}{2}I+\wedge\Theta.
\end{equation}
Note that in the second equation, we used $H(0)=I$.

For $k=2$, we need a little more work.
$$a_2(x)=-R_2(H)-a_1(x)^2H-R_1(a_1(x,y)H)|_{y=x}-R_1(a_1(y,x)H)|_{y=x}.$$
We compute terms in the right-hand side one by one. By formula
\eqref{eqcur}, we have
\begin{align*}
R_2(H)=&\frac{1}{2}\Delta\Delta H-\frac{1}{2}R_{i\bar j}H_{j\bar i}-\frac{\rho}{2}\Delta H-(\frac{1}{3}\Delta\rho-\frac{1}{8}\rho^2-\frac{1}{6}|Ric|^2+\frac{1}{24}|R|^2)H\\
=&(-\frac{1}{2}\wedge\partial\bar\partial\wedge\Theta+\frac{1}{2}\Theta_{i\bar
j}\Theta_{j\bar
i}+\frac{1}{2}\wedge\Theta\wedge\Theta)+\frac{1}{2}R_{i\bar
j}\Theta_{j\bar
i}+\frac{\rho}{2}\wedge\Theta\\
&-(\frac{1}{3}\Delta\rho-\frac{1}{8}\rho^2-\frac{1}{6}|Ric|^2+\frac{1}{24}|R|^2)I.
\end{align*}
Substituting $a_1(x)$ computed in \eqref{eqtyz1}, we have
$$a_1(x)^2H=\frac{\rho^2}{4}I+\rho\wedge\Theta+\wedge\Theta\wedge\Theta.$$

From almost-analyticity of $a_1(x,y)$, we get
\begin{align*} R_1(a_1(x,y)H)|_{y=x}&=\Delta(a_1(x,y)H)|_{y=x}-\frac{\rho}{2}a_1(x)H\\
&=a_1(x)\Delta
H-\frac{\rho}{2}a_1(x)\\&=-\frac{\rho}{2}\wedge\Theta-\wedge\Theta\wedge\Theta-\frac{\rho^2}{4}I-\frac{\rho}{2}\wedge\Theta
\\&=-\frac{\rho^2}{4}I-\wedge\Theta\wedge\Theta-\rho\wedge\Theta.
\end{align*}
Similarly,
$R_1(a_1(y,x)H)|_{y=x}=-\frac{\rho^2}{4}I-\wedge\Theta\wedge\Theta-\rho\wedge\Theta$.

Summing up the above computation, we arrive at the desired value of
$a_2(x)$,
\begin{align}
a_2(x)=&\frac{1}{2}\Delta\wedge\Theta+\frac{1}{2}(\wedge\Theta)^2+\frac{\rho}{2}\wedge\Theta-\frac{1}{2}\Theta_{i\bar
j}\Theta_{j\bar i}-\frac{1}{2}R_{i\bar j}\Theta_{j\bar
i}\\&+(\frac{1}{3}\Delta\rho+\frac{1}{8}\rho^2-\frac{1}{6}|Ric|^2+\frac{1}{24}|R|^2)I,
\nonumber
\end{align}
where $\Delta\wedge\Theta=\wedge\partial\bar\partial\wedge\Theta$.

\begin{definition} {\rm (Lu \cite{Lu})}
Let $P$ be a $d$-th order covariant derivative of $R_{i\bar jk\bar
l}, R_{i\bar j},\rho,\Theta,\wedge\Theta$. Define the weight and the
order of $P$ to be the numbers $w(P)=(1+\frac d2)$ and ${\rm
ord}(P)=\frac d2$ respectively. The weight and order functions can
be extended additively to monomials of curvatures and their
derivatives.
\end{definition} For example,
$$w(R_{i\bar jk\bar
l})=1,\quad w(R_{i\bar j,k})=\frac32, \quad w(R_{i\bar j,k}R_{i\bar
j k\bar l})=\frac52,
$$
$${\rm ord}(R_{i\bar jk\bar
l})=0,\quad {\rm ord}(\rho_i R_{i\bar jk\bar l})=\frac12,\quad {\rm
ord}(\Delta\wedge\Theta)=1.$$

In the following, the word ``leading term'' will mean the sum of
terms with the highest order.
\begin{lemma} \label{coef1}
The leading term in $a_k$ is $\frac{k}{(k+1)!}\Delta^{k-1}\rho
I+\frac{1}{k!}\Delta^{k-1}\wedge\Theta$.
\end{lemma}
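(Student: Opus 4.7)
The plan is induction on $k$. The base case $k=1$ follows from $a_1 = \tfrac{\rho}{2}I + \wedge\Theta$, and the computation of $a_2$ above provides a sanity check (its order-$1$ part $\tfrac{1}{3}\Delta\rho\,I + \tfrac{1}{2}\Delta\wedge\Theta$ matches $\tfrac{2}{3!}\Delta\rho\,I + \tfrac{1}{2!}\Delta\wedge\Theta$). For the inductive step I assume the claim for all $i<k$, so in particular $\mathrm{ord}(a_i) \leq i-1$, and analyze each term on the right of \eqref{eqb8}.

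By \eqref{eqeng}, the leading part of $R_\ell$ in Lu's order function is $\tfrac{1}{\ell!}\Delta^\ell$, raising order by exactly $\ell$; remaining pieces raise it by at most $\ell-1$. The middle sum of \eqref{eqb8} is then of order $\leq (i-1)+(j-1) = k-2$. In the last sum, terms with both $i,j\geq 1$ are of order $\leq \ell+(i-1)+(j-1) = k-2$. The delicate case is $i=0$ or $j=0$, where naive counting yields $\ell+(k-\ell-1) = k-1$; I claim the true order is still $\leq k-2$. The reason is that $a_{k-\ell}(x,y)$ is almost antiholomorphic in $y$, so every holomorphic $y$-derivative annihilates it to infinite order at $y=x$, forcing each $\partial_{y_i}$ in the Leibniz expansion of $\tfrac{1}{\ell!}\Delta^\ell(a_{k-\ell}(x,y)H)|_{y=x}$ to land on $H$. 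In a bundle normal frame at $x$, where purely holomorphic (or antiholomorphic) derivatives of $H$ vanish at $x$ and every nontrivial bundle curvature factor $\Theta$ absorbs a paired $(\partial,\bar\partial)$ of $H$, a direct count gives order $(k-\ell-1)+(\ell-1) = k-2$, independent of how the antiholomorphic $y$-derivatives distribute between the $a_{k-\ell}$ and $H$ factors. Subleading pieces of $R_\ell$ can only lower the order further.

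Hence the leading (order $k-1$) part of $a_k$ comes entirely from $-R_k(H)$. I decompose $R_k$ into its purely multiplicative part plus its differential part. The multiplicative part applied to $H(x)=I$ reduces to the scalar case $R_k(1)$; by Lu's computation \cite{Lu} its leading part is $-\tfrac{k}{(k+1)!}\Delta^{k-1}\rho$, so $-R_k(H)$ acquires $+\tfrac{k}{(k+1)!}\Delta^{k-1}\rho\,I$. The leading differential piece $\tfrac{1}{k!}\Delta^k$ of $R_k$ applied to $H$ equals $\tfrac{1}{k!}\Delta^{k-1}(\Delta H)$; by the curvature equation \eqref{eqcur} $\Delta H = -\wedge\Theta + g^{i\bar j}\partial_i H\partial_{\bar j}H$, and in bundle normal coordinates the second piece, after $\Delta^{k-1}$ and evaluation at $x$, contributes only at order $\leq k-2$ (since $\partial H(x)=0$ and each extracted $\Theta$ is costly). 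Thus this piece contributes $+\tfrac{1}{k!}\Delta^{k-1}\wedge\Theta$ to $a_k$. All other mixed pieces of $R_k$ carry at least one curvature factor of weight $\geq 1$ alongside a lower-degree differential operator, contributing at order $\leq k-2$. Summing yields the claimed leading part.

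The main obstacle is precisely the delicate order-drop in the last sum of \eqref{eqb8}: naive counting gives the leading order $k-1$, and only the combination of almost-analyticity of $a_j(x,y)$ in $y$ and the $(\partial,\bar\partial)$-pairing required per bundle curvature $\Theta$ extracted from $H$ in a bundle normal frame cuts the actual order to $k-2$. Making this bookkeeping precise and uniform in $\ell$ is the technical heart of the induction; the rest is routine accounting with Lu's order function along \eqref{eqb8} and \eqref{eqeng}.
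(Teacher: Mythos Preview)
Your strategy coincides with the paper's: both reduce to $-R_k(H)$ via \eqref{eqb8} and then extract the coefficients from \eqref{eqeng}. The difference is one of emphasis. You spend most of your effort on the order-reduction argument showing that the remaining sums in \eqref{eqb8} contribute only at order $\leq k-2$; the paper does not spell this out, simply asserting that the coefficient of $\Delta^{k-1}\rho$ in $a_k$ equals $-u_k$. Conversely, the paper carries out the actual computation of $u_k$ (the coefficient of $\Delta^{k-1}\rho$ in $R_k(1)$) directly from \eqref{eqeng}: in normal coordinates only the summands with index $i=k$ and $i=k+1$ can produce a single-curvature term $\Delta^{k-1}\rho$, contributing $-\tfrac{1}{k!}$ and $\tfrac{1}{(k+1)!}$ respectively, so $u_k=-\tfrac{k}{(k+1)!}$.

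At this step you instead write ``by Lu's computation \cite{Lu}'', which is a gap: \cite{Lu} treats only $a_k$ for $k\leq 3$ and does not contain the general leading-term coefficient (that is in \cite{LT} for the scalar case and \cite{LL} with bundle). More to the point, this coefficient is exactly what the lemma is asserting and what the paper is proving independently (see the remark immediately following the lemma), so citing it here is circular. The two-line computation above is the missing ingredient. The bundle coefficient $\tfrac{1}{k!}$ follows by the same direct inspection of \eqref{eqeng} (only the $i=k$ term contributes when all derivatives land on $H$); your derivation via the curvature equation and bundle normal frames is correct but heavier than necessary.
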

\begin{proof}
Let $u_k$ denote the coefficient of $\Delta^{k-1}\rho$ in
$R_{k}(1)$. Then from \eqref{eqeng},
$$R_{k}(1)=\frac{1}{\det g}\sum^{2k}_{i=k}\frac{1}{i!(i-k)!}L^{i}(\det g S^{i-k})|_{y=0},$$
here we take normal coordinate system around point $x=0$ on the
K\"ahler manifold $M$. It is not difficult to see that the only
terms that may contribute to $\Delta^{k-1}\rho$ must have $i=k$ or
$i=k+1$. So we have
$$u_k=-\frac{1}{k!}+\frac{1}{(k+1)!}=-\frac{k}{(k+1)!}.$$
From \eqref{eqb8}, we see that the coefficient of $\Delta^{k-1}\rho$
in $a_k$ equals $-u_k=\frac{k}{(k+1)!}$.

Similarly, we can prove that the coefficient of
$\Delta^{k-1}\wedge\Theta$ in $a_k$ equals $\frac{1}{k!}$.
\end{proof}

\begin{remark} Lemma \ref{coef1} was obtained by Liu and Lu \cite{LL} using peak section
method, improving the $\mathcal E=\mathbb C$ case proved in
\cite{LT}.
\end{remark}

Let $M=M_1 \times M_2$ a product of two projective manifolds
equipped with the product K\"ahler metric and the twisted bundles
$\mathcal E^{(1)}(m)\boxtimes \mathcal E^{(2)}(m)$. Denote the
Bergman kernel of $M$ by $B_m^{(M)}$. The following fact is
well-known.
\begin{lemma}\label{prodrel}
We have $B^{(M)}_m(x_1,x_2) = B^{(M_1)}_m(x_1) B^{(M_2)}_m(x_2)$.
\end{lemma}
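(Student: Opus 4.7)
The plan is to combine the Künneth decomposition of global sections with the product structure of the $L^2$ inner product to show that tensor products of orthonormal bases on the factors give an orthonormal basis on $M$; the lemma then falls out from the very definition of the Bergman kernel. First I would observe that for the twisted polarized bundle $\mathcal E^{(1)}(m)\boxtimes\mathcal E^{(2)}(m) = \pi_1^*(\mathcal E^{(1)}\otimes L_1^m)\otimes \pi_2^*(\mathcal E^{(2)}\otimes L_2^m)$, where $\pi_i\colon M\to M_i$ are the projections, the Künneth formula together with $H^1(M_i,\mathcal O)$-arguments gives a canonical isomorphism
\begin{equation*}
H^0(M,\mathcal E^{(1)}(m)\boxtimes\mathcal E^{(2)}(m))\;\cong\;H^0(M_1,\mathcal E^{(1)}(m))\otimes H^0(M_2,\mathcal E^{(2)}(m)),
\end{equation*}
sending $s_1\otimes s_2$ to the section $(x_1,x_2)\mapsto s_1(x_1)\otimes s_2(x_2)$.

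Next I would check that the $L^2$ inner product is multiplicative under this isomorphism. Since the metric on $M$ is the product metric, the Kähler form is $\omega_M=\pi_1^*\omega_{M_1}+\pi_2^*\omega_{M_2}$, so the volume form splits as $\omega_M^n/n! = (\omega_{M_1}^{n_1}/n_1!)\wedge(\omega_{M_2}^{n_2}/n_2!)$ with $n_i=\dim_{\mathbb C} M_i$. The fibre metric on the box tensor product is by construction the tensor product of the pulled-back fibre metrics, so the pointwise inner product factors. Fubini then yields
\begin{equation*}
(\pi_1^*s_1\cdot\pi_2^*s_2,\;\pi_1^*t_1\cdot\pi_2^*t_2)_{L^2(M)}\;=\;(s_1,t_1)_{L^2(M_1)}\cdot(s_2,t_2)_{L^2(M_2)}.
\end{equation*}
Consequently, if $\{S^{(1)}_i\}$ and $\{S^{(2)}_j\}$ are orthonormal bases of $H^0(M_1,\mathcal E^{(1)}(m))$ and $H^0(M_2,\mathcal E^{(2)}(m))$ respectively, the products $\{\pi_1^*S^{(1)}_i\cdot\pi_2^*S^{(2)}_j\}$ form an orthonormal basis of $H^0(M,\mathcal E^{(1)}(m)\boxtimes\mathcal E^{(2)}(m))$.

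Finally I would plug this basis into the definition of the Bergman kernel. Using the natural identification $\mathrm{End}(\mathcal E^{(1)}(m)\boxtimes\mathcal E^{(2)}(m))_{(x_1,x_2)}\cong\mathrm{End}(\mathcal E^{(1)}(m))_{x_1}\otimes\mathrm{End}(\mathcal E^{(2)}(m))_{x_2}$, we obtain
\begin{equation*}
B^{(M)}_m(x_1,x_2)=\sum_{i,j}\langle\cdot,S^{(1)}_i(x_1)\otimes S^{(2)}_j(x_2)\rangle\,S^{(1)}_i(x_1)\otimes S^{(2)}_j(x_2)=B^{(M_1)}_m(x_1)\otimes B^{(M_2)}_m(x_2),
\end{equation*}
which gives the claim (and reduces to the stated scalar identity when $\mathcal E^{(i)}=\mathbb C$). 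There is no real obstacle here; the only mildly delicate point is the Künneth identification of global sections, but in the polarized setting this is a standard consequence of $L_i$ being ample and, in any case, the asymptotic expansion is about large $m$ where higher cohomologies vanish by Kodaira, so the decomposition of sections is automatic.
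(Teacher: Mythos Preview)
Your proof is correct and follows exactly the same idea as the paper: the paper's argument is the one-line observation that tensor products of orthonormal bases of $H^0(M_i,\mathcal E^{(i)}(m))$ form an orthonormal basis of $H^0(M,\mathcal E^{(1)}(m)\boxtimes\mathcal E^{(2)}(m))$, which is precisely what you spell out in detail via K\"unneth and the product structure of the $L^2$ pairing.
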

\begin{proof}
We just need to note that if $(s_1, \cdots, s_i, \cdots)\  and \
(t_1, \cdots, t_j, \cdots)$ are orthonormal basis of $\mathcal
E^{(1)}(m)$ and $\mathcal E^{(2)}(m)$ respectively, then $(s_1 t_1,
\cdots, s_i t_j, \cdots)$ is an orthonormal basis of $\mathcal
E^{(1)}(m)\boxtimes \mathcal E^{(2)}(m)$.
\end{proof}

 Consider the semigroup $N^\infty$ of
sequences ${\bold d}=(d_1,d_2,\dots)$ where $d_i$ are nonnegative
integers and $d_i=0$ for sufficiently large $i$. We sometimes also
use $(1^{d_1}2^{d_2}\dots)$ to denote $\bold d$. We will use the
following notation:
\begin{equation} \label{eqnotation}
|\bold d|:=\sum_{i\geq 1}i\cdot d_i,\quad ||\bold
d||:=\sum_{i\geq1}d_i, \quad {\bold d}!=\prod_{i\geq1}d_i!, \quad
\bold c\cdot\bold d:=\sum_{i\geq1}c_id_i.
\end{equation}

\begin{proposition}
Let $\bold{c, d, e}\in N^\infty$ and $|\bold c|+\bold d\cdot\bold
e+||\bold e||=k$. Let $u(\bold{c,d,e})$ denote the coefficient of
the weight $k$ monomial $(\text{where }d_j\ne d_{j+1}, \forall\,
j\geq1)$
$$\prod_{i\geq1}
(\Delta^{i-1}\rho)^{c_i}\prod_{j\geq1}(\Delta^{d_j}\wedge\Theta)^{e_j}$$
in $a_k$. Then we have
\begin{equation}\label{eqb9}
u(\bold{c,d,e})=\frac{1}{\bold c!}\prod_{i\geq
1}\left(\frac{i}{(i+1)!}\right)^{c_i}\cdot \frac1{\bold
e!}\prod_{j\geq1}\left(\frac1{(d_j+1)!}\right)^{e_j}.
\end{equation}

\begin{proof}
By Lemma \ref{coef1} and Lemma \ref{prodrel}, we have the following
relations:
\begin{align*}
c_i\cdot u(\bold{c,d,e})&=\frac{i}{(i+1)!} u(\bold c-\bm{\delta}_i,\bold{d,e}), \\
e_j\cdot u(\bold{c,d,e})&= \frac{1}{(d_j+1)!}u(\bold{c,d},\bold
e-\bm{\delta}_j),
\end{align*}
where $\bm{\delta}_i\in N^\infty$ denotes the sequence with $1$ at
the $i$-th place and zeros elsewhere.

From $u(\bold{0,0,0})=1$, we can recursively prove the desired
formula \eqref{eqb9} of $u(\bold{c,d,e})$.
\end{proof}
\end{proposition}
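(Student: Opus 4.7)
The plan is to derive the formula inductively by stripping off one curvature factor at a time, using the product K\"ahler manifold relation (Lemma \ref{prodrel}) as the bookkeeping device and the leading-term values from Lemma \ref{coef1} as the atomic input.

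I would fix an index $i$ with $c_i \geq 1$ and work on $M = M_1 \times M_2$ equipped with the product K\"ahler metric and the bundle $\mathcal E^{(1)}(m) \boxtimes \mathcal E^{(2)}(m)$. Because the metric and bundle are decoupled, the Laplacian, scalar curvature, and $\wedge\Theta$ all split additively: $\Delta^{i-1}\rho^M = \Delta^{i-1}\rho^{M_1} + \Delta^{i-1}\rho^{M_2}$, and similarly for each $\Delta^{d_j}\wedge\Theta^M$. Expanding
$$\mathcal{M}^M = \prod_i (\Delta^{i-1}\rho^M)^{c_i} \prod_j (\Delta^{d_j}\wedge\Theta^M)^{e_j}$$
by distributivity and isolating the cross term in which exactly one $\Delta^{i-1}\rho$ factor lands on $M_1$ and every remaining factor lands on $M_2$, one finds that this term appears with coefficient $c_i \cdot u(\bold{c},\bold{d},\bold{e})$ on the left-hand side of Lemma \ref{prodrel}; the binomial factor $c_i$ counts the choice of which of the $c_i$ identical factors is sent to $M_1$, and a brief check shows that no other universal monomial $\mathcal{M}'^M$ in $a_k^M$ can contribute to this cross term.

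On the right-hand side $a_k^M = \sum_{p+q=k} a_p^{M_1}\, a_q^{M_2}$, the same cross term is accounted for only by $p = i$, $q = k-i$, since $\Delta^{i-1}\rho^{M_1}$ carries weight exactly $i$; moreover, inside $a_i^{M_1}$ only the leading contribution $\tfrac{i}{(i+1)!}\Delta^{i-1}\rho$ from Lemma \ref{coef1} can realize a lone $\Delta^{i-1}\rho^{M_1}$ on $M_1$, as any subleading term of $a_i^{M_1}$ has strictly lower order and therefore a different curvature shape. Matching the two sides yields
\begin{align*}
c_i \cdot u(\bold{c},\bold{d},\bold{e}) &= \tfrac{i}{(i+1)!}\, u(\bold{c}-\bm{\delta}_i, \bold{d}, \bold{e}),\\
e_j \cdot u(\bold{c},\bold{d},\bold{e}) &= \tfrac{1}{(d_j+1)!}\, u(\bold{c},\bold{d}, \bold{e}-\bm{\delta}_j),
\end{align*}
the second relation arising from the entirely analogous argument applied to a $\Delta^{d_j}\wedge\Theta$ factor of weight $d_j+1$.

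With the base case $u(\bold{0},\bold{0},\bold{0}) = 1$ supplied by $a_0 \equiv 1$, iterating these two recursions until $\bold{c}$ and $\bold{e}$ are exhausted gives the closed form \eqref{eqb9}; the denominators $\bold{c}!$ and $\bold{e}!$ emerge naturally from successively stripping off identical factors. The main subtlety, and the step where care is required, is the dual verification that (i) only the monomial $\mathcal{M}$ itself can produce the chosen cross term under the binomial expansion, and (ii) only the top-order contribution to $a_i^{M_1}$ (respectively $a_{d_j+1}^{M_1}$) identified by Lemma \ref{coef1} yields a single $\Delta^{i-1}\rho^{M_1}$ (respectively $\Delta^{d_j}\wedge\Theta^{M_1}$); once both bookkeeping claims are settled the rest is a short induction.
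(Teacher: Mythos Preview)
Your proposal is correct and follows essentially the same approach as the paper: both invoke Lemma \ref{prodrel} on a product manifold together with Lemma \ref{coef1} to obtain the two recursions $c_i\cdot u(\bold{c,d,e})=\tfrac{i}{(i+1)!}u(\bold c-\bm{\delta}_i,\bold{d,e})$ and $e_j\cdot u(\bold{c,d,e})=\tfrac{1}{(d_j+1)!}u(\bold{c,d},\bold e-\bm{\delta}_j)$, and then unwind from the base case $u(\bold 0,\bold 0,\bold 0)=1$. Your write-up simply makes explicit the cross-term bookkeeping that the paper leaves implicit.
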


\vskip 30pt

\section{Weyl invariants and graphs}
\label{secgraph}
In the rest of the paper, we will restrict to the case $\mathcal
E=\mathbb C$, namely $\Theta=0$.

The $a_k$'s in \eqref{eqb10} are the so-called {\it Weyl invariants}
introduced by Fefferman \cite{Fef}. Consider the tensor products of
covariant derivatives of the curvature tensor $R_{i\bar j k \bar l;
p\cdots \bar q}$, e.g.
$$R_{ijk \bar l;p \bar q}\otimes\cdots \otimes  R_{a \bar b c \bar d; \bar
e}.$$ The Weyl invariants are constructed by first pairing up the
unbarred indices to barred indices and then contracting all paired
indices.

For the sake of brevity, Weyl invariants such as
$$g^{{i_1}\bar j_1} g^{{i_2}\bar j_2}g^{{i_3}\bar j_3}g^{{i_4}\bar
j_4}g^{{i_5}\bar j_5} R_{{i_1}\bar j_{1} i_2 \bar j_3; i_5 \bar j_4}
R_{{i_3}\bar j_{2} i_4 \bar j_5 }$$ will be abbreviated as
\begin{equation} \label{eqweyl}
W:=R_{ i_1 \bar i_1 i_2 \bar i_3 ; i_5\bar i_4} R_{ i_3 \bar i_2 i_4
\bar i_5 },
\end{equation}
knowing that $(i_k, \bar i_k), 1\leq k\leq 5$ are paired indices to
be contracted.

It is useful to represent Weyl invariants as digraphs (also called
quivers), namely directed graphs possibly with loops and
multi-edges. We put curvature tensors as nodes and draw a directed
edge from $i_k$ to $\bar i_k$ for each $k$. For example, the
associated graph of the above Weyl invariant \eqref{eqweyl} is
depicted in Figure \ref{fig6}.
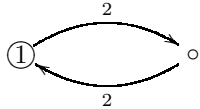
\begin{figure}[h]
$\xymatrix{
        *+[o][F-]{1} \ar@/^1pc/[rr]^2
         & &
        \circ \ar@/^1pc/[ll]^2}
$ \caption{The associated graph of $W$} \label{fig6}
\end{figure}

In this paper, we represent a multidigraph as a weighted digraph.
The weight of a directed edge is the number of multi-edges. The
number attached to a vertex denotes the number of its self-loops. A
vertex without loops will be denoted by a small circle. The indegree
and outdegree of a vertex $v$ are denoted by $\deg^-(v)$ and
$\deg^+(v)$ respectively.

By Ricci formula, we cannot recover a Weyl invariant from its
associated graph, namely different Weyl invariants may have the same
associated graph. As mentioned in Remark \ref{rm1}, a remedy for the
discrepancy is to express the Weyl invariant in terms of derivatives
of the K\"ahler metric. No information will be lost and the
uniqueness of the associated graph is guaranteed by \eqref{eqcur6}.

\begin{definition} We call a vertex $v$ of a digraph $G$ semistable if we have
$$\deg^-(v)\geq1,\ \deg^+(v)\geq1,\ \deg^-(v)+\deg^+(v)\geq3.$$
$G$ is called {\it semistable} if each vertex of $G$ is semistable.
We call $v$ stable if $\deg^-(v)\geq2,\ \deg^+(v)\geq2$. A digraph
$G$ is {\it stable} if each vertex of $G$ is stable.

Let $\mathcal G_{r,s}$ denote the set of stable digraphs with $r$
vertices and $s$ edges. The set of semistable and stable graphs will
be denoted by $\mathcal G^{ss}$ and $\mathcal G$ respectively. The
associated graph of a Weyl invariant lies in $\mathcal G$. For any
$G\in \mathcal G_{r,s}$, its weight $w(G)$ is defined to be $s-r$.
We denote by $\mathcal G^{ss}(k)$ and $\mathcal G(k)$ respectively
the set of semistable and stable digraphs with weight $k$. Let
$\mathcal G_{con}(k)$ and $\mathcal G_{scon}(k)$ respectively be the
set of connected and strongly connected graphs in $\mathcal G(k)$.
We also define a special set of graphs:
$$\Lambda(k)=\{G\in\mathcal G_{scon}(k)\mid 1 \text{ is not an
eigenvalue of } A(G)\}.$$ This set is of interest in view of
Corollary \ref{eigenvalue}. We have computed the cardinalities of
these sets when $k\leq5$ in Table \ref{tb2}.
\end{definition}
\begin{table}[h]
\caption{Numbers of stable graphs} \label{tb2}
\begin{tabular}{|c||c|c|c|c|c|}
\hline
    $k$                               &$1$&$2$&$3$ &$4$  &$5$
\\\hline           $|\mathcal G(k)|$            &$1$&$4$&$15$&$82$&$589$
\\\hline       $|\mathcal G_{con}(k)|$        &$1$&$3$&$11$&$61$&$474$
\\\hline       $|\mathcal G_{scon}(k)|$     &$1$&$3$&$10$&$51$&$373$
\\\hline     $|\Lambda(k)|$     &$1$&$3$&$9$ &$45$&$316$
\\\hline
\end{tabular}
\end{table}
It is not difficult to see that $\mathcal G(k)= \cup_{j=1}^k
\mathcal G_{j,j+k}$ for any $k\geq1$.

\begin{remark}
It shall be interesting to find formulae, closed or recursive, for
the number of graphs in the above sets.
\end{remark}

We can write the coefficient $a_k$ as a polynomial of $g_{i\bar j
\beta_1\dots\beta_{r}},\, r\geq 1$, and consequently as a sum over
graphs in $\mathcal G^{ss}(k)$,
\begin{equation} \label{eqc}
a_k(x)=\sum_{G\in\mathcal G^{ss}(k)} z(G)\cdot G, \quad
z(G)\in\mathbb Q.
\end{equation}
So we may regard $z$ as a map from $\mathcal G^{ss}$ to $\mathbb Q$.
By Remark \ref{rm1} and Lemma \ref{covar}, it is enough to know only
those $z(G)$ of stable graphs $G\in\mathcal G$.

\begin{remark}
For convenience, we assume that the empty graph $\emptyset$ is the
unique element in $\mathcal G(0)$ and $z(\emptyset)=1$. This is
consistent with $a_0=1$.
\end{remark}

First we recall Loi's recursion formula \cite{Loi}
\begin{equation} \label{eqloi}
a_k(x)=-R_k(1)-\sum_{i+j=k \atop
i,j\geq1}a_i(x)a_j(x)-\sum_{\ell+i+j=k\atop 1\leq \ell\leq
k-1}R_\ell (a_i(x,y)a_j(y,x))|_{y=x}.
\end{equation}
It was pointed out to the author recently that essentially the same identity was also obtained independently
in \cite{Cha}.

\begin{remark}
Here we outline the strategy of our proof of the closed formulae for
$z(G)$ in Theorem \ref{main}. We will give a graph-theoretic
interpretation of Loi's formula in Proposition \ref{alltyz}, the key
ingredient is the graphical formulae for derivatives of $\det g$
proved in Lemma \ref{dettree}. Then we extract the coefficient
$z(G)$ for any given graph $G$ in Lemma \ref{single}, which will be
used to prove $z(G)=0$ for any connected but not strongly connected
graph $G$ in Proposition \ref{sink}. By specializing Lemma
\ref{single} to a strongly connected graph $G$, we prove in Lemma
\ref{single2} that $z(G)$ can be written as a summation over
equivalent classes of linear subgraphs of $G$, which implies the
explicit closed formula in Proposition \ref{strong} through a
classical result in spectral graph theory: the Coefficient Theorem.
\end{remark}

Next we introduce some notation. Let $\mathscr L$ be the set of
digraphs consisting of a finite number of vertex-disjoint simple
cycles (i.e. simple polygons without common vertex). The length of a
simple cycle is defined to be the number of its edges. For each
graph $L\in \mathscr L$, we can write $L$ as a finite increasing
sequence of nonnegative integers $[i_1,\dots, i_m]$, meaning $L$
consists of $m$ disjoint simple cycles, whose lengths are specified
by $i_1,\dots,i_m$. We define the index of $L$ to be
\begin{equation}
i(L)=m+i_1+\dots+i_m.
\end{equation}
Note that $[0]$ is just a single vertex and $[1]$ is a vertex with a
self-loop. If $0\notin L$, then $L$ is usually called a linear
digraph. Recall that a {\it linear directed graph} is a digraph in
which $\deg^+(v)=\deg^-(v)=1$ for each vertex $v$.

Given a set of indices ${\alpha_1,\dots,\alpha_r}$, denote by
$\mathscr L(\alpha_1,\dots,\alpha_r)$ the isomorphism classes of all
possible decorations of the vertices of $L\in \mathscr L$ with the
half-edges ${\alpha_1,\dots,\alpha_r}$ requiring each vertex to be
semistable. Two decorations of $L$ that differ by a graph
isomorphism are considered the same.

The following crucial lemma explains the graphical properties of the
partial derivatives of $\det g$.
\begin{lemma} \label{dettree}
We have
\begin{equation} \label{eqdettree}
\frac{1}{\det g}\partial_{\alpha_1}\dots\partial_{\alpha_r}\det g
=\sum_{\substack{L\in \mathscr L(\alpha_1,\dots,\alpha_r)\\0\notin
L}} (-1)^{i(L)} \cdot L.
\end{equation}
\end{lemma}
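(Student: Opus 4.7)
The plan is to prove the identity by induction on $r$, using \eqref{eqdet} and \eqref{eqcur7} to translate a partial derivative into local moves on graphs in $\mathscr L$. The base case $r=0$ is tautological with the convention $i(\emptyset)=0$, and $r=1$ is precisely \eqref{eqdet}: $\partial_{\alpha_1}\det g/\det g = g^{m\bar l}g_{m\bar l\alpha_1}$ is the decorated self-loop $L=[1]$ carrying $\alpha_1$, with $i(L)=2$ and sign $+1$.

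For the inductive step I would write
\[
\frac{\partial_{\alpha_1}\cdots\partial_{\alpha_{r+1}}\det g}{\det g} = \partial_{\alpha_{r+1}}\!\left(\frac{\partial_{\alpha_1}\cdots\partial_{\alpha_r}\det g}{\det g}\right) + \frac{\partial_{\alpha_1}\cdots\partial_{\alpha_r}\det g}{\det g}\cdot\frac{\partial_{\alpha_{r+1}}\det g}{\det g}
\]
and apply the inductive hypothesis to both factors on the right. For each $L\in\mathscr L(\alpha_1,\ldots,\alpha_r)$ with $0\notin L$ supplied by the hypothesis, $\partial_{\alpha_{r+1}}$ distributes via the product rule in two ways. (a) It hits a tensor $g_{m\bar l\,\cdots}$ at a vertex $v$ of $L$, attaching a new half-edge $\alpha_{r+1}$ there (the local analogue of Figure \ref{fig5}); this preserves the cycle shape, hence $i(L)$, and hence the sign. (b) It hits a cycle-edge factor $g^{m\bar l}$ and, by \eqref{eqcur7}, yields $-g^{p\bar l}g^{m\bar q}g_{p\bar q\alpha_{r+1}}$, which inserts a new semistable vertex carrying $\alpha_{r+1}$ on that edge (the analogue of Figure \ref{fig3}), lengthening the host cycle by one; the extra minus sign exactly matches the change $i(L)\mapsto i(L)+1$. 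The second term on the right simply adjoins a fresh disjoint self-loop $[1]$ carrying $\alpha_{r+1}$ to $L$, which increases $i(L)$ by $i([1])=2$ and so preserves the sign.

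To close the induction, I would verify that the combined procedure produces every $L'\in\mathscr L(\alpha_1,\ldots,\alpha_{r+1})$ with $0\notin L'$ exactly once, by classifying the vertex $v^*$ of $L'$ that carries $\alpha_{r+1}$: if $v^*$ carries other half-edges in addition, then $L'$ arises uniquely from (a); if $\alpha_{r+1}$ is the only half-edge at $v^*$ and $v^*$ sits on a cycle of length $\geq 2$, then $L'$ arises uniquely from (b) after contracting $v^*$; and if $v^*$ is an isolated self-loop labelled only by $\alpha_{r+1}$, then $L'$ is supplied by the second term. Semistability is preserved in each case because no vertex other than $v^*$ is altered, and $v^*$ itself either was already semistable without $\alpha_{r+1}$ or is newly created with the correct total degree. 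The main obstacle is precisely this case-by-case bijection-plus-sign check; the only delicate point is the boundary case of an isolated self-loop, which must be attributed to the second term and never to case (b), and this is exactly why the condition $0\notin L$ appears in \eqref{eqdettree}.
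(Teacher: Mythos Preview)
Your proposal is correct and follows essentially the same approach as the paper: induction on $r$ using \eqref{eqdet} and \eqref{eqcur7}. The paper merely exhibits the cases $r=1,2$ and declares the rest ``an easy induction,'' whereas you have written out the inductive step in full (the quotient-rule identity, the three local moves, and the bijection check), which is exactly what the paper's sketch is gesturing at.
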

\begin{proof}
By \eqref{eqcur7} and \eqref{eqdet}, we have $$\frac{1}{\det
g}\partial_{\alpha}\det g=g_{j\bar j\alpha}= \xymatrix{ \circ
\ar@(ul,dl)[]|{} \ar@{-}[r]^\alpha &}.$$
$$
\frac{1}{\det g}\partial_{\beta}\partial_{\alpha}\det g =g_{i\bar
i\alpha}g_{j\bar j\beta}-g_{i\bar j\alpha}g_{j\bar i\beta}+g_{j\bar
j\alpha\beta}$$\bigskip
$$=\quad \xymatrix{ \circ \ar@(ur,ul)[]
\ar@{-}[d]^\alpha & \circ \ar@(ur,ul)[] \ar@{-}[d]^\beta \\&}\quad
-\quad \xymatrix{ \circ\ar@{-}[d]_\beta \ar@/^/[r] &
\circ\ar@{-}[d]^\alpha \ar@/^/[l]\\& }\quad +  \xymatrix{ &\circ
\ar@(ur,ul)[] \ar@{-}[dr]^\alpha \ar@{-}[dl]_\beta &  \\&&}.
$$

So equation \eqref{eqdettree} follows by an easy induction.
\end{proof}

\begin{definition} \label{config}
Let $L\in\mathscr L$, $G_1, G_2 \in \mathcal G^{ss}$. We denote by
$\mathscr G(L,G_1,G_2)$ the set of all different ways to add a
finite number of directed edges to the disjoint union of three
graphs $L,G_1,G_2$ satisfying that
\begin{enumerate}
\item[i)]
No head of these new edges is connected to vertices of $G_1$ and no
tail of these new edges is connected to vertices of $G_2$. Such
$G_1$ and $G_2$ are called source and sink respectively.
\item[ii)] These new edges may act on edges of $G_1$ and $G_2$ as illustrated
in Figure \ref{fig3}.
\item[iii)] The final resulting digraph is semistable.
\end{enumerate}
\end{definition}

For $\mathcal Z\in \mathscr G(L,G_1,G_2)$, we denote by $[\mathcal
Z]$ the resulted graph. Two configurations $[\mathcal Z_1],
[\mathcal Z_2]\in \mathscr G(L,G_1,G_2)$ are considered equal if
there is an automorphism between $[\mathcal Z_1]$ and $[\mathcal
Z_2]$ leaving $L$ invariant and keeping $G_1$ and $G_2$ fixed.

Given $k\geq 1$, we also define a function
\begin{equation} \label{eqF}
F_k(L,G_1,G_2)=\sum_{\substack{\mathcal Z\in\mathscr G(L,G_1,G_2)\\
\omega([\mathcal Z])=k}}\frac{(-1)^{i(L)+1}[\mathcal Z]}{|{\rm
Aut}(\mathcal Z)|},
\end{equation}
where ${\rm Aut}(\mathcal Z)$ is the subgroup of the group of
automorphism of $[\mathcal Z]$ leaving $L$ invariant and keeping
$G_1$ and $G_2$ fixed. We may extend $F_k$ as a bilinear function in
the second and third parameters.

\begin{proposition} \label{alltyz} Given $k\geq1$, we have
\begin{equation} \label{eqgraph}
a_k(x)=\sum_{\substack{L\in\mathscr L,\ G_1,G_2\in\mathcal G^{ss} \\
\omega(G_1)<k,\ \omega(G_2)<k }} z(G_1)z(G_2) F_k(L,G_1,G_2).
\end{equation}
Note that we allow $L,G_1,G_2$ to be empty.
\end{proposition}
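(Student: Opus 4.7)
The plan is to unfold Loi's recursion \eqref{eqloi} term by term using Engli\v{s}'s explicit formula \eqref{eqeng}, and to track, for every Leibniz distribution of derivatives, exactly which graph $[\mathcal Z]$ the contribution lives on. The basic combinatorial picture is this: each factor of $L=g^{i\bar j}(x)\partial_i\partial_{\bar j}$ contributes one contracted pair $(\partial_i,\partial_{\bar j})$, which I draw as a single directed edge from the factor hit by $\partial_i$ to the factor hit by $\partial_{\bar j}$. In $L^{k'}(f\,\det g\,S^{k'-\ell})|_{y=x}$ these $k'$ edges distribute, by the product rule, onto $f$, onto the single $\det g$, and onto the $k'-\ell$ copies of $S$; the graph $[\mathcal Z]$ is read off from where each derivative lands.

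First, I identify the graphical meaning of each derivative landing. Derivatives on $\det g$, after dividing by $\det g$, produce via Lemma \ref{dettree} a disjoint-cycle digraph $L\in\mathscr L$ with $0\notin L$, together with the sign $(-1)^{i(L)}$. Derivatives on an $S$-factor must form a mixed $(i,\bar j)$ pattern, since pure-unbarred or pure-barred derivatives of $S$ vanish at $y=x$; the evaluation $\partial_{i\bar j\alpha_1\dots\alpha_m}S|_{y=x}=-\partial_{\alpha_1\dots\alpha_m}g_{i\bar j}(x)$ then converts each $S$-factor into a single vertex of a stable digraph labeled by a metric derivative. Derivatives on $f$ become source/sink attachments: for $f=1$ nothing lands there, and for $f=a_i(x,y)a_j(y,x)$ I use Theorem \ref{tyz2} to write $a_i(x,y)=\sum_{G_1}z(G_1)\,G_1$ and $a_j(y,x)=\sum_{G_2}z(G_2)\,G_2$ as in \eqref{eqc}, and then almost-analyticity of the extensions forces $\partial_{\bar y}$-derivatives to survive only on $a_j(y,x)$ and $\partial_y$-derivatives only on $a_i(x,y)$. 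This is exactly the source/sink constraint of Definition \ref{config}(i); an extra derivative on an existing edge of $G_1$ or $G_2$ is precisely the edge-splitting action of Figure \ref{fig3}, matching Definition \ref{config}(ii).

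Second, I assemble the three pieces of \eqref{eqloi}. The term $-R_k(1)$ has $f=1$ and contributes the $G_1=G_2=\emptyset$ cases of \eqref{eqgraph}. The sum $-\sum_{i+j=k}a_i(x)a_j(x)$ corresponds to $\ell=0$: no derivative pairs at all, so $L=\emptyset$ and no new edges, giving $[\mathcal Z]=G_1\sqcup G_2$. The remaining sum $-\sum_{\ell+i+j=k,\ \ell\geq 1} R_\ell(a_i(x,y)a_j(y,x))|_{y=x}$ covers all configurations with at least one new edge added. The weight constraint $\omega(G_1),\omega(G_2)<k$ is automatic from $i,j<k$ in Loi's sums. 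The sign $(-1)^{i(L)+1}$ in \eqref{eqF} is the product of the $(-1)^{i(L)}$ from Lemma \ref{dettree} and the global minus sign in front of each term of \eqref{eqloi}.

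The main bookkeeping obstacle is matching the combinatorial factor $\frac{1}{k'!(k'-\ell)!}$ in \eqref{eqeng} with the automorphism denominator $|{\rm Aut}(\mathcal Z)|$ in \eqref{eqF}. This is the standard argument: the Leibniz expansion ranges over ordered tuples of the $k'$ derivative pairs together with an assignment to labeled copies of $S$, and each isomorphism class $[\mathcal Z]$ arises exactly $k'!(k'-\ell)!/|{\rm Aut}(\mathcal Z)|$ times (the numerator counts the relabellings of the derivative pairs and of the $S$-factors, while ${\rm Aut}(\mathcal Z)$ is the stabilizer of a concrete labeling under automorphisms fixing $L,G_1,G_2$). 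Thus, after collapsing labeled distributions to isomorphism classes, the prefactor $1/(k'!(k'-\ell)!)$ becomes $1/|{\rm Aut}(\mathcal Z)|$, and \eqref{eqgraph} follows.
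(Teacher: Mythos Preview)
Your approach is essentially the paper's own: translate Loi's recursion \eqref{eqloi} through Engli\v{s}'s formula \eqref{eqeng} into graphs, with Lemma \ref{dettree} handling the $\det g$ derivatives and the factorial denominators absorbed into $|{\rm Aut}(\mathcal Z)|$. The outline is right, but two points need correction.

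First, the $S$-factors are not generic ``vertices of a stable digraph'' sitting outside $L$; in the proposition's framework they \emph{are} the isolated-vertex components $[0]\in\mathscr L$ of $L$ (this is why $\mathscr L$ is defined to allow cycles of length zero, and why the second factorial $(k'-\ell)!$ in \eqref{eqeng} is precisely the symmetry of those isolated vertices). Your sign analysis accordingly omits the $(-1)$ produced by each evaluation $\partial_{i\bar j\alpha_1\dots\alpha_m}S|_{y=x}=-\partial_{\alpha_1\dots\alpha_m}g_{i\bar j}$. If $L=L'\cup[0^p]$ with $L'$ the linear subgraph coming from $\det g$ (so $0\notin L'$) and $p$ the number of $S$-factors, then $i(L)=i(L')+p$, and the total sign is $(-1)\cdot(-1)^{i(L')}\cdot(-1)^p=(-1)^{i(L)+1}$, matching \eqref{eqF}. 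As you have written it, with $L$ meaning only the $\det g$ part, the sign accounting is incomplete.

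Second, your almost-analyticity claim is reversed. Since $a_i(x,y)$ inherits the convention of the extension of $\Phi$ (namely $\partial_y$ vanishes to infinite order on the diagonal), it is almost anti-holomorphic in $y$; hence the \emph{barred} $y$-derivatives survive on $a_i(x,y)$ and the unbarred ones survive on $a_j(y,x)$ --- the opposite of what you wrote. The paper identifies $G_1\leftrightarrow a_j(y,x)$ (source) and $G_2\leftrightarrow a_i(x,y)$ (sink). Your swapped assignment $G_1\leftrightarrow a_i(x,y)$ together with the reversed analyticity happens to land on the correct source/sink roles of Definition \ref{config}(i), but the justification as stated is wrong.
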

\begin{proof} In fact, \eqref{eqgraph} is just a graph-theoretic interpretation of
Loi's formula \eqref{eqloi}
\begin{equation}\label{eqloi2}
a_k(x)=-\sum_{\ell+i+j=k\atop i< k,\, j< k}R_\ell
(a_i(x,y)a_j(y,x))|_{y=x}.
\end{equation}
We need to take a close look at the operators $R_j$ defined in
\eqref{eqeng}.  Note that the function $S$ in \eqref{eqeng} plays
the role of an isolated vertex $[0]\in\mathscr L$, the cycle with
length zero. The linear subgraphs are contributed by derivatives of
$\det g$ as described in Lemma \ref{dettree}. Moreover, $G_1$ and
$G_2$ respectively represent $a_j(y,x)$ and $a_i(x,y)$ in the
right-hand side of \eqref{eqloi2}. The factorials in the denominator
of \eqref{eqeng} account for the removal of labels of the added
edges and isolated vertices in each configuration. With each term of
\eqref{eqeng} thus interpreted,
 \eqref{eqgraph} follows readily from \eqref{eqloi2}.
\end{proof}

If $[\mathcal Z]=G$, we call $\mathcal Z$ a $G$-configuration (abbr.
$G$-config). For a given digraph $G$, to specify a $G$-configuration
is equivalent to specify three vertex-disjoint subgraphs $L, G_1,
G_2$, where $L$ is a linear directed subgraph of $G$ without
isolated vertex and $G_1, G_2\in\mathcal G^{ss}$ are respectively a
source and a sink. Note that the union of $L, G_1, G_2$ may not
contain all vertices of $G$. Let $\mathcal L$ be the set of all
linear directed subgraphs $L$ of $G$. We may simply write $\mathcal
Z=(L,G_1,G_2)\in G\text{-config}$ and define a function
\begin{equation} \label{eqF2}
F_G(\mathcal Z)=\frac{(-1)^{i(\mathcal Z)+1}}{|{\rm Aut}(\mathcal
Z)|},
\end{equation}
where $i(\mathcal Z)$ is the index of $\mathcal Z$ defined by
\begin{equation}
i(\mathcal Z)=v(L)+p(L)+x(\mathcal Z).
\end{equation}
Here $v(L)$ is the number of vertices in $L$, $p(L)$ is the number
of components in $L$ and $x(\mathcal Z)$ is the number of vertices
of $G$ not belong to $L, G_1, G_2$.

\begin{lemma} \label{single} For any $G\in \mathcal G^{ss}$, we have
\begin{equation}
z(G)=\sum_{\mathcal Z\in G\text{-config}}z(G_1)z(G_2)F_G(\mathcal
Z),
\end{equation}
where $\mathcal Z=(L,G_1,G_2)$ runs over all isomorphism classes of
$G$-configurations.
\end{lemma}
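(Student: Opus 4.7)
The plan is to deduce the lemma directly from Proposition \ref{alltyz} by extracting the coefficient of $G$ on both sides of \eqref{eqgraph}. Since the associated graph of a Weyl invariant is unique (guaranteed by \eqref{eqcur6} together with the convention of writing everything in terms of partial derivatives of $g_{i\bar j}$), the representation $a_k = \sum_{G'\in\mathcal G^{ss}(k)} z(G')\cdot G'$ is unique, so the coefficient of $G$ on the left-hand side of \eqref{eqgraph} is precisely $z(G)$. On the right-hand side, we expand the definition of $F_k(L,G_1,G_2)$ from \eqref{eqF} and collect the configurations $\mathcal Z\in\mathscr G(L,G_1,G_2)$ whose underlying graph $[\mathcal Z]$ equals $G$.

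The next step is to establish a bijection between the set of triples $(L,G_1,G_2,\mathcal Z)$ appearing on the right of \eqref{eqgraph} with $[\mathcal Z]=G$, and the set of $G$-configurations defined before the lemma. Given such a triple, the subgraphs $G_1$ and $G_2$ sit inside $G$ as vertex-disjoint source and sink; the cycles of positive length in $L$ form a linear subgraph of $G$ without isolated vertices, which we take to be the $L$ appearing in the lemma; and the remaining vertices of $G$ must correspond to the isolated vertices $[0]$ in the old $L$ (which encode the $S$-derivatives inside $R_\ell$). Conversely, a $G$-configuration in the sense of the lemma, together with the complementary set of vertices of $G$ which we label as $[0]$'s, recovers an old-style configuration. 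This correspondence preserves isomorphism classes and automorphism groups, since in both settings the automorphism group is the subgroup of ${\rm Aut}([\mathcal Z])$ fixing $G_1$ and $G_2$ pointwise and preserving the cycle decomposition of $L$ (adding labeled isolated vertices does not enlarge this group).

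Finally, I need to check that the signs match. If the old linear graph is $L_0=[i_1,\dots,i_m]$ containing $r$ isolated vertices and a ``true'' linear part $L$ with $p(L)$ components of positive length and $v(L)$ vertices, then
\[
i(L_0)=m+\sum_{s=1}^m i_s=\bigl(r+p(L)\bigr)+v(L)=x(\mathcal Z)+p(L)+v(L)=i(\mathcal Z),
\]
so $(-1)^{i(L_0)+1}=(-1)^{i(\mathcal Z)+1}$, which matches the definition of $F_G(\mathcal Z)$ in \eqref{eqF2}. Combining the bijection with the sign identification turns the coefficient of $G$ into $\sum_{\mathcal Z=(L,G_1,G_2)}z(G_1)z(G_2)F_G(\mathcal Z)$, which is the stated formula.

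The main obstacle is the bookkeeping of the bijection and the handling of isolated vertices: one must be careful that different decompositions of $G$ into (true linear part, isolated vertices, source, sink) give distinct configurations, and that no hidden automorphism collapses them. Once the correspondence is set up cleanly, the sign computation above finishes the argument without further analytical input.
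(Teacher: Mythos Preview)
Your proposal is correct and takes exactly the same approach as the paper, whose proof is the single sentence ``It follows from Proposition \ref{alltyz}.'' You have simply unpacked that one-liner: extracting the coefficient of $G$ from \eqref{eqgraph}, matching the old-style configurations $\mathcal Z\in\mathscr G(L,G_1,G_2)$ with $[\mathcal Z]=G$ to the $G$-configurations defined just before the lemma, and verifying $i(L_0)=i(\mathcal Z)$ so that the signs in \eqref{eqF} and \eqref{eqF2} agree.
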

\begin{proof}
It follows from Proposition \ref{alltyz}.
\end{proof}

Now we treat the case that $G\in \mathcal G^{ss}(k)$ is strongly
connected. In the right-hand side of \eqref{eqloi}, the graphs from
the second term are not connected. The graphs from the last term are
not strongly connected, since it contains either a source or a sink.
We see that only the first term $-R_k(1)$ contributes to $z(G)$ and
all $G$-configurations are of the form $(L,\emptyset,\emptyset)$. It
is not difficult to see that the automorphism group of the
$G$-configuration $(L,\emptyset,\emptyset)$, denoted by ${\rm
Aut_G}(L)$, is the subgroup of ${\rm Aut}(G)$ leaving $L$ invariant.
Two $G$-configurations $(L_1,\emptyset,\emptyset)$ and
$(L_2,\emptyset,\emptyset)$ are equivalent if and only if
$L_1=h(L_2)$ for some $h\in {\rm Aut}(G)$. This defines an
equivalence relation $\sim$ on $\mathcal L$. So Lemma \ref{single}
specializes to the following formula for strongly connected graphs.
\begin{lemma} \label{single2} For any strongly connected graph $G\in \mathcal G^{ss}$, we have
\begin{equation}\label{eqzstrong}
z(G)=\sum_{L\in \mathcal L/\sim}\frac{(-1)^{|V|+1+p(L)}}{|{\rm
Aut}_G(L)|},
\end{equation}
where $L$ runs over the equivalence classes of linear subgraphs of
$G$ and $p(L)$ is the number of components in $L$.
\end{lemma}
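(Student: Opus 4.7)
The plan is to specialize Lemma \ref{single} to a strongly connected $G$, show that only configurations of the form $(L,\emptyset,\emptyset)$ survive, and then collect terms by $\mathrm{Aut}(G)$-orbits of linear subgraphs. Starting from
$$z(G) \;=\; \sum_{\mathcal Z = (L, G_1, G_2) \in G\text{-config}} z(G_1)\, z(G_2)\, F_G(\mathcal Z),$$
the task reduces to excluding every configuration with $G_1\neq\emptyset$ or $G_2\neq\emptyset$, and then extracting the sign.

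The heart of the argument is a source/sink vanishing under strong connectivity. By Definition \ref{config}, $G_1$ is a source, meaning no added edge has its head at a vertex of $V(G_1)$; hence inside $[\mathcal Z] = G$ every incoming edge at a vertex of $V(G_1)$ already lies within $G_1$. If $V(G_1)$ were a nonempty \emph{proper} subset of $V(G)$, then no edge of $G$ would go from $V(G)\setminus V(G_1)$ into $V(G_1)$, contradicting strong connectivity. The case $G_1 = G$ is ruled out by the inherited weight constraint $\omega(G_1) < k$ from Proposition \ref{alltyz}, so $G_1 = \emptyset$. A symmetric argument applied to $G_2$ as a sink yields $G_2 = \emptyset$.

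For the surviving configurations $z(G_1)\,z(G_2) = z(\emptyset)^2 = 1$, and the leftover-vertex count is $x(\mathcal Z) = |V| - v(L)$, so $i(\mathcal Z) = v(L) + p(L) + (|V| - v(L)) = |V| + p(L)$ and therefore $F_G(\mathcal Z) = (-1)^{|V| + p(L) + 1}/|\mathrm{Aut}_G(L)|$. Two such configurations $(L_1, \emptyset, \emptyset)$ and $(L_2, \emptyset, \emptyset)$ are equivalent as $G$-configurations exactly when some $h \in \mathrm{Aut}(G)$ carries $L_1$ to $L_2$, i.e., when $L_1 \sim L_2$; moreover the configuration's automorphism group coincides with the stabilizer $\mathrm{Aut}_G(L)$. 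Summing over representatives of $\mathcal L/\sim$ then yields exactly \eqref{eqzstrong}. The main obstacle is making the source/sink vanishing step airtight — verifying that Definition \ref{config}'s combinatorial conditions genuinely translate to the structural obstruction that contradicts strong connectivity, and keeping track of the implicit weight bounds inherited from Proposition \ref{alltyz}; once that is done, the rest is routine sign bookkeeping.
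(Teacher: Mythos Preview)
Your proof is correct and follows essentially the same approach as the paper: both arguments observe that strong connectivity of $G$ forces $G_1=G_2=\emptyset$ via the source/sink structure, then identify the surviving configurations with $\mathrm{Aut}(G)$-orbits of linear subgraphs and compute the sign $i(\mathcal Z)=|V|+p(L)$. The only cosmetic difference is that the paper phrases the vanishing step by referring back to the three terms of Loi's formula \eqref{eqloi} (the second term gives disconnected graphs, the third gives graphs with a source or sink), whereas you argue it directly from Definition~\ref{config} and the weight bound inherited from Proposition~\ref{alltyz}.
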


\vskip 30pt

\section{Proof of Theorem \ref{main}}

\begin{proposition} \label{disjoint}
If $G\in \mathcal G^{ss}$ is not connected and we write $G$ as a
disjoint union of connected subgraphs $G=G_1\cup\dots\cup G_m$, then
we have
\begin{equation}
z(G)=\prod_{j=1}^m z(G_j)/|Sym(G_1,\dots,G_m)|,
\end{equation}
where $Sym(G_1,\dots,G_m)$ denote the permutation group of these $m$
connected subgraphs.
\end{proposition}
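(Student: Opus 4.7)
The plan is to deduce the multiplicativity of $z$ over connected components from the product formula for Bergman kernels (Lemma \ref{prodrel}), exploiting the block structure of the curvature tensor on a product K\"ahler manifold. This mirrors the strategy already used to prove Lemma \ref{coef1}.

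First I would take two polarized K\"ahler manifolds $(M_1,g_1)$ and $(M_2,g_2)$ and apply Lemma \ref{prodrel} together with the Tian--Yau--Zelditch expansion to obtain
$$a_k^{(M_1\times M_2)}(x_1,x_2)=\sum_{i+j=k}a_i^{(M_1)}(x_1)\,a_j^{(M_2)}(x_2).$$
Next I would observe that on a product metric the derivatives $g_{a\bar b\beta_1\dots\beta_r}$ vanish unless all indices come from the same factor, so in the $g_{a\bar b\beta_1\dots\beta_r}$-representation every surviving monomial has its associated digraph with all vertices living on a single factor. Consequently, if $C$ is a connected stable graph then $C|_{M_1\times M_2}(x_1,x_2)=C(x_1)+C(x_2)$, and for a stable graph $G$ with connected components $C_1,\dots,C_m$
$$G|_{M_1\times M_2}(x_1,x_2)=\prod_{c=1}^m\bigl(C_c(x_1)+C_c(x_2)\bigr)=\sum_{S\subseteq[m]}\prod_{c\in S}C_c(x_1)\prod_{c\notin S}C_c(x_2).$$

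Substituting this into the identity above via \eqref{eqc} and matching the coefficient of $H_1(x_1)H_2(x_2)$ (valid because, over generic metrics, distinct Weyl invariants on $M_1$ and $M_2$ are linearly independent in the $g_{a\bar b\beta_1\dots\beta_r}$-representation), the only graphs contributing to a prescribed isomorphism-class pair $(H_1,H_2)$ are those isomorphic to $H_1\sqcup H_2$. Writing $a_t, b_t$ for the multiplicities of a connected isomorphism class $t$ in $H_1$ and $H_2$ respectively, the number of colorings of the components of $G=H_1\sqcup H_2$ producing the ordered pair $(H_1,H_2)$ is the multinomial $\prod_t\binom{a_t+b_t}{a_t}$, and matching coefficients yields the pair identity
$$z(H_1\sqcup H_2)\prod_{t}\binom{a_t+b_t}{a_t}=z(H_1)\,z(H_2).$$

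Finally I would conclude by induction on $m$, setting $H_1=G_1\sqcup\cdots\sqcup G_{m-1}$ and $H_2=G_m$ in the pair identity. Since $G_m$ is connected, the only affected multiplicity is that of the class $[G_m]$; the factor $\binom{a_{[G_m]}+1}{a_{[G_m]}}=a_{[G_m]}+1$ coming from the multinomial is exactly the ratio $|Sym(G_1,\dots,G_m)|/|Sym(G_1,\dots,G_{m-1})|$, so the symmetry factors combine correctly. The main obstacle is the combinatorial bookkeeping: one must correctly pair the multinomial coloring counts against $|Sym|$ class by class. Once expressed in terms of the multiplicities $a_t$ of connected isomorphism types, this reduces to the elementary identity $(a_t+b_t)!=a_t!\,b_t!\binom{a_t+b_t}{a_t}$ applied termwise, and the proposition follows.
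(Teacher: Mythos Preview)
Your proposal is correct and follows exactly the approach the paper intends: the paper's own proof is the single sentence ``It follows from Lemma \ref{prodrel},'' and what you have written is precisely the unpacking of that sentence --- pass to a product manifold, use the block structure of the metric jets to see that connected graphs split additively, expand both sides of $a_k^{(M_1\times M_2)}=\sum_{i+j=k}a_i^{(M_1)}a_j^{(M_2)}$ in the graph basis, and match coefficients. Your handling of the symmetry factor via the multinomial identity and induction on $m$ is the correct bookkeeping that the paper leaves implicit.
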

\begin{proof}
It follows from Lemma \ref{prodrel}.
\end{proof}

By the above proposition, we may restrict to compute $z(G)$ for
connected $G$. For a digraph $G=(V,E)$, we can partition $V$ into
strongly connected components, namely the maximal strongly connected
subgraphs of $G$. Among these strongly connected components, we have
at least one {\it sink} (a component without outgoing edges) and one
{\it source} (a component without incoming edges).

\begin{proposition} \label{sink}
Given a stable graph $G\in \mathcal G$ which is connected but not strongly connected, then we have
$z(G) = 0$.
\end{proposition}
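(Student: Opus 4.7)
The plan is to use Lemma~\ref{single} together with a sign-reversing pairing of $G$-configurations based on a canonical source SCC of $G$, invoking Lemma~\ref{single2} to identify the cancellation.

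I would first establish a structural lemma: in any $G$-configuration $\mathcal{Z} = (L, G_1, G_2)$ with $[\mathcal{Z}]=G$, the vertex set $V(G_1)$ is a down-set (order ideal) in the SCC condensation poset $P$ of $G$, $V(G_2)$ is an up-set, and $G_1, G_2$ are the induced subgraphs of $G$ on these vertex sets. Indeed, any edge of $G$ with head in $V(G_1)$ and tail outside cannot be a ``new edge'' by the source condition on $G_1$, and cannot lie in $E(L)$ or $E(G_2)$ by the endpoint constraints, so it must lie in $E(G_1)$ --- forcing its tail into $V(G_1)$, a contradiction. The same argument applied inside one SCC shows $V(G_1)$ cannot properly split an SCC, and symmetrically for $V(G_2)$.

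Since $G$ is connected but not strongly connected, $|P|\ge 2$ and $P$ has at least one source SCC; fix a canonical one $S$ (in an $\mathrm{Aut}(G)$-equivariant way). I would then pair each configuration $\mathcal{Z}=(L,\, S\sqcup G_1',\, G_2)$ with $S\subseteq V(G_1)$ to the family of ``partner'' configurations $\{(L\sqcup L_S,\, G_1',\, G_2): L_S \text{ linear subgraph of } S\}$, where the vertices of $S$ not covered by $L_S$ become uncovered. A direct calculation yields
\begin{equation*}
i(\mathcal{Z}') - i(\mathcal{Z}) = p(L_S) + |V(S)|,\qquad |\mathrm{Aut}(\mathcal{Z}')| = |\mathrm{Aut}(\mathcal{Z})|\cdot|\mathrm{Aut}_S(L_S)|.
\end{equation*}
Summing the partner contributions and applying Lemma~\ref{single2} to the strongly connected graph $S$ (evaluating $\sum_{L_S/\sim}(-1)^{p(L_S)}/|\mathrm{Aut}_S(L_S)| = (-1)^{|V(S)|+1}z(S)$), the partner total works out to $-z(S)\,z(G_1')\,z(G_2)\,F_G(\mathcal{Z})$, which exactly cancels the original contribution $z(S\sqcup G_1')z(G_2)F_G(\mathcal{Z}) = z(S)z(G_1')z(G_2)F_G(\mathcal{Z})$ obtained from Proposition~\ref{disjoint}.

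The residual configurations are those with $S\subseteq V(G_2)$: the successor-closure of $V(G_2)$ forces inclusion of all SCCs reachable from $S$, and combined with $\omega(G_2)<\omega(G)$, either no such configuration exists (when $S$ is the unique source of $G$) or they are handled symmetrically using a canonical sink SCC $T$ and the mirror pairing ($T\subseteq V(G_2)$ toggled against $T$ in $V_{\mathrm{rest}}$). The main obstacle is the automorphism bookkeeping: when $\mathrm{Aut}(G)$ permutes isomorphic SCCs (including $S$ itself or components of $G_1'$), the factors $|\mathrm{Sym}(\cdot)|$ in Proposition~\ref{disjoint} and the stabilizer ratios $|\mathrm{Aut}(\mathcal{Z}')|/|\mathrm{Aut}(\mathcal{Z})|$ must be computed orbit-wise to preserve the cancellation identity; and when $G$ has multiple incomparable source--sink pairs, the pairings for $S$ and $T$ must be interleaved (or iterated over a fixed linear extension of $P$) so as to exhaust all configurations exactly once, without double-counting.
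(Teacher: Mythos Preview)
Your approach is essentially the dual of the paper's own proof: you pair on a source SCC $S$ and toggle $S\subseteq V(G_1)$ against $S\subseteq V(L)\cup V_{\text{uncov}}$, whereas the paper pairs on a sink $C$ and toggles $C\subseteq V(G_2)$ against $C$-configurations $(L'',\emptyset,\emptyset)$. The core cancellation --- summing over linear subgraphs of the terminal SCC and invoking (the content of) Lemma~\ref{single2} --- is the same.

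There is, however, a real gap. Your pairing only covers configurations in which $G_1$ is a \emph{genuine} disjoint union $S\sqcup G_1'$. When there are edges of $G$ from $S$ into $V(G_1)\setminus V(S)$, two things fail: (i)~the partner $(L\sqcup L_S, G_1', G_2)$ is no longer a valid $G$-configuration, because $G_1'$ is not predecessor-closed (those $S\to G_1'$ edges become ``new edges'' with head in $G_1'$); and (ii)~Proposition~\ref{disjoint} does not give $z(G_1)=z(S)z(G_1')$. The paper handles exactly the analogous case by \emph{induction on the weight}: the offending component of $G_1$ (resp.\ of $G_2$) is connected but not strongly connected of strictly smaller weight, hence has $z=0$, so these configurations contribute nothing. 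You never invoke induction; instead you propose to absorb the residual cases by a second, interleaved pairing on a sink $T$ across a linear extension of $P$. That can be made to work, but it is substantially more intricate than you suggest, and is unnecessary once induction is in hand.

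A second, smaller issue: the identity $|\mathrm{Aut}(\mathcal Z')|=|\mathrm{Aut}(\mathcal Z)|\cdot|\mathrm{Aut}_S(L_S)|$ is false as stated (take $\mathrm{Aut}(G)$ acting trivially on $S$ but $|\mathrm{Aut}_S(L_S)|>1$). What one actually needs is an orbit--stabilizer count matching the heads/tails of the edges between $S$ and the rest of $G$; this is precisely the paper's equation~\eqref{eqkey}, and is what you are gesturing at in your final paragraph. With induction added and the automorphism bookkeeping done orbit-wise, your argument becomes the paper's proof verbatim up to the source/sink duality.
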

\begin{proof} By definition, any sink or source of a stable graph $G$ must be at
least semistable.
Without loss of generality, we may assume that $C\in\mathcal G^{ss}$ is a sink of $G$.
We
have a nonempty subgraph $G'$ of $G$ containing all vertices not in
$C$, such that
 there are $k$ arrows $e_1,\dots,e_k$ from $G'$ to $C$,
 as depicted in Figure \ref{figsink}.
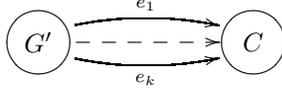
\begin{figure}[h]
$\xymatrix{ *+++[o][F-]{G'} \ar@/^/@{->} @< 3pt> [rr]^{e_1} \ar
@{-->}[rr] \ar@/_/@{->} @<-3pt> [rr]_{e_k} && *+++[o][F-]{C} }$
\caption{Decomposition of $G$ into a sink $C$ and $G'$}
\label{figsink}
\end{figure}

In this case, any $G$-configuration is a union of $\mathcal
Z'=(L',G_1',G_2')\in G'\text{-config}$ and $\mathcal
Z''=(L'',\emptyset,\emptyset)\in C\text{-config}$. Since $G$ is stable,
we can rule out Case (ii) of Definition \ref{config}.

 Let ${\rm
Aut}_f(G)$ be the subgroup of ${\rm Aut}(G)$ fixing all edges
$e_1,\dots,e_k$. Let ${\rm Aut}_{f}(C)$ and ${\rm Aut}_{f}(\mathcal
Z'')$ be respectively the subgroups of ${\rm Aut}(C)$ and ${\rm
Aut}(\mathcal Z'')$
 fixing the heads of the arrows $e_i,\,1\leq i\leq k$.
We define ${\rm Aut}_{f}(\mathcal Z')$ to be the subgroup of ${\rm
Aut}(\mathcal Z')$ fixing the tails of the arrows $e_i,\,1\leq i\leq
k$.

Given a $G'$-configuration $\mathcal Z'=(L',G_1',G_2')$ and a
$C$-configuration $\mathcal Z''=(L'',\emptyset,\emptyset)$, we
define the following set
\begin{equation}
S(\mathcal Z',\mathcal Z'')=\left\{\text{isomorphism classes in
}G\text{-config}\text{ of the form } \mathcal Z'\coprod p(\mathcal
Z''),\ p\in\frac{{\rm Aut} (C)}{{\rm Aut}_f (C)}\right\},
\end{equation}
where $p(\mathcal Z'')=(p(L''),\emptyset,\emptyset)\in
C\text{-config}$ and $\mathcal Z'\coprod p(\mathcal Z'')=(L'\coprod
p(L''),G_1',G_2')$.

First we assume that all automorphisms of $G$ fix these $e_i,\,1\leq
i\leq k$, namely ${\rm Aut}(G)= {\rm Aut}_f(G)$. By our assumption,
for $\mathcal Z=\mathcal Z'\coprod p(\mathcal Z'')\in S(\mathcal
Z',\mathcal Z'')$ we have ${\rm Aut}(\mathcal Z)={\rm
Aut}_f(\mathcal Z'){\rm Aut}_f(p(\mathcal Z''))$

It is enough to show that for any fixed $G'$-configuration $\mathcal
Z'=(L',G_1',G_2')$, the contribution to $z(G)$ from the
$G$-configurations $(L',G_1',G_2'\coprod C)$ and $\{S(\mathcal
Z',\mathcal Z'')\}_{\mathcal Z''\in C\text{-config}}$ add up to
zero.

We will proceed by induction on the weight of $G$. Namely we assume that if a semistable graph $H\in\mathcal G^{ss}$
is connected but not strongly connected and the weight of $H$
is less than the weight of $G$, then $z(H)=0$ (cf. Corollary \ref{conn}).

First we assume $G_2'=\emptyset$.
By Lemma \ref{single}, the contribution to $z(G)$ from the
$G$-configuration $\mathcal Z=(L',G_1',C)$ is
\begin{align}
&z(G_1')z(c)\frac{(-1)^{i(\mathcal Z')+1}}{|{\rm Aut}_f(\mathcal
Z')|}\times\frac{|{\rm Aut}(C)|}{|{\rm Aut}_{f}(C)|} \nonumber\\
=&z(G_1')\frac{(-1)^{i(\mathcal Z')+1}}{|{\rm Aut}_f(\mathcal
Z')|}\sum_{\mathcal Z''\in C\text{-config}}\frac{(-1)^{i(\mathcal
Z'')+1}}{|{\rm Aut}(\mathcal Z'')|}\times\frac{|{\rm Aut}(C)|}{|{\rm
Aut}_f(C)|}. \label{cont1}
\end{align}

The contribution to $z(G)$ from the set of $G$-configurations
$\{S(\mathcal Z',\mathcal Z'')\}_{\mathcal Z''\in C\text{-config}}$
is
\begin{equation} \label{cont2}
z(G_1')\frac{1}{|{\rm Aut}_f(\mathcal Z')|}\sum_{\substack{\mathcal
Z''\in C\text{-config}\\\mathcal Z'\coprod p(\mathcal Z'')\in
S(\mathcal Z',\mathcal Z'')}} \frac{(-1)^{i(\mathcal Z')+i(\mathcal
Z'')+1}}{|{\rm Aut}_f(p(\mathcal Z''))|}.
\end{equation}

We now show that the contributions to $z(G)$ in \eqref{cont1} and
\eqref{cont2} add up to zero, namely we need to prove that for any
$\mathcal Z'' \in C\text{-config}$
\begin{equation} \label{eqkey}
\frac{1}{|{\rm Aut}(\mathcal Z'')|}\times\frac{|{\rm Aut}(C)|}{|{\rm
Aut}_f(C)|}=\sum_{\mathcal Z'\coprod p(\mathcal Z'')\in S(\mathcal
Z',\mathcal Z'')}\frac{1}{|{\rm Aut}_f(p(\mathcal Z''))|}.
\end{equation}
This can be seen as follows: Denote by $H$ the set of all possible
distributions of the labeled heads of $e_i,\,1\leq i\leq k$ on $C$.
Note that some of these $e_i$ may have the same head on $C$. It is
obvious that $H$ is in one-to-one correspondence with ${\rm Aut}(C)/
{\rm Aut}_{f}(C)$.

We have the natural action of ${\rm Aut}(\mathcal Z'')$ on $H$, then
the set of orbits is just $S(\mathcal Z',\mathcal Z'')$ and the
isotropy group at $\mathcal Z=\mathcal Z'\coprod p(\mathcal Z'')\in
S(\mathcal Z',\mathcal Z'')$ is just ${\rm Aut}_f(p(\mathcal Z''))$.

If $G_2'\neq\emptyset$ and $G_2'$ is connected with a tail of some
$e_i$ ,then the contribution to $z(G)$ from the $G$-configurations
$(L',G_1',G_2'+ C)$ is zero by induction and all configurations in
$\{S(\mathcal Z',\mathcal Z'')\}_{\mathcal Z''\in C\text{-config}}$
are not allowed. So we may assume that there are no edges between
$G_2'$ and $C$, then we have the same cancellation by Proposition
\ref{disjoint}. Note that $G_2'+ C$ denote the induced subgraph whose
vertices are the union of vertices of $G_2'$ and $C$.

If ${\rm Aut}(G)\neq {\rm Aut}_f(G)$, the above argument still works
with appropriate modifications.
\end{proof}

\begin{remark}
We will give an alternative proof of Proposition \ref{sink} in
\cite{Xu}. As pointed out by the referee, Proposition \ref{sink}
should also follow from the path integral formula by Douglas and
Klevtsov \cite{DK}.
\end{remark}

\begin{corollary} \label{conn}
If $G\in \mathcal G^{ss}$ is connected but not strongly connected, then
$z(G) = 0$.
\end{corollary}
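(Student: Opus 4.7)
The plan is to extend Proposition \ref{sink} by joint induction on the weight $w(G)$, viewing Proposition \ref{sink} and Corollary \ref{conn} as a single assertion proved together. A careful reading of the proof of Proposition \ref{sink} reveals that the stability hypothesis on $G$ is invoked in exactly one place, namely to exclude Case (ii) of Definition \ref{config}---where a new edge of a configuration acts on an edge of $G_1$ or $G_2$ and creates an intermediate vertex of total degree three. Every other ingredient (the existence of a semistable sink $C$, the subgroup ${\rm Aut}_f(G)$ of automorphisms fixing the connecting edges $e_1,\ldots,e_k$, the pairing of each $G'$-configuration $\mathcal Z'$ with the family $\{S(\mathcal Z',\mathcal Z'')\}_{\mathcal Z''\in C\text{-config}}$, and the cancellation identity \eqref{eqkey}) is a purely combinatorial statement about sinks and their configurations, and applies verbatim when $G$ is only semistable.

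Concretely, I would set up the same pairing as in Proposition \ref{sink}: pick a sink $C\in\mathcal G^{ss}$ of $G$, let $G'$ be the complementary induced subgraph, and group the $G$-configurations $\mathcal Z=(L,G_1,G_2)$ by the induced $G'$-configuration $\mathcal Z'=(L',G_1',G_2')$. Within each group, the contribution from the unique configuration that absorbs $C$ entirely into the sink $G_2'+C$ is cancelled against the contributions from $\{S(\mathcal Z',\mathcal Z'')\}_{\mathcal Z''\in C\text{-config}}$ via \eqref{eqkey}. The auxiliary cases with $G_2'\neq\emptyset$ are handled exactly as before: if $G_2'+C$ is connected but not strongly connected, its weight is strictly less than $w(G)$ and $z(G_2'+C)=0$ by the induction hypothesis; if $G_2'$ and $C$ are effectively disjoint, Proposition \ref{disjoint} provides the factorization needed for cancellation.

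The one additional piece of work is to incorporate Case (ii) of Definition \ref{config} into the same pairing. The key observation is that splitting an edge of $G_1$ or $G_2$ is a purely local modification on those subgraphs and does not interact with how the edges $e_1,\ldots,e_k$ attach to the vertices of $C$. Consequently, if we enlarge the set of $G'$-configurations to include such edge-splittings of $G_1'$ and $G_2'$, the pairing and the cancellation identity \eqref{eqkey} go through verbatim, since both sides acquire identical local factors from the splittings. The main obstacle, and the only genuinely new verification beyond the stable case, is the automorphism bookkeeping when several new edges split different edges of $G_1'$ or $G_2'$: one must check that the extra local factors introduced in $|{\rm Aut}_f(\mathcal Z')|$ and in $|{\rm Aut}_f(p(\mathcal Z''))|$ match on both sides of \eqref{eqkey}. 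Once this local check is carried out, the joint induction closes and Corollary \ref{conn} follows.
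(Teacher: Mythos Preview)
Your approach is genuinely different from the paper's, and the paper's route is worth knowing because it sidesteps exactly the difficulties you flag. The paper does \emph{not} re-run the cancellation argument of Proposition~\ref{sink} for semistable graphs. Instead it observes that, as Weyl invariants, one has the identity of universal $g$-polynomials
\[
\sum_{G\in\mathcal G^{ss}(k)} z(G)\,G \;=\; \sum_{H\in\mathcal G(k)} z(H)\,D(H),
\]
where $D(H)$ is obtained by applying Lemma~\ref{covar} to every vertex of the stable graph $H$. (This holds because both sides are Weyl invariants that agree at the centre of any normal coordinate system, and distinct semistable graphs are linearly independent as universal polynomials.) Since by Proposition~\ref{sink} only strongly connected stable $H$ contribute, and since replacing a vertex of a strongly connected $H$ by an indecomposable admissible tree yields again a strongly connected semistable graph, every semistable graph appearing on the right is strongly connected. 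Hence $z(G)=0$ for any connected, non-strongly-connected $G\in\mathcal G^{ss}$. No configuration bookkeeping is needed.

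Your direct extension, by contrast, runs into a structural obstacle that is more serious than the automorphism bookkeeping you mention. In the stable case, the sink $C$ is automatically semistable because $\deg^+_C(v)=\deg^+_G(v)\ge 2$ for every $v\in C$; this is what makes $z(C)$ meaningful and allows \eqref{cont1} to be written down. For a merely semistable $G$, a vertex $v$ of $C$ can have $\deg^+_G(v)=1$ and $\deg^-_C(v)=1$, so $C$ itself need not lie in $\mathcal G^{ss}$ and the very configuration $(L',G_1',C)$ that anchors the cancellation may fail to exist. In the same vein, your ``locality'' claim is not quite right: an edge $e_i$ from $G'$ to $C$ can itself be the new edge producing a Case~(ii) split at its head in $C$, so the splitting is not confined to $G_1'$ or $G_2'$ and genuinely interacts with how the $e_i$ attach. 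These issues are not obviously fatal, but handling them requires substantially more than the local automorphism check you propose; the paper's $D$-operator argument avoids them entirely.
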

\begin{proof}
Given any stable graph $H\in \mathcal G$, if we apply Lemma \ref{covar} to each vertices of $H$, we get a sum of
semistable graphs of the same weight. Moreover, it is not difficult to see that if $H$ is strongly connected, all these
semistable graphs are also strongly connected. So we conclude the proof.
\end{proof}

\begin{example}
Let us illustrate the above proof by considering the following
semistalbe graph $G$ in Figure \ref{figzero}, which is only weakly
connected.
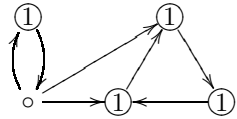
\begin{figure}[h]
$\xymatrix@C=3mm@R=7mm{
            *+[o][F-]{1}   \ar@/^0.5pc/[d]
                 & & &*+[o][F-]{1}  \ar[dr]&  \\
             \circ\ar[urrr]  \ar@/^0.5pc/[u] \ar[rr]
                & & *+[o][F-]{1}\ar[ur] && *+[o][F-]{1}   \ar[ll]     } $
                \caption{A semistable graph $G\in \mathcal G^{ss}(6)$}
                \label{figzero}
\end{figure}
Let $C$ be its unique sink and $G'$ be its unique source. Then we
have $|{\rm Aut}(C)|=3$ and $|{\rm Aut}_f(C)|=1$. For all
$G$-configurations, we can check that the two sides of the equation
\eqref{eqkey} match, their values are listed in Table \ref{tb1}.
\begin{table}[h]
\caption{$G$-configurations}\label{tb1}
\begin{tabular}{|c|c|c|c|c|c|}
\hline \backslashbox{$L'$}{$L''$}
 & $\emptyset$ & $[1]$ & $[1^2]$ & $[1^3]$ & $[3]$
\\
\hline $\emptyset$ & 1 & 3 & 3 & 1 & 1\\
\hline $[1]$       & 1 & 3 & 3 & 1 & 1\\
\hline $[2]$       & 1 & 3 & 3 & 1 & 1\\
\hline
\end{tabular}
\end{table}
\end{example}

The following theorem is called the coefficient theorem (see Theorem
1.2 in \cite{CDS}) from the spectral graph theory.
\begin{theorem} \label{cds}
Let $P_G (\lambda) = \lambda^n + c_1\lambda^{n-1} + \dots + c_n$
 be
the characteristic polynomial of a digraph $G$ with $n$ vertices.
Then for each $i = 1,\dots,n$,
$$c_i = \sum_{L\in\mathcal L_i} (-1)^{p(L)}, $$ where $\mathcal L_i$ is the set of all
linear directed subgraphs $L$ of $G$ with exactly $i$ vertices;
$p(L)$ denotes the number of components of $L$.
\end{theorem}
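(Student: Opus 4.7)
The plan is to derive the Coefficient Theorem from the Leibniz expansion of the determinant of the adjacency matrix. The starting point is the standard identity
\begin{equation*}
P_G(\lambda) = \det(\lambda I - A) = \sum_{i=0}^n (-1)^i \lambda^{n-i} \sum_{\substack{S\subset V \\ |S|=i}} \det(A_S),
\end{equation*}
where $A_S$ denotes the principal submatrix of $A$ indexed by the vertex subset $S$. Reading off the coefficient of $\lambda^{n-i}$ gives
\begin{equation*}
c_i = (-1)^i \sum_{|S|=i} \det(A_S).
\end{equation*}

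Next I would expand each principal minor using the Leibniz formula:
\begin{equation*}
\det(A_S) = \sum_{\sigma\in\mathrm{Sym}(S)} \mathrm{sgn}(\sigma) \prod_{j\in S} A_{j,\sigma(j)}.
\end{equation*}
The key combinatorial observation is that for a multidigraph, the product $\prod_{j\in S} A_{j,\sigma(j)}$ counts precisely the number of ways to choose, for each $j\in S$, an edge from $j$ to $\sigma(j)$. Such a choice is exactly a linear directed subgraph of $G$ on vertex set $S$, and the cycle decomposition of the permutation $\sigma$ corresponds bijectively to the connected components of the resulting linear subgraph $L$: a $k$-cycle of $\sigma$ yields a simple directed $k$-cycle in $L$ (with $k=1$ giving a loop). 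In particular $p(L)=c(\sigma)$, the number of cycles of $\sigma$.

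Using the standard identity $\mathrm{sgn}(\sigma)=(-1)^{|S|-c(\sigma)}$, I can rewrite
\begin{equation*}
\det(A_S) = \sum_{L\in\mathcal L_i,\, V(L)=S} (-1)^{|S|-p(L)} = (-1)^i \sum_{L\in\mathcal L_i,\, V(L)=S} (-1)^{p(L)}.
\end{equation*}
Summing over all $i$-subsets $S$ and combining with the formula for $c_i$ yields
\begin{equation*}
c_i = (-1)^i \cdot (-1)^i \sum_{L\in\mathcal L_i} (-1)^{p(L)} = \sum_{L\in\mathcal L_i} (-1)^{p(L)},
\end{equation*}
which is the desired identity. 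The proof is essentially a bookkeeping exercise; the only point requiring mild care is the correspondence between permutations and linear subgraphs in the presence of loops and multi-edges, which is the genuine content but presents no real obstacle once the adjacency matrix is interpreted as recording edge multiplicities.
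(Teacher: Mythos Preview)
Your proof is correct and is precisely the standard argument for the Coefficient Theorem. Note, however, that the paper does not actually prove this statement: it is quoted without proof as a classical result from spectral graph theory, with a reference to Cvetkovi\'c--Doob--Sachs (Theorem~1.2 in \cite{CDS}). So there is no ``paper's own proof'' to compare against; what you have supplied is exactly the textbook derivation via the principal-minor expansion of $\det(\lambda I-A)$ together with the bijection between permutations on a vertex subset and linear directed subgraphs supported on that subset.
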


\begin{proposition} \label{strong}
If $G=(V,E)\in \mathcal G^{ss}$ is strongly connected, then
\begin{equation} \label{eqstrong}
 z(G) = (-1)^{|V|+1}\frac{\det(I-A)}{|{\rm Aut}(G)|}.
\end{equation}

\end{proposition}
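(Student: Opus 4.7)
The plan is to combine Lemma \ref{single2} with the Coefficient Theorem (Theorem \ref{cds}) via a routine orbit--stabilizer bookkeeping. Starting from Lemma \ref{single2}, we have
$$z(G) = (-1)^{|V|+1}\sum_{L\in\mathcal L/\sim}\frac{(-1)^{p(L)}}{|{\rm Aut}_G(L)|},$$
where the sum ranges over equivalence classes of linear directed subgraphs of $G$ (including the empty subgraph) under the action of ${\rm Aut}(G)$, and ${\rm Aut}_G(L)$ is the stabilizer of $L$ in ${\rm Aut}(G)$.

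The key step is to unfold the sum over equivalence classes into a sum over the full set $\mathcal L$ of linear subgraphs. For any equivalence class $[L]$, the orbit of $L$ under ${\rm Aut}(G)$ has cardinality $|{\rm Aut}(G)|/|{\rm Aut}_G(L)|$ by the orbit--stabilizer theorem. Since the number of components $p(L)$ is a graph invariant, it is constant on each orbit. Therefore
$$\sum_{L\in\mathcal L/\sim}\frac{(-1)^{p(L)}}{|{\rm Aut}_G(L)|} \;=\; \frac{1}{|{\rm Aut}(G)|}\sum_{L\in\mathcal L}(-1)^{p(L)}.$$

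To evaluate the unfolded sum, I will partition $\mathcal L$ by the number of vertices $i = 0, 1, \dots, |V|$ and apply Theorem \ref{cds}. Writing the characteristic polynomial as $P_G(\lambda) = \lambda^{|V|} + c_1\lambda^{|V|-1} + \cdots + c_{|V|}$, the empty subgraph contributes $1$, and for each $i \geq 1$ the subgraphs in $\mathcal L_i$ contribute exactly $c_i$. Hence
$$\sum_{L\in\mathcal L}(-1)^{p(L)} \;=\; 1 + c_1 + c_2 + \cdots + c_{|V|} \;=\; P_G(1) \;=\; \det(I-A).$$
Substituting this back into the expression for $z(G)$ yields the claimed formula \eqref{eqstrong}.

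I do not anticipate a genuine obstacle: the nontrivial input (the interpretation of $\det(I-A)$ as a signed count of linear subgraphs) is handed to us by the Coefficient Theorem, and the transition from classes modulo ${\rm Aut}(G)$ to the unfolded sum is a standard orbit--stabilizer manipulation. The only points requiring minor care are verifying that the empty subgraph is legitimately included on both sides (so that the constant term $1$ of $P_G$ is accounted for) and checking that the sign $(-1)^{|V|+1}$ comes out correctly from Lemma \ref{single2}.
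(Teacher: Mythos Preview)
Your proof is correct and follows essentially the same route as the paper: both start from Lemma~\ref{single2}, use orbit--stabilizer to pass between $\mathcal L/\sim$ and $\mathcal L$, and invoke the Coefficient Theorem (Theorem~\ref{cds}) to identify $1+c_1+\cdots+c_{|V|}$ with $\det(I-A)$. The only cosmetic difference is that the paper runs the computation starting from $\det(I-A)$ and arriving at $z(G)$, whereas you run it in the opposite direction.
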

\begin{proof}
Let $\mathcal L$ be the set of all linear directed subgraphs of $G$.
We have the natural action of ${\rm Aut}(G)$ on $\mathcal L$, given
by $h\cdot L=h(L)$ for $h\in {\rm Aut}(G), L\in\mathcal L$. From our
discussion at the end of Section \ref{secgraph}, the orbits of this
action are just $\mathcal L/\sim$ and the isotropy group at
$L\in\mathcal L$ is just ${\rm Aut}_G(L)$. By Theorem \ref{cds} and
Lemma \ref{single2}, we have
\begin{align*}
\det(I-A)=1+\sum_{i=1}^n c_i &=\sum_{L\in\mathcal
L/\sim}(-1)^{p(L)}|{\rm Aut}(G)\cdot
L|\\
&=|{\rm Aut}(G)|\sum_{L\in\mathcal L/\sim}\frac{(-1)^{p(L)}}{|{\rm
Aut}_G(L)|}\\
&=|{\rm Aut}(G)| (-1)^{|V|+1}z(G),
\end{align*}
which is just the equation \eqref{eqstrong}.
\end{proof}

\begin{corollary} \label{eigenvalue}
If $G \in \mathcal G^{ss}$ is strongly connected, then $z(G) = 0$ if
and only if $1$ is an eigenvalue of the characteristic polynomial of
$G$.
\end{corollary}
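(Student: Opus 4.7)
The plan is essentially to read off the corollary directly from Proposition \ref{strong}. Since $G$ is strongly connected, that proposition gives the closed formula
\[
z(G) = (-1)^{|V|+1}\frac{\det(I-A)}{|\mathrm{Aut}(G)|}.
\]
The prefactor $(-1)^{|V|+1}$ is a nonzero sign, and $|\mathrm{Aut}(G)|$ is a positive integer, so neither can cause $z(G)$ to vanish. Therefore $z(G) = 0$ if and only if $\det(I-A) = 0$.

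Next I would observe that $\det(I-A) = 0$ is exactly the statement that the matrix $I - A$ is singular, which by the standard characterization of eigenvalues is equivalent to $1$ being an eigenvalue of $A$, i.e.\ to $1$ being a root of the characteristic polynomial $P_G(\lambda) = \det(\lambda I - A)$. Indeed, $P_G(1) = \det(I - A)$ up to the sign $(-1)^{|V|}$ coming from $\det(I - A) = (-1)^{|V|}\det(A - I)$, which does not affect whether the quantity vanishes.

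Combining these two equivalences yields
\[
z(G) = 0 \iff \det(I - A) = 0 \iff P_G(1) = 0 \iff 1 \text{ is an eigenvalue of } A,
\]
which is the corollary. There is no real obstacle here; the work has already been done in Proposition \ref{strong}, and this corollary is a bookkeeping remark isolating the geometric/combinatorial content of that formula. The only subtlety worth a sentence is clarifying that ``eigenvalue of the characteristic polynomial of $G$'' refers to a root of $P_G(\lambda)$, equivalently an eigenvalue of the adjacency matrix $A(G)$, which is the natural object given how $A$ enters the formula in Proposition \ref{strong}.
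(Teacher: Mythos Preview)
Your proposal is correct and takes essentially the same approach as the paper, which simply writes ``It follows from Proposition \ref{strong}.'' You have merely spelled out the immediate linear-algebra observation that the nonzero factors $(-1)^{|V|+1}$ and $|\mathrm{Aut}(G)|$ leave the vanishing of $z(G)$ equivalent to $\det(I-A)=0$, i.e.\ to $1$ being a root of $P_G(\lambda)$.
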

\begin{proof}
It follows from Proposition \ref{strong}.
\end{proof}

Propositions \ref{disjoint}, \ref{sink} and
\ref{strong} together imply Theorem \ref{main}.

\begin{proposition} \label{onevertex} If $G=\xymatrix{*+[o][F-]{k}}$, the digraph with
one vertex and $k\geq2$ self-loops, we have
\begin{equation}
z(G)=-\frac{k-1}{k!}.
\end{equation}
\end{proposition}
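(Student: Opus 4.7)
The proposition is a direct application of Theorem \ref{main} iii), so the plan is to verify that its hypotheses apply and to compute the two ingredients (the adjacency determinant and the automorphism group). First, I would observe that a digraph with a single vertex is trivially strongly connected, so part iii) of Theorem \ref{main} is the applicable clause, giving
\[
z(G) \;=\; -\frac{\det(A-I)}{|{\rm Aut}(G)|}.
\]

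Next, I would identify the two pieces. Since $G$ has a single vertex carrying $k$ directed self-loops, its adjacency matrix is the $1\times 1$ matrix $A=(k)$, whence $\det(A-I)=k-1$. For the automorphism group, note that any automorphism of a multidigraph is a pair consisting of a vertex bijection and an edge bijection compatible with the incidence relations; with one vertex, the vertex bijection is trivial, and the edge bijection is any permutation of the $k$ self-loops (each loop is an independent directed edge from the vertex to itself, and there is no additional reversal to consider because the edges are directed). Thus ${\rm Aut}(G)\cong S_k$ and $|{\rm Aut}(G)|=k!$. Plugging in yields
\[
z(G) \;=\; -\frac{k-1}{k!},
\]
as claimed.

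As a sanity check, one can also read the result off Lemma \ref{single2}: the linear subgraphs of $G$, up to the $S_k$-action, fall into two equivalence classes, the empty subgraph (with stabilizer of order $k!$ and $p=0$) and a single vertex decorated by one loop (with stabilizer of order $(k-1)!$ and $p=1$). Summing $(-1)^{|V|+1+p(L)}/|{\rm Aut}_G(L)|$ gives $\tfrac{1}{k!}-\tfrac{1}{(k-1)!} = -\tfrac{k-1}{k!}$, matching the closed formula. There is no real obstacle here; the only point requiring a brief justification is the size of ${\rm Aut}(G)$, which follows because the $k$ self-loops are indistinguishable and may be freely permuted.
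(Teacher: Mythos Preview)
Your proof is correct and coincides with one of the two approaches the paper itself indicates: the paper's proof simply says ``It follows from Lemma \ref{coef1} or Proposition \ref{strong},'' and your application of Theorem \ref{main} iii) (which is Proposition \ref{strong}) with $A=(k)$, $\det(A-I)=k-1$, and $|{\rm Aut}(G)|=k!$ is exactly the second of these. Your sanity check via Lemma \ref{single2} is also correct and in fact unwinds the proof of Proposition \ref{strong} in this special case; the only alternative the paper mentions that you did not pursue is Lemma \ref{coef1}, which identifies the one-vertex graph with $k$ loops as the graph carrying the leading term $\Delta^{k-2}\rho$ of $a_{k-1}$ and reads off the coefficient directly.
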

\begin{proof}
It follows from Lemma \ref{coef1} or Proposition \ref{strong}.
\end{proof}

\begin{proposition} \label{twovertex}
Let  $G=\xymatrix{
        *+[o][F-]{m} \ar@/^/[r]^i
         &
        *+[o][F-]{n} \ar@/^/[l]^j}
$ be a strongly connected stable graph with two vertices, namely
$ij\neq0$. Then we have
\begin{equation}
z(G) = \begin{cases} \dfrac{ij-(1-m)(1-n)}{2\cdot m!\, n!\, i!\, j!}
&\mbox{if } i=j \text{ and } m=n;
\\ \dfrac{ij-(1-m)(1-n)}{ m!\, n!\, i!\, j!} & \mbox{otherwise} .
\end{cases}
\end{equation}
\end{proposition}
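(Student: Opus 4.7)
The plan is to apply Theorem \ref{main}(iii) directly, since $G$ is assumed to be strongly connected. The two steps are: (a) evaluate $\det(A-I)$, and (b) compute $|\mathrm{Aut}(G)|$, distinguishing whether the two vertices are interchangeable.

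For step (a), the adjacency matrix of $G$ is
\begin{equation*}
A = \begin{pmatrix} m & i \\ j & n \end{pmatrix},
\end{equation*}
so a direct computation gives $\det(A-I) = (m-1)(n-1) - ij = (1-m)(1-n) - ij$. Hence, by Theorem \ref{main}(iii),
\begin{equation*}
z(G) = -\frac{\det(A-I)}{|\mathrm{Aut}(G)|} = \frac{ij - (1-m)(1-n)}{|\mathrm{Aut}(G)|}.
\end{equation*}

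For step (b), an automorphism of the multidigraph $G$ must act on the vertex set $\{v_1,v_2\}$ and then permute parallel edges in each of the four edge-classes (the $m$ loops at $v_1$, the $n$ loops at $v_2$, the $i$ edges from $v_1$ to $v_2$, and the $j$ edges from $v_2$ to $v_1$). If the automorphism fixes both vertices, it contributes the factor $m!\,n!\,i!\,j!$ coming from independent permutations of parallel edges in each class. A vertex-swapping automorphism exists precisely when the swap is compatible with the edge multiplicities, i.e.\ when $m=n$ and $i=j$; in that case it doubles the order of the automorphism group. Therefore
\begin{equation*}
|\mathrm{Aut}(G)| =
\begin{cases}
2\cdot m!\,n!\,i!\,j! & \text{if } m=n \text{ and } i=j,\\
m!\,n!\,i!\,j! & \text{otherwise},
\end{cases}
\end{equation*}
and substituting into the formula above yields the stated expression for $z(G)$.

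The only subtle step is the automorphism-group count, but because $G$ has just two vertices there is only one potentially nontrivial vertex permutation to consider, making the case analysis entirely elementary. No further combinatorial input is needed beyond Theorem \ref{main}(iii).
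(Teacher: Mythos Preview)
Your proof is correct and follows exactly the paper's approach: the paper's own proof simply reads ``The formula follows from Proposition \ref{strong},'' which is the same content as Theorem \ref{main}(iii), and you have merely spelled out the determinant and automorphism-group computations that the paper leaves implicit.
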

\begin{proof}
The formula follows from Proposition \ref{strong}.
\end{proof}

\begin{example} \label{tyza2}
Fix a normal coordinate around $x\in M$. By Proposition
\ref{onevertex}, we get the well known
\begin{equation}\label{eqa1}
a_1=-\frac{1}{2}\left[\xymatrix{*+[o][F-]{2}}\right]=-\frac{1}{2}g_{i\bar
i j\bar j}=\frac{1}{2}R_{i\bar i j\bar j}=\frac{1}{2}\rho,
\end{equation}
where $(i,\bar i),\,(j, \bar j)$ are paired indices to be
contracted.

By Proposition \ref{onevertex}, Proposition \ref{twovertex} and
Proposition \ref{disjoint}, we have
\begin{equation}\label{eqa2}
a_2=-\frac{1}{3}\left[\xymatrix{*+[o][F-]{3}}\right]+\frac{1}{2}\left[\xymatrix{
        *+[o][F-]{1} \ar@/^/[r]^1
         &
        *+[o][F-]{1} \ar@/^/[l]^1} \right ]+ \frac{3}{8}
         \left[\xymatrix{ \circ  \ar@/^/[r]^2 & \circ \ar@/^/[l]^2
         }\right]+\frac{1}{8}\left[\xymatrix{*+[o][F-]{2}}\mid \xymatrix{*+[o][F-]{2}}\right].
\end{equation}
Written in terms of derivatives of the K\"ahler metric $g_{i\bar
j}$, we get
\begin{equation} \label{eqb11}
a_2=-\frac{1}{3}g_{i\bar i j\bar j k\bar k} +\frac{1}{2} g_{i\bar i
k\bar l}g_{j\bar j l\bar k}+\frac{3}{8}g_{i\bar j k\bar l}g_{j\bar i
l\bar k}+\frac{1}{8}g_{i\bar i j\bar j}g_{k\bar k l\bar l}.
\end{equation}
It is understood that $(i,\bar i),\,(j, \bar j),\,(k,\bar
k),\,(l,\bar l)$ are paired indices to be contracted.

Apply the operator $D$ (defined in Remark \ref{rm1}) to the
right-hand side of \eqref{eqb11} and use the identities
$$D(g_{i\bar j k\bar l})=-R_{i\bar j k\bar l},$$
\begin{align*}
D(g_{i\bar i j\bar j k\bar k})&=-R_{i\bar i j \bar j; k\bar
k}+R_{k\bar i s\bar j}R_{i\bar k j\bar s}+R_{j\bar j s\bar
i}R_{k\bar k i\bar s}+R_{i\bar i s\bar j}R_{k\bar k j\bar s}\\
&=-\Delta \rho+|R|^2+2|Ric|^2,
\end{align*}
we arrived at
\begin{equation}
a_2=\frac{1}{3}\Delta
\rho+\frac{1}{24}|R|^2-\frac{1}{6}|Ric|^2+\frac{1}{8}\rho^2,
\end{equation}
which is the same as the function $a_2(x)$ computed in \cite{Lu}. We
computed $a_3$ in Appendix \ref{apthree} and we can use tables at
Appendix \ref{apfour} to compute $a_4$.
\end{example}

\vskip 30pt

\section{Computations of $z(G)$}
In this section, we derive some explicit formulae of $z(G)$ .

\begin{figure}[h]
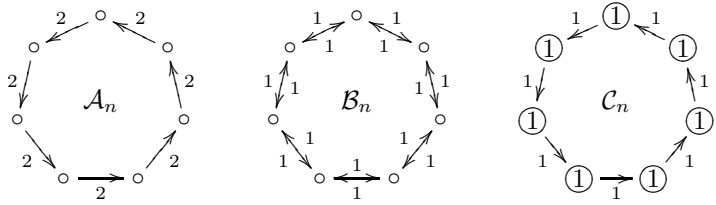
$$
\xy/r2.7pc/: {*{\mathcal A_n}{}\xypolygon7{~><{} ~*{\circ}
~>>{_{2}}}}
\endxy \qquad \xy/r2.7pc/: {*{\mathcal B_n}{}\xypolygon7{~><{@{<->}} ~*{\circ} ~>>{_{1}^{1}}}}
\endxy \qquad \xy/r2.7pc/: {*{\mathcal C_n}{}\xypolygon7{~><{} ~*{\xybox{*{1}*\cir<2mm>{}}}
~>>{_{1}}}}
\endxy$$
\caption{Three types of stable graphs in $\mathcal G_{n,2n}$}
\label{figthree}
\end{figure}

\begin{proposition} Given $n\geq3$, for the three graphs in Figure \ref{figthree}, we have
\begin{align*}
z(\mathcal A_n)&=\frac{(-1)^n(2^n-1)}{2^n
n},\\
z(\mathcal
B_n)&=\frac{(-1)^n}{2n}+\sum_{i=0}^{[n/2]}(-1)^{n+i+1}\sum _{H\in
\mathscr C_{i,n-2i}} \frac{1}{{\rm Aut}(H)}\\&=\begin{cases}0, &
n\equiv 0\\(-1)^n/(2n), & n\equiv \pm 1\\3(-1)^n/(2n), & n\equiv \pm
2
\\2(-1)^n/n, &n\equiv 3 \end{cases}\mod\, 6,\\
z(\mathcal C_n)&=\frac{(-1)^n}{n}.
\end{align*}
Here $\mathscr C_{i,j}$ denote the set of $2$-colored undirected
cycle graphs with $i$ black vertices and $j$ white vertices and
${\rm Aut}(H)$ is the group of color-preserving automorphisms. Note the first equation for $z(\mathcal
B_n)$ holds only when $n\geq5$.
\end{proposition}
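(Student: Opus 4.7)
\emph{Proof plan.} All three digraphs are strongly connected, so by Proposition~\ref{strong} it suffices to compute $\det(I-A)$ and $|\mathrm{Aut}(G)|$ for each graph and assemble $z(G)=(-1)^{n+1}\det(I-A)/|\mathrm{Aut}(G)|$. The three adjacency matrices are polynomials in the cyclic permutation matrix $C$ whose eigenvalues are the $n$th roots of unity $\omega^k=e^{2\pi ik/n}$, so all the determinants are read off from the cyclic spectrum.

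I would dispose of the automorphism counts by inspection. The graph $\mathcal A_n$ is a directed $n$-cycle whose arrows all have multiplicity two, so $\mathrm{Aut}(\mathcal A_n)$ is the semidirect product of the rotation group $\mathbb Z/n$ with the $2^n$ independent swaps of parallel pairs of edges, giving $|\mathrm{Aut}(\mathcal A_n)|=n\cdot 2^n$. The graph $\mathcal C_n$ has only simple edges and single loops, so only rotations act: $|\mathrm{Aut}(\mathcal C_n)|=n$. The graph $\mathcal B_n$ is $C_n$ with each edge doubled into an antiparallel pair, and reflections of $C_n$ induce valid automorphisms because they reverse cyclic order but also swap forward and backward arrows within each pair, so $|\mathrm{Aut}(\mathcal B_n)|=2n$.

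Next I would compute $\det(I-2C)=\prod_k(1-2\omega^k)=1-2^n$ by substituting $x=1/2$ into $\prod_k(x-\omega^k)=x^n-1$, and $\det(I-(I+C))=\det(-C)=(-1)^n(-1)^{n-1}=-1$ since $C$ realises an $n$-cycle permutation. For $\mathcal B_n$ I would use the factorisation $1-\omega^k-\omega^{-k}=-\omega^{-k}\Phi_6(\omega^k)$ with $\Phi_6(x)=x^2-x+1$, together with the resultant identity
\[
\prod_{k=0}^{n-1}\Phi_6(\omega^k)=\mathrm{Res}(\Phi_6,\,x^n-1)=(e^{i\pi n/3}-1)(e^{-i\pi n/3}-1)=2-2\cos(\pi n/3)
\]
and the auxiliary $\prod_k\omega^{-k}=(-1)^{n-1}$, yielding $\det(I-A_{\mathcal B_n})=2\cos(\pi n/3)-2$. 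Applying Proposition~\ref{strong} gives $z(\mathcal A_n)=(-1)^n(2^n-1)/(n2^n)$, $z(\mathcal C_n)=(-1)^n/n$, and $z(\mathcal B_n)=(-1)^n(1-\cos(\pi n/3))/n$; the four cases of the second expression for $z(\mathcal B_n)$ then follow directly from $\cos(\pi n/3)\in\{1,\pm 1/2,-1\}$ according to $n\bmod 6$.

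For the necklace form of $z(\mathcal B_n)$ I would apply Lemma~\ref{single2}. Every simple directed cycle in $\mathcal B_n$ is either one of the $n$ antiparallel $2$-cycles sitting on an individual edge of $C_n$ or one of the two length-$n$ cycles obtained by orienting $C_n$ consistently, since any other closed walk on the underlying cycle must revisit a vertex. A linear subgraph is therefore a matching $M$ of size $i$ in $C_n$ (contributing $i$ vertex-disjoint $2$-cycles on $2i$ vertices) for some $0\le i\le\lfloor n/2\rfloor$, or a full $n$-cycle. Contracting each matched edge of $C_n$ to a single black bead and leaving unmatched vertices as white beads sets up a $D_n$-equivariant bijection between size-$i$ matchings in $C_n$ and $(n-i)$-bead necklaces with $i$ blacks and $n-2i$ whites, and a short verification shows that the $D_n$-stabiliser of $M$ equals the colour-preserving automorphism group of the corresponding necklace $H\in\mathscr C_{i,n-2i}$. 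The two full cycles form a single $D_n$-orbit of stabiliser order $n$, contributing $(-1)^n/n$; combining this with the $i=0$ (empty matching) contribution assembles the stated sum. The main subtlety will be this $D_n$-equivariance and the exact matching of stabiliser orders with $|\mathrm{Aut}(H)|$, which is also what forces the hypothesis $n\ge 5$ in the first displayed identity; once this is in hand and the cyclotomic identity for $\det(I-A_{\mathcal B_n})$ is established, the remaining steps are routine.
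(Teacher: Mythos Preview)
Your proposal is correct and follows essentially the same strategy as the paper: both arguments rest on Proposition~\ref{strong} for the closed forms and on Lemma~\ref{single2} for the necklace expression of $z(\mathcal B_n)$. The only noteworthy difference is that for $\mathcal A_n$ the paper works directly with Lemma~\ref{single2}, observing that the only linear subgraphs are $\emptyset$ and a single $n$-cycle and summing their two contributions, whereas you instead evaluate the circulant determinant $\det(I-2C)=1-2^n$ via the eigenvalues; your route is slightly slicker and avoids the orbit/stabiliser bookkeeping. For $\mathcal B_n$ you supply the explicit cyclotomic computation $\det(I-C-C^{-1})=2\cos(\pi n/3)-2$ via the resultant with $\Phi_6$, which the paper leaves as ``a computation of $\det(I-A(\mathcal B_n))$''; this is a nice addition. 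One small bookkeeping point: when you ``combine the $n$-cycle contribution $(-1)^n/n$ with the $i=0$ term,'' note that $(-1)^n/n+(-1)^{n+1}/(2n)=(-1)^n/(2n)$ is the constant out front while the displayed sum then effectively runs over $i\ge 1$; you may want to spell this out so the reader sees why the stated formula (with the sum starting at $i=0$) agrees with your count.
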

\begin{proof} Now we will compute $z(G)$ for $G=\mathcal A_n, \mathcal B_n, \mathcal C_n$.
Since these graphs are strongly connected, all $G$-configurations
are simply linear subgraphs $L$ of $G$. Two linear subgraphs $L_1$
and $L_2$ define the same $G$-configuration if there is an
automorphism $\varphi$ of $G$ such that $\varphi(L_1)=L_2$.

i) When  $G=\mathcal A_n$, it is easy to see that there are only two
$\mathcal A_n$-configurations given by empty graph
$\emptyset$ and a $n$-cycle $[n]$, respectively. By \eqref{eqzstrong}, we have
\begin{align*}
z(\mathcal A_n)=\frac{(-1)^{n+1}}{|{\rm Aut}(\mathcal A_n)|}+\frac{(-1)^n}{|{\rm Aut}_{\mathcal A_n}([n])|}
&=\frac{(-1)^{n+1}}{2^n n}+\frac{(-1)^n}{n}\\
&=\frac{(-1)^n(2^n-1)}{2^n n}.
\end{align*}

ii) When $G=\mathcal B_n$, the contribution to $z(\mathcal B_n)$ by empty graph is
\begin{equation*}
\frac{(-1)^{n+1}}{{\rm Aut}(\mathcal B_n)}=\frac{(-1)^{n+1}}{2n},
\end{equation*}
where ${\rm Aut}(\mathcal B_n)$ is the dihedral group.

The contribution to $z(\mathcal B_n)$ by a $n$-cycle $[n]$ is
\begin{equation*}
\frac{(-1)^n}{|{\rm Aut}_{\mathcal B_n}([n])|}=\frac{(-1)^n}{n}.
\end{equation*}

All other $\mathcal B_n$-configurations are given by disjoint cycles of
length two
\begin{equation*}
[2^i], \quad 0\leq i\leq [n/2],
\end{equation*}
which are in one-to-one
correspondence with the set $\mathscr C_{i,n-2i}$ consisting of
$2$-colored undirected cycle graphs with $i$ black vertices and
$n-2i$ white vertices. Their contributions to $z(\mathcal B_n)$ are
equal to
\begin{equation*}
(-1)^{n+i+1}\sum _{H\in \mathscr C_{i,n-2i}} \frac{1}{{\rm Aut}(H)},
\end{equation*}
where ${\rm Aut}(H)$ is the group of color-preserving automorphism. Summing up, we get the first
equation for $z(\mathcal B_n)$. The second equation follows from a
computation of $\det(I-A(\mathcal B_n))$.

iii) When $G=\mathcal C_n$, we use $\det(I-A(\mathcal
C_n))=-1$.
\end{proof}

\begin{proposition} Let $n,m\geq2$.
\begin{enumerate}
\item[i)]
Let $K_n$ be the complete digraph with $n$ vertices and every pair
of vertices is an edge, including a loop at each vertex. Namely
every entry of the adjacency matrix of $K_n$ is $1$. Then
\begin{equation*}
z(K_n)=(-1)^{n}(n-1)/n!.
\end{equation*}
\item[ii)]
Let $D_n$ be the de Bruijn graph of degree $n$. The graph $D_n$ has
$2^{n-1}$ vertices, which are the sequences of $0$'s and $1$'s with
length $n-1$. There is an edge from $a_1a_2\dots a_{n-1}$ to
$b_1b_2\dots b_{n-1}$ if and only if $a_2a_3\dots
a_{n-1}=b_1b_2\dots b_{n-2}$. Then
\begin{equation*}
 z(D_n)=1/2.
\end{equation*}
\item[iii)] Let $K_{m,n}(V_1,V_2)$ be the complete bipartite digraph with $|V_1|=m,|V_2|=n$. Every vertex
in $V_1$ has an arrow to every vertex in $V_2$ and vice versa. Then
$$z(K_{m,n})=\frac{mn-1}{(1+\delta_{m,n})m!n!},$$
where $\delta_{m,n}=0$ if $m\neq n$ and $\delta_{n,n}=1$.
\end{enumerate}
\end{proposition}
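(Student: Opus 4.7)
The three parts share a common strategy: each of $K_n$, $D_n$, and $K_{m,n}$ is manifestly strongly connected and stable under the stated parameter constraints, so Proposition \ref{strong} applies and reduces the determination of $z(G)$ to (a) a spectral computation of $\det(I-A(G))$ and (b) the identification of $|\mathrm{Aut}(G)|$.

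For $K_n$, the adjacency matrix is the $n\times n$ all-ones matrix $J_n$, with spectrum $\{n\}\cup\{0\}^{n-1}$; thus $\det(I-J_n) = 1-n$, and $|\mathrm{Aut}(K_n)| = |S_n| = n!$. For $K_{m,n}$ the adjacency matrix has the block form $\bigl(\begin{smallmatrix} 0 & J_{m\times n}\\ J_{n\times m} & 0\end{smallmatrix}\bigr)$ of rank $2$, whose nonzero eigenvalues $\pm\sqrt{mn}$ come from the characteristic vectors of the two parts, so $\det(I-A) = 1-mn$; the automorphism group is $S_m\times S_n$ when $m\neq n$ and is augmented by the bipartition swap when $m=n$, yielding $|\mathrm{Aut}(K_{m,n})| = (1+\delta_{m,n})m!n!$. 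Substituting into Proposition \ref{strong} in each case gives the closed form in the statement.

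The de Bruijn case is the most substantive. The cornerstone is the identity $A(D_n)^{n-1} = J_{2^{n-1}}$: starting from any vertex $a_1\cdots a_{n-1}$, a length-$(n-1)$ walk shifts the entire label out and replaces it by the sequence of traversed labels, so there is exactly one such walk to every target vertex. Combined with $AJ = 2J$ (every row sum of $A$ is $2$), this gives $A^n = 2A^{n-1}$; hence the minimal polynomial of $A$ divides $x^{n-1}(x-2)$, and every eigenvalue lies in $\{0,2\}$. The self-loops of $D_n$ occur precisely at constant strings, of which there are exactly two, so $\mathrm{tr}(A) = 2$, forcing the eigenvalue $2$ to have multiplicity $1$. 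Therefore $\det(I-A) = -1$, and since $|V| = 2^{n-1}$ is even for $n\ge 2$ the sign factor $(-1)^{|V|+1}$ equals $-1$. The anticipated main obstacle is establishing $|\mathrm{Aut}(D_n)| = 2$ exactly: bit-complementation is visibly an automorphism of order $2$, so $|\mathrm{Aut}(D_n)|\ge 2$, and I would show the reverse inclusion by noting that the two distinguished self-loops at $00\cdots 0$ and $11\cdots 1$ must be preserved as a set, and that fixing or swapping them determines the image of every other vertex via the unique length-$(n-1)$ walks starting from those two distinguished vertices. Granting this, Proposition \ref{strong} yields $z(D_n) = (-1)(-1)/2 = 1/2$.
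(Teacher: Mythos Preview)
Your proof is correct and follows essentially the same route as the paper's: verify strong connectivity, apply Proposition \ref{strong}, and reduce everything to computing $\det(I-A)$ and $|\mathrm{Aut}(G)|$ for each of the three families. The only difference is level of detail: where the paper simply quotes the characteristic polynomials and automorphism-group orders as well known, you supply self-contained arguments (notably the identity $A(D_n)^{n-1}=J$ and the sketch that bit-complementation generates $\mathrm{Aut}(D_n)$), but the underlying strategy is identical.
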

\begin{proof}
First note that $K_n\in\mathcal G(n^2-n)$, $D_n\in\mathcal
G(2^{n-1})$ and $K_{m,n}\in\mathcal G(2mn-m-n)$ all are strongly connected digraphs. It is well-known that
their characteristic polynomials are respectively
given by
\begin{gather}
\det(\lambda I-A(K_n))=\lambda^{n-1}(\lambda-n),\\
\det(\lambda I-A(D_n))=\lambda^{2^{n-1}-1}(\lambda-2),\\
\det(\lambda I-A(K_{m,n}))=(-1)^{m+n}\lambda^{m+n-2}(\lambda^2-mn).
\end{gather}
We also have $|{\rm Aut}(K_n)|=n!$, $|{\rm Aut}(D_n)|=2$ and $|{\rm Aut}(K_{m,n})|=(1+\delta_{m,n})m!n!$, we conclude
the proof by Proposition \ref{strong}.
\end{proof}

\begin{remark}
The computation of $z(G)$ is reduced to study the spectra and the
automorphism group of $G$. A detailed study of the spectral
properties of graphs can be found at \cite{CDS}, which also contains
a chapter on the relations between spectra of a graph and its
automorphism group.
\end{remark}

Let $U$ be the unit ball of $\mathbb C^N$ and $g_{i\bar j}$ is the
Bergman metric on it,
\begin{equation}
g_{i\bar j}(z)=\frac{(1-|z|^2)\delta_{ij}+\bar z_i
z_j}{(1-|z|^2)^2},
\end{equation}
we know that $U$ is a normal coordinate with center $0$ for
$g_{i\bar j}$ and the curvature is constant
\begin{equation} \label{equ1}
R_{i\bar j k\bar l}=-(g_{i\bar j}g_{k\bar l}+g_{i\bar l}g_{k\bar
j}).
\end{equation}

The weighted reproducing kernel for $(U,g_{i\bar j})$ is
\begin{equation}
K_\beta(z)=\frac{\Gamma(\beta)}{\pi^N\Gamma(\beta-N)}(1-|z|^2)^{-\beta}.
\end{equation}
We know that $K_\beta(z)$ has an asymptotic expansion \cite{Ber,
Eng}
\begin{equation}
K_\beta(z)=(\beta/\pi)^N e^{\beta\Phi(z)} \sum_{k=0}^\infty a_k(z)
\beta^{-k},
\end{equation}
where $a_k(z)$ is the $k$-th coefficient of Tian-Yau-Zelditch
expansion and the potential $\Phi(z)$ is given by $
\Phi(z)=-\log(1-|z|^2)$.

So in the case of unit ball equipped with the Bergman metric, $a_k$
equals the polynomial
\begin{equation}
P_k=\sum _{1\leq i_1< i_2<\dots < i_k \leq N} (-i_1)\cdots(-i_k).
\end{equation}
In particular,
$$P_1=-\frac{N^2}{2}-\frac{N}{2},\qquad P_2=\frac{N^4}{8}+\frac{N^3}{12}-\frac{N^2}{8}-\frac{N}{12}.$$

\begin{lemma} \label{bergball}
Let $g_{i\bar j}$ be the Bergman metric on the unit ball $U$ of
$\mathbb C^N$. Then $g_{i\bar j \alpha_1\alpha_2\dots\alpha_r}(0)$
is nonzero only if the number of barred and unbarred indices in
$\{\alpha_1,\alpha_2,\dots,\alpha_r\}$ are equal. In this case, we
have
\begin{equation}\label{equ2}
g_{i_1\bar j_1 i_2 \bar j_2\dots i_k \bar
j_k}(0)=(k-1)!\sum_{\sigma\in S_k} g_{i_1 \bar j_{\sigma(1)}}g_{i_2
\bar j_{\sigma(2)}}\dots g_{i_k \bar j_{\sigma(k)}}(0).
\end{equation}
\end{lemma}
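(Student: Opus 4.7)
The plan is to differentiate the K\"ahler potential $\Phi(z)=-\log(1-|z|^2)$ directly at the origin using its explicit series expansion
\[
\Phi(z)=\sum_{n\geq 1}\frac{(z\cdot\bar z)^n}{n},\qquad z\cdot\bar z:=\sum_{l=1}^{N}z_l\bar z_l.
\]
Since $g_{i\bar j}=\partial_i\partial_{\bar j}\Phi$, every derivative $g_{i\bar j\alpha_1\ldots\alpha_r}(0)$ is a mixed partial of $\Phi$ at $0$, so the problem reduces to reading off coefficients in this known power series.

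For the vanishing assertion, each monomial in the expansion of $\Phi$ has equally many $z$-factors as $\bar z$-factors; equivalently, $\Phi$ is invariant under the circle action $z\mapsto e^{i\theta}z$ which fixes $0$. Thus for a partial derivative to survive at $z=0$ the total numbers of holomorphic and anti-holomorphic derivatives must agree. Applied to $\partial_i\partial_{\bar j}\partial_{\alpha_1}\cdots\partial_{\alpha_r}\Phi(0)$ and cancelling the fixed pair $(i,\bar j)$, this forces the numbers of barred and unbarred indices among $\{\alpha_1,\dots,\alpha_r\}$ to coincide.

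In the balanced case, only the term $n=k$ in the series for $\Phi$ contributes: smaller $n$ has too few variables, while larger $n$ leaves residual factors of $z$ or $\bar z$ that vanish at $0$. Expanding
\[
(z\cdot\bar z)^k=\sum_{l_1,\dots,l_k}z_{l_1}\bar z_{l_1}\cdots z_{l_k}\bar z_{l_k},
\]
the holomorphic derivatives $\partial_{i_1},\dots,\partial_{i_k}$ act only on the $z$-factors and the anti-holomorphic derivatives $\partial_{\bar j_1},\dots,\partial_{\bar j_k}$ only on the $\bar z$-factors, each group producing a Leibniz sum over permutations $\sigma,\tau\in S_k$ of the form $\sum_\sigma\prod_p\delta_{i_p,l_{\sigma(p)}}$ and its barred analogue. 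Summing over $l_1,\dots,l_k$ and changing variables via $\pi:=\tau^{-1}\sigma$ (each $\pi\in S_k$ realized by exactly $k!$ pairs $(\sigma,\tau)$) collapses the double sum to $k!\sum_{\pi\in S_k}\prod_p\delta_{i_p,j_{\pi(p)}}$. Dividing by the prefactor $1/k$ from $\frac{1}{n}(z\cdot\bar z)^n$ at $n=k$ yields $(k-1)!$, and rewriting $\delta_{i_p,j_{\pi(p)}}=g_{i_p\bar j_{\pi(p)}}(0)$ gives the claimed identity.

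The main obstacle is nothing more than careful bookkeeping of the two permutation groups and the factor $1/k$; there is no genuine analytic difficulty once the reduction to $n=k$ is observed. An alternative, more conceptual route would invoke $U(N)$-invariance: the balanced tensor $g_{i_1\bar j_1\dots i_k\bar j_k}(0)$ is $U(N)$-equivariant, hence a linear combination of products $\prod_p \delta_{i_p,j_{\sigma(p)}}$, and the single common coefficient $(k-1)!$ can be pinned down from a test case such as $i_p=j_p=p$, trading the double sum for an appeal to classical invariant theory.
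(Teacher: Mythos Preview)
Your proof is correct and complete. The direct power-series computation of $\partial_{i_1}\cdots\partial_{i_k}\partial_{\bar j_1}\cdots\partial_{\bar j_k}\Phi(0)$ from $\Phi=-\log(1-|z|^2)=\sum_{n\ge 1}\frac{1}{n}(z\cdot\bar z)^n$ is airtight: the $U(1)$-weight argument forces the balance condition, only $n=k$ survives, and your permutation bookkeeping with $\pi=\tau^{-1}\sigma$ and the resulting $k!/k=(k-1)!$ is accurate.

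This is a genuinely different route from the paper. The paper does \emph{not} expand the potential; instead it works on the metric level using the constant-curvature identity $R_{i\bar jk\bar l}=-(g_{i\bar j}g_{k\bar l}+g_{i\bar l}g_{k\bar j})$, which via \eqref{eqcur1} rewrites as
\[
g_{i_1\bar j_1 i_2\bar j_2}=g_{i_1\bar j_1}g_{i_2\bar j_2}+g_{i_1\bar j_2}g_{i_2\bar j_1}+g^{m\bar n}g_{m\bar j_1\bar j_2}g_{i_1\bar n i_2}.
\]
It then differentiates this relation by $\partial_{i_3\bar j_3}\cdots\partial_{i_k\bar j_k}$ and tracks the coefficient of the single monomial $g_{i_1\bar j_1}\cdots g_{i_k\bar j_k}$, arguing (via the tree/half-edge picture of Section~\ref{sectensor}) that only repeated application of the Leibniz rule to the first term contributes, producing $2\cdot3\cdots(k-1)=(k-1)!$. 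Your approach is more elementary and self-contained---it needs neither the curvature formula nor the tree calculus---while the paper's approach stays within the graphical formalism it has already set up and exploits the special geometry (constant holomorphic sectional curvature) rather than the explicit potential. Either argument is short; yours has the advantage of handling all permutations $\sigma$ at once, whereas the paper reads off one coefficient and tacitly relies on symmetry for the rest.
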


\begin{proof} The first assertion can be proved by an induction. Let
us prove \eqref{equ2}. By \eqref{eqcur1} and \eqref{equ1}, on $U$ we
have
\begin{equation}\label{equ3}
g_{i_1\bar j_1 i_2\bar j_2}=g_{i_1\bar j_1}g_{i_2\bar
j_2}+g_{i_1\bar j_2}g_{i_2\bar j_1}+g^{m\bar n}g_{m\bar j_1\bar
j_2}g_{i_1\bar n i_2}.
\end{equation}
As we have discussed in Section \ref{sectensor}, both sides of the
above equation may be represented by trees with half-edges and
taking partial derivatives may be regarded as the action of
half-edges on trees.

Let us look at the coefficients of $g_{i_1 \bar j_{1}}g_{i_2 \bar
j_{2}}\dots g_{i_k \bar j_{k}}$ after taking partial derivatives
$\partial_{i_3\bar j_3\dots i_k\bar j_k}$ to the right-hand side of
\eqref{equ3}. In fact, all contributions come from taking paired
derivatives $\partial_{i_3\bar j_3}\dots \partial_{i_k\bar j_k}$
consecutively to the first term in the right-hand side of
\eqref{equ3}. The coefficient is easily seen to be $2\cdot 3 \cdots
(k-1)=(k-1)!$.
\end{proof}

A {\it cycle decomposition} of a digraph $G$ is a partition of the
edges of $G$ into edge-disjoint cycles (i.e. closed directed paths
having no common edge). Let $\mathscr C_G$ denote the set of all
cycle decomposition of $G$. It is well known that $G$ admits a cycle
decomposition if and only if it $\deg^+(v)=\deg^-(v)$ for each
vertex $v$ of $G$.

\begin{proposition}
Given $k\geq1$, we have
\begin{equation}
\sum_{G\in\mathcal G(k)} (-)^{n(G)}\frac{\det(A-I)}{|{\rm Aut}(G)|}
\prod_{v\in V}(\deg^+(v)-1)!\sum_{H\in \mathscr C_G} N^{p(H)} =P_k,
\end{equation}
where $v$ runs over all vertices of $G$, $n(G)$ is the number of
components of $G$ and $p(H)$ is the number of cycles in the
cycle decomposition $H\in \mathscr C_G$. The graph $G$ in the
left-hand side need only run over all balanced graphs.
\end{proposition}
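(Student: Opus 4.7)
The plan is to evaluate both sides of the graph expansion $a_k(x)=\sum_{G\in\mathcal G(k)} z(G)\cdot G$ at the origin of the unit ball $U\subset\mathbb C^N$ equipped with its Bergman metric, and to match the result against the explicit value $a_k(0)=P_k$ recorded just above the proposition.

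For the weights $z(G)$ I would invoke Theorem \ref{main}. Any $G$ that has a connected component which fails to be strongly connected contributes zero by parts (i)--(ii), so the sum collapses to graphs whose connected components $G_1,\dots,G_{n(G)}$ are each strongly connected. Combining parts (i) and (iii) with the multiplicativity $\det(A-I)=\prod_j\det(A_j-I)$ and the wreath-product identity $|{\rm Aut}(G)|=\prod_j|{\rm Aut}(G_j)|\cdot|Sym(G_1,\dots,G_{n(G)})|$ yields
$$z(G)=(-1)^{n(G)}\frac{\det(A-I)}{|{\rm Aut}(G)|}.$$

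For $G(0)$ I would apply Lemma \ref{bergball} one vertex at a time. A vertex with $\deg^+(v)\neq\deg^-(v)$ has vanishing derivative tensor at the origin, which automatically truncates the sum to balanced graphs; at a balanced vertex with common degree $k$ the lemma contributes $(k-1)!$ times the sum over the $k!$ bijections $\sigma_v$ between its outgoing and incoming half-edges. Since the edge contractions reduce to $g^{i\bar j}(0)=\delta_{ij}$, a choice of local pairings $\{\sigma_v\}_{v\in V}$ glues all indices into a family of closed directed walks partitioning $E(G)$, that is, a cycle decomposition $H\in\mathscr C_G$, and each resulting cycle contributes a single free index summed from $1$ to $N$. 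Consequently
$$G(0)=\prod_{v\in V}(\deg^+(v)-1)!\sum_{H\in\mathscr C_G}N^{p(H)},$$
and this also vanishes for unbalanced $G$ because $\mathscr C_G=\emptyset$ in that case, thereby justifying the parenthetical remark at the end of the statement.

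Substituting both closed forms into $P_k=\sum_G z(G)\,G(0)$ produces exactly the claimed identity. The one delicate point is the correspondence in the previous paragraph between systems of local pairings $\{\sigma_v\}$ and genuine cycle decompositions: I must verify that each such system gives a unique decomposition $H\in\mathscr C_G$ and that each cycle of $H$ contracts to exactly one factor of $N$, so that the weight agrees with $N^{p(H)}$ on the nose. This is the main bookkeeping obstacle, but it is a standard feature of vertex transition systems on balanced digraphs.
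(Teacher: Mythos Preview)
Your proposal is correct and follows essentially the same route as the paper: evaluate the graph expansion of $a_k$ at the origin of the unit ball via Lemma~\ref{bergball}, and rewrite $z(G)$ using Theorem~\ref{main}. The paper's proof is terser and simply asserts the intermediate identity $\sum_{G}z(G)\prod_v(\deg^+(v)-1)!\sum_{H}N^{p(H)}=P_k$, whereas you spell out why the vertex-by-vertex application of Lemma~\ref{bergball} produces the sum over cycle decompositions; your observation that the local pairings $\{\sigma_v\}$ form a transition system in bijection with $\mathscr C_G$ is exactly the bookkeeping the paper leaves implicit, and it goes through without difficulty.
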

\begin{proof} From $a_k=P_k$ and Lemma \ref{bergball}, we have
\begin{equation}
\sum_{G\in\mathcal G(k)} z(G)\prod_{v\in V}(\deg^+(v)-1)!\sum_{H\in
\mathscr C_G} N^{p(H)} =P_k.
\end{equation}
Note that if $G\in\mathcal G$ is a disjoint union of connected
subgraphs $G=G_1\cup\dots\cup G_n$, then by Theorem \ref{main} we
have
\begin{equation}
z(G)=\begin{cases}\frac{(-1)^n\det(A-I)}{|{\rm Aut}(G)|}, & \text{if all }
G_i \text{ are strongly connected};
\\
0,& \text{otherwise}.
\end{cases}
\end{equation}
So we get the desired formula.
\end{proof}

\begin{corollary}
The leading term of the polynomial $P_k$ is $ \frac{(-1)^k}{2^k
k!}N^{2k} $.
\end{corollary}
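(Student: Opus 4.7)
The plan is to extract the leading $N^{2k}$ term directly from the graphical formula in the preceding proposition,
$$
P_k = \sum_{G\in\mathcal G(k)} (-1)^{n(G)}\frac{\det(A-I)}{|{\rm Aut}(G)|} \prod_{v\in V}(\deg^+(v)-1)!\sum_{H\in \mathscr C_G} N^{p(H)},
$$
by locating the unique graph whose cycle decomposition maximises $p(H)$. Since $\mathscr C_G=\emptyset$ unless $G$ is balanced, only balanced stable graphs contribute. For such $G$, stability gives $\deg^+(v)=\deg^-(v)\ge 2$, so $|E|=\sum_v \deg^+(v)\ge 2|V|$; combined with $|E|=|V|+k$ this forces $|V|\le k$ and therefore $|E|\le 2k$.

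Now for any $H\in\mathscr C_G$ we clearly have $p(H)\le |E|\le 2k$, so the contribution of $G$ to $P_k$ is a polynomial in $N$ of degree at most $2k$. I then analyse when equality $p(H)=2k$ is achieved: it requires $|V|=k$, $|E|=2k$ (so every vertex has in- and out-degree exactly $2$), and every cycle in $H$ of length one (a self-loop), meaning every edge of $G$ is a self-loop. Thus $G$ must be the disjoint union of $k$ copies of the graph $V_2$ consisting of a single vertex with two self-loops, and $\mathscr C_G$ consists of exactly one decomposition with $p(H)=2k$.

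Finally, I plug this distinguished $G_\ast = k\cdot V_2$ into the formula: $n(G_\ast)=k$, the adjacency matrix of each $V_2$-block is $[2]$ so $\det(A-I)=1$, $|{\rm Aut}(G_\ast)|=2^k\cdot k!$ (swap loops at each vertex, permute the $k$ vertices), $\prod_v(\deg^+(v)-1)!=1$, and $\sum_{H\in\mathscr C_{G_\ast}}N^{p(H)}=N^{2k}$. The resulting contribution is $(-1)^k N^{2k}/(2^k k!)$, while every other balanced stable $G\in\mathcal G(k)$ contributes $O(N^{2k-1})$, giving the claimed leading term. The only part that really needs care is the uniqueness step: one must check that any balanced stable graph on $k$ vertices with $2k$ edges that is not $k\cdot V_2$ either contains a cycle of length $\ge 2$ in every decomposition (so $p(H)\le 2k-1$), or has $|V|<k$ (so $|E|<2k$); both cases are straightforward from $|E|\ge 2|V|$ and the structure of cycle decompositions.
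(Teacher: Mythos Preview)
Your argument is essentially the same as the paper's: identify $G_\ast=k\cdot V_2$ as the only graph contributing to the top degree $N^{2k}$ and read off its coefficient. The paper records this in one line by invoking Proposition~\ref{disjoint} and Proposition~\ref{onevertex} (so $z(G_\ast)=z(V_2)^k/k!=(-1/2)^k/k!$), whereas you compute the same number via the determinant formula of the preceding proposition; these are equivalent by Theorem~\ref{main}.

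There is one small slip. For $G_\ast$ the set $\mathscr C_{G_\ast}$ does \emph{not} consist of a single decomposition: at each vertex the two self-loops can also be merged into a length-$2$ closed walk, so in fact $\sum_{H\in\mathscr C_{G_\ast}}N^{p(H)}=(N^2+N)^k$, not $N^{2k}$. (Check $k=1$: $g_{i\bar i j\bar j}(0)=N^2+N$ by Lemma~\ref{bergball}, consistent with $P_1=-\tfrac{N(N+1)}{2}$.) What you actually need, and what your degree-counting argument correctly establishes, is that among all balanced $G\in\mathcal G(k)$ and all $H\in\mathscr C_G$, the pair achieving $p(H)=2k$ is unique and occurs at $G=G_\ast$. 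So replace the equality $\sum_{H}N^{p(H)}=N^{2k}$ by the statement that the coefficient of $N^{2k}$ in this sum is $1$, and the proof goes through.
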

\begin{proof}
It follows from Proposition \ref{disjoint} and Proposition
\ref{onevertex}.
\end{proof}

Let $G$ be a digraph. An {\it Euler tour} is a directed closed path
in $G$ which visits each edge exactly once. As a generalization of
the ``Seven Bridges of K\"onigsberg'' problem, Euler showed that $G$
has an Euler tour if and only if $G$ is connected and
$\deg^+(v)=\deg^-(v)$ at every vertex $v$. We denote by
$\epsilon(G)$ the number of Euler tours in $G$ starting with a fixed edge.

\begin{corollary} \label{bern}
Given $k\geq1$, we have
\begin{equation} \label{eqeuler}
\sum_{G\in\mathcal G(k)} z(G)\cdot\epsilon(G)\cdot\prod_{v\in
V}(\deg^+(v)-1)!
 =\frac{(-1)^{k+1}}{k}B_k,
\end{equation}
where $B_k$ is the $k$-th Bernoulli number: $B_0=1, B_1=-1/2,
B_2=1/6, B_3=0$.
\end{corollary}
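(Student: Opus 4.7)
The plan is to specialize the preceding proposition to the Bergman metric on the unit ball of $\mathbb{C}^N$, where $a_k(x) = P_k(N)$, and then extract the coefficient of $N^1$ from both sides of the identity
\[
\sum_{G\in\mathcal G(k)} z(G)\prod_{v\in V}(\deg^+(v)-1)!\sum_{H\in \mathscr C_G} N^{p(H)} = P_k(N).
\]

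On the left-hand side, the key observation is that only cycle decompositions $H$ with $p(H)=1$ contribute to $[N^1]$. Such an $H$ is a single directed cycle traversing every edge of $G$, i.e., a cyclic Euler tour. Since every Euler tour passes through every edge, cyclic Euler tours are in bijection with Euler tours starting at a fixed edge, so the number of single-cycle decompositions of $G$ equals $\epsilon(G)$. Because $\epsilon(G) = 0$ unless $G$ is connected and balanced, I may take the sum over all of $\mathcal G(k)$ without loss and obtain
\[
[N^1]\bigl(\text{LHS}\bigr) = \sum_{G\in\mathcal G(k)} z(G)\,\epsilon(G)\prod_{v\in V}(\deg^+(v)-1)!.
\]

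For the right-hand side, I would compute $[N^1]P_k$ via the generating function $E(t,N) := \prod_{i=1}^N(1+it) = \sum_{k\geq 0} e_k(1,\dots,N)\,t^k$, so that $P_k(N) = (-1)^k[t^k]E(t,N)$. Since $E(t,0)=1$, we have $[N^1]E(t,N) = [N^1]\log E(t,N)$; expanding $\log(1+it)$ in powers of $t$ yields
\[
[N^1]\log E(t,N) = \sum_{j\geq 1}\frac{(-1)^{j-1}}{j}\,t^j\,[N^1]S_j(N),
\]
where $S_j(N) = \sum_{i=1}^N i^j$. Faulhaber's formula shows that $[N^1]S_j(N) = B_j$ for $j\geq 2$ and $[N^1]S_1(N) = 1/2$, from which a short check yields the uniform identity $[N^1]P_k = \frac{(-1)^{k+1}}{k}B_k$ (with $B_1 = -1/2$). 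Equating with the left-hand side then completes the proof. The main obstacle is the mild sign/convention bookkeeping at the anomalous index $j=1$; otherwise everything reduces to a direct coefficient extraction from the preceding proposition together with the classical Faulhaber identity.
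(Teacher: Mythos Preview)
Your proposal is correct and follows essentially the same route as the paper: both extract the coefficient of $N^1$ from the identity of the preceding proposition, interpret the $p(H)=1$ contributions on the left as Euler tours (so that their count is $\epsilon(G)$), and then compute $[N^1]P_k$ via the logarithm of the generating function $\sum_k P_k x^k$. The only difference is cosmetic: the paper quotes Barnes' asymptotic formula for $\Gamma(\beta)/\Gamma(\beta-N)$ (via Engli\v{s}) to write $\sum_k P_k x^k=\exp(\sum_j q_j x^j)$ with explicit $q_j$ and reads off $[N^1]q_k=\frac{(-1)^{k+1}}{k}B_k$, whereas you reach the same quantity by expanding $\log\prod_{i=1}^N(1+it)$ as $\sum_{j\ge1}\frac{(-1)^{j-1}}{j}t^jS_j(N)$ and invoking Faulhaber's formula for $[N^1]S_j(N)$. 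Your version is a bit more self-contained, but the two arguments are the same in substance.
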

\begin{proof} Using Barnes' asymptotic formula for $\Gamma$
functions, we have the following equality of power series (cf.
\cite{Eng})
\begin{equation}
\sum_{k=0}^\infty P_k x^k=\exp(q_1 x+q_2 x^2+\dots),
\end{equation}
where $q_1, q_2,\dots$ are polynomials in $N$,
\begin{equation}
q_j=\frac{1}{j(j+1)}\sum_{i=0}^j (-1)^{i+1}\binom{j+1}{i} B_i
N^{j+1-i}.
\end{equation}

Note the left-hand side of \eqref{eqeuler} is the coefficient of $N$
in $a_k$, which is equal to the coefficient of $N$ in $q_k$. The
latter is just $\frac{(-1)^{k+1}}{k}B_k$.
\end{proof}
It is interesting to note that the factor $\prod_{v\in
V}(\deg^+(v)-1)!$ also appears in the formula (cf. \cite[p.56]{Sta})
$$\epsilon(G)=\tau(G)\prod_{v\in
V}(\deg^+(v)-1)!,$$ where $\tau(G)$ is the number of oriented spanning
subtrees of $G$ with a fixed root. An oriented tree $T$ with a root $v$ means that
the underlying undirected graph is a tree and all arrows in $T$ point toward $v$.

Let us check \eqref{eqeuler}.
When $k=1$ and $k=2$, by \eqref{eqa1} and \eqref{eqa2}, the left-hand side of \eqref{eqeuler} respectively equals to
\begin{gather*}
-\frac{1}{2}\cdot1\cdot1=-\frac{1}{2}=B_1,\\
-\frac{1}{3}\cdot2\cdot2+\frac{1}{2}\cdot1\cdot1+\frac{3}{8}\cdot2\cdot1=-\frac{1}{12}=\frac{-B_2}{2}.
\end{gather*}
When $k=3$, by \eqref{eqa3} in Appendix \ref{apthree}, we have
\begin{gather*}
z_4\cdot1\cdot1+z_5\cdot2\cdot1+z_6\cdot1\cdot1+z_7\cdot3\cdot1+z_9\cdot2\cdot2\\
+z_{10}\cdot4\cdot2+z_{14}\cdot6\cdot6+z_{15}\cdot4\cdot1
=0=\frac{B_3}{3}.
\end{gather*}

\vskip 30pt

\appendix

\section{Computations of $a_3$} \label{apthree}

We will express $a_3$ in terms of the following basis as used by
Engli\v{s} \cite{Eng}.
\begin{gather*}
\sigma_1=\rho^3, \quad \sigma_2=\rho R_{i\bar j}R_{j\bar i}, \quad
\sigma_3=\rho R_{i\bar j k\bar l}R_{j\bar i l\bar k}, \\
\sigma_4=R_{i\bar j}R_{k\bar l} R_{j\bar i l\bar k},\quad
\sigma_5=R_{i\bar j}R_{k\bar i l\bar m}R_{j\bar k m\bar l},\quad
\sigma_6=R_{i\bar j}R_{j\bar k}R_{k\bar i},\\
\sigma_7=R_{i\bar j k\bar l}R_{j\bar i m\bar n}R_{l\bar k n\bar
m},\quad \sigma_8=\rho\Delta\rho,\quad \sigma_9=R_{i\bar j}R_{j\bar
i;k\bar k},\\
\sigma_{10}=R_{i\bar j k\bar l}R_{j\bar i l\bar k;m\bar m},\quad
\sigma_{11}=\rho_{;i}\rho_{;\bar i},\quad \sigma_{12}=R_{i\bar j;
k}R_{j\bar i;\bar k},\\
\sigma_{13}=R_{i\bar j k\bar l;m}R_{j\bar i l\bar k;\bar m},\quad
\sigma_{14}=\Delta^2 \rho,\quad \sigma_{15}=R_{i\bar j k\bar
l}R_{j\bar m l \bar n}R_{m \bar i n\bar k}.
\end{gather*}

We will compute the coefficients $c_i,\,1\leq i \leq 15$, such that
\begin{equation} \label{eqthreeinr}
a_3=c_1\sigma_1+c_2\sigma_2+\cdots+c_{15}\sigma_{15}.
\end{equation}

There are $15$ stable graphs of weight $3$ in $\mathcal G(3)$.
\begin{gather*}
\tau_1=\left[\xymatrix{*+[o][F-]{2}}\mid \xymatrix{*+[o][F-]{2}}\mid
\xymatrix{*+[o][F-]{2}}\right],\quad
\tau_2=\left[\xymatrix@C=6mm{
        *+[o][F-]{1} \ar@/^/[r]^1
         &
        *+[o][F-]{1} \ar@/^/[l]^1} \,\Big{|}\, \xymatrix{*+[o][F-]{2}}\right
        ],\quad
\tau_3=\left[\xymatrix@C=6mm{ \circ  \ar@/^/[r]^2 & \circ
\ar@/^/[l]^2} \,\Big{|}\, \xymatrix{*+[o][F-]{2}}
         \right],\\
\tau_4=\left[\begin{minipage}{0.63in}
             $\xymatrix@C=2mm@R=5mm{
                & \circ \ar@/^-0.3pc/@<0.2ex>[dl]^{1} \ar@/^0.3pc/@<0.7ex>[dr]^{1}             \\
          *+[o][F-]{1} \ar@/^0.3pc/@<0.7ex>[ur]^{1}  & &
          *+[o][F-]{1} \ar@/^-0.3pc/@<0.2ex>[ul]^{1}
         }$
         \end{minipage}\right],\quad
\tau_5=\left[\begin{minipage}{0.6in}$\xymatrix@C=2mm@R=5mm{
                & *+[o][F-]{1} \ar[dr]^{1}             \\
         \circ \ar[ur]^{1} \ar@/^0.3pc/[rr]^{1} & &  \circ  \ar@/^0.3pc/[ll]^{2}
         }$
         \end{minipage}\right],\quad
\tau_6=\left[\begin{minipage}{0.6in}$\xymatrix@C=2mm@R=5mm{
                & *+[o][F-]{1} \ar[dr]^{1}             \\
         *+[o][F-]{1} \ar[ur]^{1}  & &  *+[o][F-]{1} \ar[ll]^{1}
         }$
         \end{minipage}\right],\\
\tau_7=\left[\begin{minipage}{0.63in}
             $\xymatrix@C=2mm@R=5mm{
                & \circ \ar@/^-0.3pc/@<0.2ex>[dl]^{1} \ar@/^0.3pc/@<0.7ex>[dr]^{1}             \\
         \circ \ar@/^0.3pc/@<0.7ex>[ur]^{1} \ar@/^-0.3pc/@<0.2ex>[rr]^{1} & &  \circ
         \ar@/^0.3pc/@<0.7ex>[ll]^{1} \ar@/^-0.3pc/@<0.2ex>[ul]^{1}
         }$
         \end{minipage}\right],\quad
\tau_8=\left[\xymatrix{*+[o][F-]{2}}\mid
\xymatrix{*+[o][F-]{3}}\right],\quad
\tau_9=\left[\xymatrix{
        *+[o][F-]{1} \ar@/^/[r]^1
              &
        *+[o][F-]{2} \ar@/^/[l]^1}\right],\\
\tau_{10}=\left[\xymatrix{
        *+[o][F-]{1} \ar@/^/[r]^2
              &
        \circ \ar@/^/[l]^2}\right],\quad
\tau_{11}=\Big[\xymatrix{*+[o][F-]{2} \ar[r]^{1} &
*+[o][F-]{2}}\Big],\quad
\tau_{12}=\left[\xymatrix{
        *+[o][F-]{1} \ar@/^/[r]^1
              &
        *+[o][F-]{1} \ar@/^/[l]^2}\right],\\
\tau_{13}=\left[\xymatrix{ \circ  \ar@/^/[r]^3 & \circ
\ar@/^/[l]^2}\right],\quad
\tau_{14}=\left[\xymatrix{*+[o][F-]{4}}\right],\quad
\tau_{15}=\left[\begin{minipage}{0.6in}$\xymatrix@C=2mm@R=5mm{
                & \circ \ar[dr]^{2}             \\
         \circ \ar[ur]^{2}  & &  \circ \ar[ll]^{2}
         }$
         \end{minipage}\right].
\end{gather*}

By Proposition \ref{onevertex}, Proposition \ref{twovertex} and
Proposition \ref{disjoint}, we have
\begin{equation} \label{eqthreeing}
a_3=\sum_{i=1}^{15}z(\tau_i)\tau_i=
z_1\tau_1+z_2\tau_2+\cdots+z_{15}\tau_{15},
\end{equation}
where
\begin{gather}
z_1=-1/48,\quad z_2=-1/4,\quad
z_3=-3/16,\quad z_4=0,\quad z_5=-1,\nonumber\\
z_6=-1/3,\quad z_7=-2/3,\quad z_8=1/6,\quad
z_9=1/2,\quad z_{10}=1,\label{eqa3}\\
z_{11}=0,\quad z_{12}=1,\quad z_{13}=5/12,\quad z_{14}=-1/8,\quad
z_{15}=-7/24.\nonumber
\end{gather}

We need to express each $\tau_i$ as a linear combination of
$\sigma_i,\,1\leq i\leq 15$. By a standard computation as in Example
\ref{tyza2}, we get
\begin{gather}
\tau_i = -\sigma_i,\quad 1\leq i\leq 7,\nonumber\\
\tau_8 = -2\sigma_2-\sigma_3+\sigma_8,\quad
\tau_9 =
-\sigma_4-\sigma_5-\sigma_6+\sigma_9,\quad
\tau_{10} =
-2\sigma_5+\sigma_{10}-\sigma_{15}, \nonumber\\
\tau_{11} = \sigma_{11},\quad \tau_{12} = \sigma_{12},\quad
\tau_{13} =\sigma_{13},\label{eqsz}\\
\quad \tau_{14} =
-3\sigma_4-12\sigma_5-3\sigma_6+6\sigma_7+7\sigma_9+8\sigma_{10}
+10\sigma_{12}+3\sigma_{13}-\sigma_{14}-6\sigma_{15},\nonumber\\\nonumber
\tau_{15} = -\sigma_{15}.
\end{gather}
The only nontrivial computation is for $\tau_{14}$. By
\eqref{eqcur1}, we have
\begin{equation}
\tau_{14}=g_{i\bar i j\bar j k\bar k l\bar l}=-\partial_{k\bar k
l\bar l}R_{i\bar i j\bar j}+\partial_{k\bar k l  \bar
l}\left(g^{m\bar n}g_{m\bar i\bar j}g_{i\bar n j}\right).
\end{equation}
So $\tau_{14}$ follows from a tedious but straightforward
computation that
\begin{align*}
-\partial_{k\bar k l\bar l}R_{i\bar i j\bar
j}&=-3\sigma_4-6\sigma_5-3\sigma_6+6\sigma_7+7\sigma_9+4\sigma_{10}+8\sigma_{12}
-\sigma_{14}-4\sigma_{15},\\
\partial_{k\bar k l  \bar l}\left(g^{m\bar
n}g_{m\bar i\bar j}g_{i\bar n
j}\right)&=-6\sigma_{5}+4\sigma_{10}+2\sigma_{12}+3\sigma_{13}-2\sigma_{15}.
\end{align*}

Substituting \eqref{eqsz} into \eqref{eqthreeing}, we can get the
coefficients in \eqref{eqthreeinr}.
\begin{gather*}
c_1=1/48,\quad c_2=-1/12,\quad
c_3=1/48,\quad c_4=-1/8,\quad c_5=0,\\
c_6=5/24,\quad c_7=-1/12,\quad c_8=1/6,\quad
c_9=-3/8,\quad c_{10}=0,\\
c_{11}=0,\quad c_{12}=-1/4,\quad c_{13}=1/24,\quad c_{14}=1/8,\quad
c_{15}=1/24.
\end{gather*}
These results agree with the computations by Engli\v{s} \cite{Eng}. Note
that our convention of curvatures $R_{i\bar j k\bar l}, R_{i\bar j},
\rho$ in Section \ref{sectensor} all differ by a minus sign with
that of \cite{Eng}.

\vskip 30pt
\section{Tables of $z(G)$ for $G\in \mathcal G(4)$} \label{apfour}

By Proposition \ref{disjoint} and Corollary \ref{conn}, we need only
list $z(G)$ for the $51$ strongly connected stable graphs in the
following three tables. Note that if $G$ has $n$ vertices with the
adjacency matrix $A$, then
\begin{equation}
|{\rm Aut}(G)|=\prod_{1\leq i,j\leq n} A_{ij}!\cdot
\#\{\varphi\in S_n\mid A_{\varphi(i),\varphi(j)}=A_{ij},\,\forall 1\leq i,j\leq n\},
\end{equation}
where $S_n$ is the symmetric group of $n$ elements.

\begin{table}[h] \footnotesize
\centering %\caption{$z(G)$ of $G\in\mathcal G_{scon}(4)\cap(\mathcal G_{1,5}\cup\mathcal G_{2,6})$}
\begin{tabular}{|c|c|c|c|c|c|}

\hline $\xymatrix{*+[o][F-]{5}}$
     & $\xymatrix{
        *+[o][F-]{2} \ar@/^/[r]^2
         &
        \circ \ar@/^/[l]^2} $
     & $\xymatrix{ *+[o][F-]{1}  \ar@/^/[r]^3 & \circ \ar@/^/[l]^2 }$
     & $\xymatrix{ \circ  \ar@/^/[r]^4 & \circ \ar@/^/[l]^2}$
     & $\xymatrix{ *+[o][F-]{1}  \ar@/^/[r]^2 & \circ \ar@/^/[l]^3 }$
     & $\xymatrix{ \circ  \ar@/^/[r]^3 & \circ \ar@/^/[l]^3}$
\\
\hline $-1/30$ & $5/8$ & $1/2$ & $7/48$ & $1/2$ & $1/9$
\\
\hline $\xymatrix{
        *+[o][F-]{1} \ar@/^/[r]^1
         &
        *+[o][F-]{3} \ar@/^/[l]^1} $
     & $\xymatrix{
        *+[o][F-]{1} \ar@/^/[r]^1
         &
        *+[o][F-]{2} \ar@/^/[l]^2} $
     & $\xymatrix{ *+[o][F-]{1}  \ar@/^/[r]^1 & *+[o][F-]{1} \ar@/^/[l]^3}$
     & $\xymatrix{
        *+[o][F-]{1} \ar@/^/[r]^2
         &
        *+[o][F-]{2} \ar@/^/[l]^1} $
     & $\xymatrix{ *+[o][F-]{1}  \ar@/^/[r]^2 & *+[o][F-]{1}
       \ar@/^/[l]^2}$
     & $\xymatrix{
        *+[o][F-]{2} \ar@/^/[r]^1
         &
        *+[o][F-]{2} \ar@/^/[l]^1} $
\\
\hline $1/6$ & $1/2$ & $1/2$ & $1/2$ & $1/2$ & $0$
\\
\hline
\end{tabular}
\end{table}%%%%%%%%%%%%%%%%%%%%%%%%%%%%%%%%%%%%%%%%%%%%%%%%%%%%%%%%%%%%%%%%%%%%%%%%
\begin{table}[h]  \footnotesize
\centering %\caption{$z(G)$ of $G\in\mathcal G_{scon}(4)\cap\mathcal G_{3,7}$}
\begin{tabular}{|c|c|c|c|c|}
\hline
       \begin{minipage}{0.7in}
             $\xymatrix@C=3mm@R=6mm{
                & \circ \ar@/^-0.3pc/@<0.2ex>[dl]^{1} \ar@/^0.3pc/@<0.7ex>[dr]^{2}             \\
          *+[o][F-]{1} \ar@/^0.3pc/@<0.7ex>[ur]^{1}  & &
          \circ \ar@/^-0.3pc/@<0.2ex>[ul]^{2}
         }$
         \end{minipage}

      & \begin{minipage}{0.7in}
             $\xymatrix@C=3mm@R=6mm{
                & \circ \ar@/^-0.3pc/@<0.2ex>[dl]^{2}             \\
         \circ \ar@/^0.3pc/@<0.7ex>[ur]^{1} \ar@/^-0.3pc/@<0.2ex>[rr]^{2} & &  \circ
         \ar@/^0.3pc/@<0.7ex>[ll]^{1} \ar[ul]_{1}
         }$
         \end{minipage}
      & \begin{minipage}{0.7in}
             $\xymatrix@C=3mm@R=6mm{
                & \circ \ar@/^-0.3pc/@<0.2ex>[dl]^{2}             \\
         *+[o][F-]{1} \ar@/^0.3pc/@<0.7ex>[ur]^{1} \ar[rr]_{1} & &  *+[o][F-]{1}
          \ar[ul]_{1}
         }$
         \end{minipage}
         & \begin{minipage}{0.7in}
             $\xymatrix@C=3mm@R=6mm{
                & \circ \ar@/^-0.3pc/@<0.2ex>[dl]^{2}             \\
         \circ \ar@/^0.3pc/@<0.7ex>[ur]^{1} \ar[rr]_{2} & &  *+[o][F-]{1}
          \ar[ul]_{1}
         }$
         \end{minipage}
     &\begin{minipage}{0.7in}
             $\xymatrix@C=3mm@R=6mm{
                & \circ \ar@/^-0.3pc/@<0.2ex>[dl]^{2}             \\
         \circ \ar@/^0.3pc/@<0.7ex>[ur]^{2} \ar[rr]_{1} & &  *+[o][F-]{1}
          \ar[ul]_{1}
         }$
         \end{minipage}

\\
\hline  $-1/4$ & $-7/4$ & $-1$ & $-1$ & $-1/2$
\\
\hline
      \begin{minipage}{0.7in}
             $\xymatrix@C=3mm@R=6mm{
                & \circ \ar@/^-0.3pc/@<0.2ex>[dl]^{2}            \\
         \circ \ar@/^0.3pc/@<0.7ex>[ur]^{1} \ar@/^-0.3pc/@<0.2ex>[rr]^{1} & &  *+[o][F-]{1}
         \ar@/^0.3pc/@<0.7ex>[ll]^{1} \ar[ul]_{1}
         }$
         \end{minipage}
     & \begin{minipage}{0.7in}
             $\xymatrix@C=3mm@R=6mm{
                & \circ \ar@/^-0.3pc/@<0.2ex>[dl]^{2}            \\
         \circ \ar@/^0.3pc/@<0.7ex>[ur]^{1} \ar[rr]_{1} & &  *+[o][F-]{2}
          \ar[ul]_{1}
         }$
         \end{minipage}
      & \begin{minipage}{0.7in}$\xymatrix@C=3mm@R=6mm{
                & \circ \ar[dl]_{2}             \\
         *+[o][F-]{1} \ar[rr]_{2}  & &  \circ \ar[ul]_{2}
         }$
         \end{minipage}
     &
         \begin{minipage}{0.7in}$\xymatrix@C=3mm@R=6mm{
                & \circ \ar[dl]_{2}             \\
        \circ \ar[rr]_{3}  & &  \circ \ar[ul]_{2}
         }$
         \end{minipage}
     &  \begin{minipage}{0.7in}
             $\xymatrix@C=3mm@R=6mm{
                & \circ \ar@/^-0.3pc/@<0.2ex>[dl]^{2}             \\
         \circ \ar@/^0.3pc/@<0.7ex>[ur]^{1} \ar[rr]_{2} & &  \circ
          \ar[ul]_{2}
         }$
         \end{minipage}

\\
\hline  $-3/2$  & $-1/4$ & $-1$ & $-11/24$ & $-9/8$
\\
\hline
      \begin{minipage}{0.7in}$\xymatrix@C=3mm@R=6mm{
                & \circ \ar[dl]_{2}             \\
        *+[o][F-]{1} \ar[rr]_{1}  & &  *+[o][F-]{1} \ar[ul]_{2}
         }$
         \end{minipage}
     & \begin{minipage}{0.7in}
             $\xymatrix@C=3mm@R=6mm{
                & \circ \ar@/^-0.3pc/@<0.2ex>[dl]^{2}             \\
         \circ \ar@/^0.3pc/@<0.7ex>[ur]^{1} \ar[rr]_{1} & &  *+[o][F-]{1}
          \ar[ul]_{2}
         }$
         \end{minipage}
      & \begin{minipage}{0.7in}
             $\xymatrix@C=3mm@R=6mm{
                & \circ \ar@/^-0.3pc/@<0.2ex>[dl]^{3}             \\
         \circ \ar@/^0.3pc/@<0.7ex>[ur]^{1} \ar[rr]_{1} & &  *+[o][F-]{1}
          \ar[ul]_{1}
         }$
         \end{minipage}
     &
         \begin{minipage}{0.7in}
             $\xymatrix@C=3mm@R=6mm{
                & \circ \ar@/^-0.3pc/@<0.2ex>[dl]^{1} \ar[dr]^{1}             \\
        *+[o][F-]{1} \ar@/^0.3pc/@<0.7ex>[ur]^{2}  & &  *+[o][F-]{1}
         \ar[ll]^{1}
         }$
         \end{minipage}
     &  \begin{minipage}{0.7in}
             $\xymatrix@C=3mm@R=6mm{
                & \circ \ar@/^-0.3pc/@<0.2ex>[dl]^{1} \ar[dr]^{1}             \\
         \circ \ar@/^0.3pc/@<0.7ex>[ur]^{2} \ar@/^-0.3pc/@<0.2ex>[rr]^{1} & &  *+[o][F-]{1}
         \ar@/^0.3pc/@<0.7ex>[ll]^{1}
         }$
         \end{minipage}

\\
\hline $-1$ & $-1$ & $-1/2$ & $-1$ & $-3/2$
\\
\hline
     \begin{minipage}{0.7in}
             $\xymatrix@C=3mm@R=6mm{
                & \circ \ar@/^-0.3pc/@<0.2ex>[dl]^{1} \ar@/^0.3pc/@<0.7ex>[dr]^{1}             \\
         *+[o][F-]{1} \ar@/^0.3pc/@<0.7ex>[ur]^{1} \ar@/^-0.3pc/@<0.2ex>[rr]^{1} & &  \circ
         \ar@/^0.3pc/@<0.7ex>[ll]^{1} \ar@/^-0.3pc/@<0.2ex>[ul]^{1}
         }$
         \end{minipage}
& \begin{minipage}{0.7in}
             $\xymatrix@C=3mm@R=6mm{
                & \circ \ar@/^-0.3pc/@<0.2ex>[dl]^{1} \ar@/^0.3pc/@<0.7ex>[dr]^{1}             \\
         \circ \ar@/^0.3pc/@<0.7ex>[ur]^{1} \ar@/^-0.3pc/@<0.2ex>[rr]^{2} & &  \circ
         \ar@/^0.3pc/@<0.7ex>[ll]^{1} \ar@/^-0.3pc/@<0.2ex>[ul]^{1}
         }$
         \end{minipage}

     &  \begin{minipage}{0.7in}
             $\xymatrix@C=3mm@R=6mm{
                & \circ \ar@/^-0.3pc/@<0.2ex>[dl]^{1} \ar@/^0.3pc/@<0.7ex>[dr]^{1}             \\
          *+[o][F-]{2} \ar@/^0.3pc/@<0.7ex>[ur]^{1}  & &
           *+[o][F-]{1} \ar@/^-0.3pc/@<0.2ex>[ul]^{1}
         }$
         \end{minipage}
     & \begin{minipage}{0.7in}
             $\xymatrix@C=3mm@R=6mm{
                & \circ \ar@/^-0.3pc/@<0.2ex>[dl]^{1} \ar@/^0.3pc/@<0.7ex>[dr]^{1}             \\
         *+[o][F-]{1}\ar@/^0.3pc/@<0.7ex>[ur]^{1} \ar[rr]_{1} & &  *+[o][F-]{1}
          \ar@/^-0.3pc/@<0.2ex>[ul]^{1}
         }$
         \end{minipage}
     & \begin{minipage}{0.7in}
             $\xymatrix@C=3mm@R=6mm{
                & \circ \ar@/^-0.3pc/@<0.2ex>[dl]^{1} \ar@/^0.3pc/@<0.7ex>[dr]^{1}             \\
         *+[o][F-]{1} \ar@/^0.3pc/@<0.7ex>[ur]^{2}  & &  *+[o][F-]{1}
         \ar@/^-0.3pc/@<0.2ex>[ul]^{1}
         }$
         \end{minipage}

\\
\hline  $-2$ & $-3$ & $1/2$ & $-1$ & $0$
\\
\hline
     \begin{minipage}{0.7in}
             $\xymatrix@C=3mm@R=6mm{
                & \circ \ar@/^-0.3pc/@<0.2ex>[dl]^{2} \ar@/^0.3pc/@<0.7ex>[dr]^{1}             \\
         *+[o][F-]{1} \ar@/^0.3pc/@<0.7ex>[ur]^{1}  & &  *+[o][F-]{1}
         \ar@/^-0.3pc/@<0.2ex>[ul]^{1}
         }$
         \end{minipage}
   & \begin{minipage}{0.7in}
             $\xymatrix@C=3mm@R=6mm{
                & *+[o][F-]{1} \ar@/^-0.3pc/@<0.2ex>[dl]^{1} \ar@/^0.3pc/@<0.7ex>[dr]^{1}             \\
         *+[o][F-]{1} \ar@/^0.3pc/@<0.7ex>[ur]^{1}  & &  *+[o][F-]{1}
         \ar@/^-0.3pc/@<0.2ex>[ul]^{1}
         }$
         \end{minipage}

     &  \begin{minipage}{0.7in}$\xymatrix@C=3mm@R=6mm{
                & *+[o][F-]{1} \ar[dl]_{1}             \\
        *+[o][F-]{2} \ar[rr]_{1}  & &  *+[o][F-]{1} \ar[ul]_{1}
         }$
         \end{minipage}
     & \begin{minipage}{0.7in}$\xymatrix@C=3mm@R=6mm{
                & *+[o][F-]{1} \ar[dl]_{1}             \\
        *+[o][F-]{1} \ar[rr]_{2}  & &  *+[o][F-]{1} \ar[ul]_{1}
         }$
         \end{minipage}
     & \begin{minipage}{0.7in}
             $\xymatrix@C=3mm@R=6mm{
                & *+[o][F-]{1} \ar@/^-0.3pc/@<0.2ex>[dl]^{1}              \\
         *+[o][F-]{1}\ar@/^0.3pc/@<0.7ex>[ur]^{1} \ar[rr]_{1} & &  *+[o][F-]{1}
          \ar[ul]_{1}
         }$
         \end{minipage}

\\
\hline  $0$ & $0$ & $-1/2$ & $-1$ &$-1$
\\
\hline
\end{tabular}
\end{table}

%%%%%%%%%%%%%%%%%%%%%%%%%%%%%%%%%%%%%%%%%%%%%%%%%%%%%%%%%%%%%%%%%

\begin{table}[h] \footnotesize
\centering %\caption{$z(G)$ of $G\in\mathcal G_{scon}(4)\cap\mathcal G_{4,8}$}
\begin{tabular}{|c|c|c|c|c|}

\hline
     \begin{minipage}{0.7in} \xymatrix{
            \circ  \ar@/^0.3pc/[r]^2
                & \circ \ar@/^0.3pc/[l]^1 \ar[d]^{1}  \\
            \circ  \ar@/^0.3pc/[r]^1 \ar[u]^{1}
                & \circ \ar@/^0.3pc/[l]^2            }
                \end{minipage}
     & \begin{minipage}{0.7in} \xymatrix{
  \circ  \ar[r]^{2}
                & \circ \ar[d]^{2}  \\
  \circ \ar[u]^{2}
                & \circ  \ar[l]_{2}           }
                \end{minipage}
     & \begin{minipage}{0.7in} \xymatrix{
            \circ  \ar[d]_{1}\ar@/^0.3pc/[r]^1
                & \circ \ar@/^0.3pc/[l]^2   \\
            *+[o][F-]{1} \ar[r]_1
                & *+[o][F-]{1}    \ar[u]_{1}      }
                \end{minipage}
     &  \begin{minipage}{0.7in}\xymatrix{
  \circ \ar@/^0.3pc/[d]^{2}
                & \circ \ar[l]_{1} \ar@/^0.3pc/[d]^{1}  \\
  \circ \ar@/^0.3pc/[u]^{1} \ar@{>}[ur]_{1}
                & *+[o][F-]{1} \ar@/^0.3pc/[u]^{1}            }
                \end{minipage}
     & \begin{minipage}{0.7in}\xymatrix{
  \circ \ar[d]_{2}
                & \circ  \ar[l]_{2} \\
  \circ \ar@{>}[ur]^{1} \ar[r]_{1}
                & *+[o][F-]{1}   \ar[u]_{1}         }
                \end{minipage}
\\
\hline  $3/8$ & $15/64$ & $1$ & $-1/2$ & $1$
\\

\hline
      \begin{minipage}{0.7in}
\xymatrix{
  \circ \ar[d]_{1}\ar@/^0.6pc/[dr]|-{1}
                & \circ  \ar[l]_{2} \\
  \circ \ar@/^0.6pc/[ur]|-{1} \ar@/^0.3pc/[r]^{1}
                & \circ \ar@/^0.3pc/[l]^{1}   \ar[u]_{1}     }
\end{minipage}
     & \begin{minipage}{0.7in}
\xymatrix{
  *+[o][F-]{1} \ar[r]^{1}
                & \circ   \ar@{>}[dl]_{2}\\
  \circ  \ar[r]_{1}  \ar[u]^{1}
                & *+[o][F-]{1}    \ar[u]_{1}          }
\end{minipage}
     & \begin{minipage}{0.7in} \xymatrix{
            \circ  \ar@/^0.3pc/[r]^1 \ar@/^0.3pc/[d]^{1}
                & \circ \ar@/^0.3pc/[l]^1 \ar@/^0.3pc/[d]^{1}  \\
            \circ  \ar@/^0.3pc/[r]^1 \ar@/^0.3pc/[u]^{1}
                & \circ \ar@/^0.3pc/[l]^1 \ar@/^0.3pc/[u]^{1}           }
                \end{minipage}
     & \begin{minipage}{0.7in} \xymatrix{
            \circ  \ar@/^0.3pc/[r]^1 \ar@/^0.3pc/[d]^{1}
                & \circ \ar@/^0.3pc/[l]^1 \ar@/^0.3pc/[d]^{1}  \\
            *+[o][F-]{1}  \ar@/^0.3pc/[u]^{1}
                & *+[o][F-]{1} \ar@/^0.3pc/[u]^{1}           }
                \end{minipage}
     &  \begin{minipage}{0.7in}
\xymatrix{
  \circ \ar@/^0.3pc/[d]^{1} \ar@/^0.3pc/[r]^{1}
                & \circ \ar[d]^{1} \ar@/^0.3pc/[l]^1 \\
  \circ \ar@/_0.3pc/[ur]_{1} \ar@/^0.3pc/[u]^{1}
                & *+[o][F-]{1}    \ar[l]^{1}         }
 \end{minipage}
\\
\hline  $2$ & $0$ & $3/8$ & $-1/2$ & $2$
\\

\hline
     \begin{minipage}{0.7in}
\xymatrix{
  \circ \ar@/^-0.3pc/@<-0.1ex>[d]^{1} \ar@/_0.6pc/[dr]|-{1}
                & \circ  \ar[l]_{1} \ar@/^0.3pc/[d]^{1}\\
  \circ \ar@/^0.6pc/[ur]|-{1} \ar@/^0.3pc/@<0.9ex>[u]^{1}
                & \circ  \ar@/^0.3pc/[u]^{1} \ar[l]^{1}          }
\end{minipage}
     & \begin{minipage}{0.7in}
\xymatrix{
  *+[o][F-]{1}\ar[d]_{1}
                & \circ  \ar[l]_{1} \ar@/^0.3pc/@{>}[dl]^{1}\\
  \circ \ar@/^0.3pc/@{>}[ur]^{1} \ar[r]_{1}
                & *+[o][F-]{1}   \ar[u]_{1}         }
\end{minipage}
     & \begin{minipage}{0.7in}
\xymatrix{
  \circ \ar@/^0.3pc/[d]^{1} \ar@{>}[dr]^{1}
                & *+[o][F-]{1} \ar[l]_1 \\
 *+[o][F-]{1} \ar@/^0.3pc/[u]^{1}
                & *+[o][F-]{1}     \ar[u]_{1}       }
 \end{minipage}
     & \begin{minipage}{0.7in}\xymatrix{
  *+[o][F-]{1} \ar[d]_{1}
                & *+[o][F-]{1}  \ar[l]_{1} \\
  *+[o][F-]{1}  \ar[r]_{1}
                & *+[o][F-]{1}   \ar[u]_{1}         }
                \end{minipage}
     &
\\
\hline  $5/4$ & $1/2$ & $0$ & $1/4$ &
\\
\hline
\end{tabular}
\end{table}

$$ \ \ \ \ $$

\

\end{document}